\sloppy\pagestyle{plain}
\newcommand{\mumu}{\boldsymbol{\mu}}
\theoremstyle{definition}
\newtheorem{example}[equation]{Example}
\newtheorem*{example*}{Example}
\newtheorem{theorem}[equation]{Theorem}
\newtheorem{lemma}[equation]{Lemma}
\newtheorem{corollary}[equation]{Corollary}
\newtheorem{proposition}[equation]{Proposition}
\newtheorem*{conjecture*}{Conjecture}
\newtheorem*{question*}{Question}
\newtheorem{problem}[equation]{Problem}
\newtheorem*{problem*}{Problem}
\newtheorem*{theorem*}{Theorem}
\theoremstyle{remark}
\newtheorem{remark}[equation]{Remark}
\newtheorem*{remark*}{Remark}
\makeatletter\@addtoreset{equation}{section} \makeatother
\author{Ivan Cheltsov and Constantin Shramov}
\title{K\"ahler--Einstein Fano threefolds of degree $22$}
\address{\emph{Ivan Cheltsov}
\newline
\textnormal{School of Mathematics, The University of Edinburgh,  Edinburgh EH9 3JZ, UK.}
\newline
\textnormal{\texttt{I.Cheltsov@ed.ac.uk}}}
\address{\emph{Constantin Shramov}
\newline
\textnormal{Steklov Mathematical Institute of RAS,
8 Gubkina street, Moscow 119991, Russia.}
\newline
\textnormal{National Research University Higher School of Economics, Laboratory of Algebraic Geometry, NRU HSE, 6 Usacheva str., Moscow, 117312, Russia.}
\newline
\textnormal{\texttt{costya.shramov@gmail.com}}}
\begin{document}

\begin{abstract}
We study the problem of existence of K\"ahler--Einstein metrics on smooth Fano threefolds of Picard rank one and anticanonical degree $22$ that admit a faithful action of the multiplicative group $\mathbb{C}^\ast$.
We prove that, with a possible exception of two explicitly described cases, all such smooth Fano threefolds are K\"ahler--Einstein.
\end{abstract}

\maketitle

All varieties are assumed to be projective and defined over the field of complex numbers.

\section{Introduction}
\label{section:intro}

Smooth Fano threefolds of Picard rank $1$ have been classified by Iskovskikh in \cite{Iskovskikh1977,Iskovskikh1978}.
Among them, he found a family missing in the original works by Fano.
Threefolds in this family have the same cohomology groups as $\mathbb{P}^3$ does.
Their anticanonical degree is~$22$, hence they are called threefolds of type~$V_{22}$.
In fact, Iskovskikh himself missed one threefold in this family, which was later recovered by Mukai and Umemura in~\cite{MukaiUmemura}.
This threefold, usually called the Mukai--Umemura threefold,
is an equivariant compactification of $\mathrm{SL}_2(\mathbb{C})\slash\mathbf{I}$, where $\mathbf{I}$ denotes the icosahedral group.
Its automorphism group is isomorphic to the group $\mathrm{PGL}_2(\mathbb{C})$.

The automorphism groups of threefolds of type $V_{22}$ have been studied by Prokhorov in \cite{Prokhorov}.
He proved that this group is finite except for a unique threefold
for which the connected component of identity of the automorphism group is isomorphic to  the additive group $\mathbb{C}^{+}$;
and a one-parameter family of threefolds that admit a faithful action of the multiplicative group $\mathbb{C}^\ast$,
which includes the Mukai--Umemura threefold as a special member.
We refer to the latter varieties as threefolds of type $V_{22}^\ast$.

In \cite{Tian1997}, Tian showed that there are threefolds of type $V_{22}$ with trivial automorphism group
that do not admit K\"ahler--Einstein metrics, which disproved a folklore conjecture that all smooth Fano varieties
without holomorphic vector fields are K\"ahler--Einstein.
On the other hand, Donaldson proved

\begin{theorem}[{\cite[Theorem~3]{Donaldson}}]
\label{theorem:Donaldson}
Let $X$ be the Mukai--Umemura threefold, and $G$ be its automorphism group. Then
$$
\alpha_G\big(X\big)=\frac{5}{6}.
$$
\end{theorem}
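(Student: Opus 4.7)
The plan is to apply Demailly's identification
$$
\alpha_G(X) = \inf\Bigl\{\mathrm{lct}(X, D) : 0 \leq D \sim_{\mathbb Q} -K_X,\ D \text{ is } G\text{-invariant}\Bigr\},
$$
and prove $\alpha_G(X) \leq 5/6$ by exhibiting an explicit $G$-invariant divisor, and $\alpha_G(X) \geq 5/6$ by a case analysis on the non-klt locus of an arbitrary $G$-invariant $\mathbb{Q}$-divisor in the anticanonical class.

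For the upper bound, the realization of $X$ as an equivariant compactification of $\mathrm{SL}_2(\mathbb C)/\mathbf I$ makes the $G$-orbit structure explicit: there is the open orbit, a two-dimensional orbit whose closure is a $G$-invariant surface $S$, and a unique closed orbit $C\cong G/B\cong\mathbb P^1$ contained in $S$. Using the Clebsch--Gordan decomposition of $H^0(X,-K_X)$ as an $\mathrm{SL}_2$-representation, one verifies that $S$ is the unique $G$-invariant element of $|-K_X|$ and that its transverse singularity at a general point of $C$ is cuspidal of type $y^2 = x^3$. A weighted $(2,3)$-blow-up of $C$ then shows $\mathrm{lct}(X,S) = \tfrac{1}{2}+\tfrac{1}{3} = \tfrac{5}{6}$, giving $\alpha_G(X)\leq 5/6$.

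For the lower bound, suppose there were a $G$-invariant effective $\mathbb Q$-divisor $D\sim_{\mathbb Q} -K_X$ for which $(X,\lambda D)$ fails to be log canonical at some rational $\lambda<5/6$. The non-klt locus of such a pair is $G$-invariant, so the orbit classification forces any minimal non-klt center to be $C$ or $S$, since $X$ admits no $G$-fixed points. Decompose $D = cS + D'$ where $c\in[0,1]$ and $D'$ is effective with no component along $S$. One then bounds $\mathrm{mult}_C D'$ by intersecting $D'$ with a general curve in a $G$-invariant family sweeping out $X$, and combines this with the precise cuspidal profile of $S$ along $C$ via inversion of adjunction on a log resolution centered at $C$ to derive a contradiction.

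The main obstacle will be the sharp multiplicity estimate along the closed curve $C$. Because $-K_X\cdot C$ is very small compared with the anticanonical degree $22$ of $X$, naive intersection bounds on $\mathrm{mult}_C D'$ are weak, and a refined input is required: for instance, restricting to a $\mathbb C^\ast\subset G$-invariant pencil of surfaces through $C$ and reducing to a two-dimensional lct computation, or applying a curve version of Corti's multiplicity inequality along $C$. The sharpness of the constant $5/6$ will then emerge from the exact match between the cuspidal discrepancy of $S$ and these intersection-theoretic bounds, which is precisely what distinguishes the Mukai--Umemura threefold among all members of the family $V_{22}^\ast$.
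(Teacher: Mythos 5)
The paper does not actually prove this statement: Theorem~\ref{theorem:Donaldson} is quoted verbatim from Donaldson \cite{Donaldson}, so your proposal can only be measured against Donaldson's argument. Your upper bound is essentially his: the boundary surface $S$ of the open $\mathrm{PGL}_2(\mathbb{C})$-orbit is a $G$-invariant anticanonical divisor with a cuspidal edge (transversally $y^2=x^3$) along the closed orbit $C\cong G/B$, so $\mathrm{lct}(X,S)=\tfrac12+\tfrac13=\tfrac56$; homogeneity of $C$ and $G$-invariance of $\mathrm{Sing}(S)$ make this the global threshold, and plugging the one-element invariant system $\{S\}$ into \eqref{equation:Demailly} gives $\alpha_G(X)\leqslant\tfrac56$. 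That part is correct.

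The gap is in the lower bound, and it is twofold. First, the difficulty you flag as the ``main obstacle'' (a sharp bound on $\mathrm{mult}_C D'$ after writing $D=cS+D'$) is self-inflicted and is not actually resolved in your sketch: with $-K_X\cdot C$ small relative to degree $22$, the naive intersection bounds you mention are indeed too weak, and no concrete substitute (Corti-type inequality, pencil restriction) is carried out, so as written you only prove the inequality $\alpha_G(X)\leqslant\tfrac56$. Second, and more importantly, you miss the structural fact that makes Donaldson's proof short: $G\cong\mathrm{PGL}_2(\mathbb{C})$ acts on the Mukai--Umemura threefold with a dense open orbit, so every $G$-invariant closed subvariety is one of the orbit closures $X\supset S\supset C$; in particular $S$ is the \emph{only} $G$-invariant prime divisor. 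Since $\mathrm{Pic}(X)=\mathbb{Z}[-K_X]$ and $S\in|-K_X|$, any $G$-invariant effective $\mathbb{Q}$-divisor $D\sim_{\mathbb{Q}}-K_X$ is equal to $S$, so in your decomposition $D'=0$ and the entire lower bound for divisors reduces to the lct computation you already did for the upper bound. The only genuinely remaining content, if one works with the linear-system formulation \eqref{equation:Demailly}, is to control mobile $G$-invariant subsystems of $|-nK_X|$: their base loci are $G$-invariant of codimension at least two, hence (as there are no $G$-fixed points) contained in $C$, and it is exactly there that a multiplicity or inversion-of-adjunction argument along $C$ is needed; your proposal conflates this step with the divisor case and does not supply it. Compare how the present paper handles the analogous reduction for $V_u$ in \S\ref{section:proof} (Remarks~\ref{remark:KLT-G-invariant-center}--\ref{remark:convexity} and Proposition~\ref{proposition:KLT-RNC}), where the group is much smaller and the classification of invariant curves does the work that the dense orbit does for the Mukai--Umemura threefold.
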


Here $\alpha_G(X)$ is the $G$-equivariant $\alpha$-invariant defined by Tian in \cite{Tian1987}.
If $X$ is a smooth Fano variety, and $G$ is a reductive subgroup in $\mathrm{Aut}(X)$,
then Demailly's \cite[Theorem~A.3]{CheltsovShramovUMN} gives
\begin{equation}
\label{equation:Demailly}
\alpha_G(X)=\mathrm{sup}\left\{\epsilon\in\mathbb{Q}\ \left|\ \aligned
&\text{the log pair}\ \left(X, \frac{\epsilon}{n}\mathcal{D}\right)\ \text{is log canonical for any}\ n\in\mathbb{Z}_{>0}\\
&\text{and every $G$-invariant linear system}\ \mathcal{D}\subset\big|-nK_{X}\big|\\
\endaligned\right.\right\}.
\end{equation}

Donaldson's Theorem~\ref{theorem:Donaldson} implies the existence of a K\"ahler--Einstein metric on the Mukai--Umemura threefold by famous Tian's criterion:

\begin{theorem}[{\cite{Tian1987}}]
\label{theorem:Tian}
Let $X$ be a smooth Fano variety of dimension $n$, and $G$ be a reductive subgroup in $\mathrm{Aut}(X)$.
Suppose that
$$
\alpha_G\big(X\big)>\frac{n}{n+1}.
$$
Then $X$ admits a K\"ahler--Einstein metric.
\end{theorem}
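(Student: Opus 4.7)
The plan is to apply the continuity method to the family of complex Monge--Amp\`ere equations
$$
\big(\omega_0 + dd^c \phi_t\big)^n = e^{h_0 - t\phi_t}\,\omega_0^n,\qquad t\in[0,1],
$$
where $\omega_0$ is a fixed $G$-invariant K\"ahler form in the class $c_1(X)$ and $h_0$ is the Ricci potential of $\omega_0$, normalized so that $\int_X(e^{h_0}-1)\,\omega_0^n=0$. Solvability at $t=1$ produces a K\"ahler--Einstein metric $\omega_0 + dd^c\phi_1$.

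First I would show that the set $S\subset [0,1]$ of parameters $t$ for which the equation is solvable is open, closed, and contains $0$. Non-emptiness at $t=0$ is Yau's solution of the Calabi conjecture, and openness for $t>0$ follows from the implicit function theorem, once one observes that the linearization $-\Delta_{\omega_t}-t$ is invertible thanks to positivity of the first non-trivial eigenvalue of $-\Delta_{\omega_t}$. Since $G$ is reductive and $\omega_0$ is $G$-invariant, uniqueness of the solution at each $t$ allows us to impose $G$-invariance throughout the continuity path and to work with $G$-invariant potentials $\phi_t$.

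For closedness, standard higher-order a priori estimates due to Yau and Aubin reduce the task to a uniform $C^0$ bound on $\phi_t$. After normalizing $\sup_X \phi_t=0$, one wishes to extract from the Monge--Amp\`ere equation an inequality of the form
$$
\int_X e^{-\alpha \phi_t}\,\omega_0^n \leq C_\alpha
$$
for every $\alpha<\alpha_G(X)$. This is the step that invokes Demailly's formula~\eqref{equation:Demailly}: since $\omega_0 + dd^c\phi_t$ is a $G$-invariant positive current in the class $-K_X$, the associated log pair is log canonical with coefficient any $\alpha<\alpha_G(X)$, and this translates via Skoda's integrability theorem into the integral bound above.

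The main obstacle, and the reason the threshold $\tfrac{n}{n+1}$ is sharp, is converting this integrability bound into an actual pointwise lower bound on $\phi_t$. The standard route is a Green's function comparison of the shape
$$
\inf_X \phi_t \geq -C - \frac{1}{\mathrm{vol}(X)}\int_X(-\phi_t)\,\omega_0^n,
$$
combined with a Moser iteration whose exponent is controlled by $\alpha$. The iteration closes precisely when $\alpha>\tfrac{n}{n+1}$, so choosing $\alpha\in\big(\tfrac{n}{n+1},\alpha_G(X)\big)$ and tracking the constants carefully yields a uniform $C^0$ bound on $\phi_t$, and hence closedness of $S$. This completes the continuity method and produces the desired K\"ahler--Einstein metric on $X$.
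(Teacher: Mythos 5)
First, a remark on what you are being compared against: the paper does not prove Theorem~\ref{theorem:Tian} at all --- it is quoted from \cite{Tian1987}, in the form valid for a reductive group $G$ via Demailly's identification \eqref{equation:Demailly} --- so your sketch has to stand on its own as a rendering of the classical continuity-method argument. Its overall architecture (Aubin's path, openness by the implicit function theorem, closedness reduced to a uniform $C^0$ bound, the $\alpha$-invariant entering through exponential integrability) is the right one, but the very first step contains a genuine gap. You fix ``a $G$-invariant K\"ahler form $\omega_0$ in $c_1(X)$'' and propagate $G$-invariance along the path. For a non-compact reductive $G$ --- and the case this paper needs is precisely $G\cong\mathbb{C}^\ast\rtimes\mumu_2$ --- no such $\omega_0$ exists: a $G$-invariant K\"ahler metric would make $G$ act by isometries, while the isometry group of a compact Riemannian manifold is compact. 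The correct use of the reductivity hypothesis is different: choose a maximal compact subgroup $K\subset G$, average to get a $K$-invariant $\omega_0$, and run Tian's estimate with the analytic invariant over $K$-invariant potentials; since $K$ is Zariski dense in the reductive group $G$, every $K$-invariant linear subsystem of $|-nK_X|$ is automatically $G$-invariant, so the algebraic invariant computed by \eqref{equation:Demailly} satisfies $\alpha_K(X)=\alpha_G(X)$ and the hypothesis $\alpha_G(X)>\tfrac{n}{n+1}$ can be fed into the $K$-invariant integrability bound. Without this reduction, your appeal to ``$G$ reductive'' does no actual work, and your use of \eqref{equation:Demailly} for the potentials $\varphi_t$ (which can only be taken $K$-invariant, not $G$-invariant) is not justified as written.

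Second, the heart of the proof is compressed into a claim that is at best misattributed: that ``a Moser iteration whose exponent is controlled by $\alpha$ closes precisely when $\alpha>\tfrac{n}{n+1}$.'' The Moser iteration (or Green's function) step only converts integral quantities into the oscillation bound and is insensitive to $\alpha$; a direct combination of the bound $\int_X e^{-\beta(\varphi_t-\sup\varphi_t)}\omega_0^n\leqslant C_\beta$ (for $\beta<\alpha$) with the Monge--Amp\`ere equation via H\"older/Jensen controls the solutions only for parameters $t\leqslant\beta<\alpha<1$, and the whole difficulty is to propagate the estimate up to $t=1$. In Tian's argument this is done through the Aubin functionals, using $J\leqslant\tfrac{n}{n+1}I$ (equivalently $I\leqslant(n+1)(I-J)$) together with a Harnack-type inequality of the shape $-\inf\varphi_t\leqslant n\sup\varphi_t+C$ along the path; this is exactly where the sharp threshold $\tfrac{n}{n+1}$ comes from. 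Your sketch never explains this propagation, and ``tracking the constants carefully'' hides the one step that is actually hard. (Similarly, openness at $t\in(0,1)$ needs $\lambda_1(-\Delta_{\omega_t})>t$, which follows from $\mathrm{Ric}(\omega_t)\geqslant t\,\omega_t$ and Bochner, not mere positivity of $\lambda_1$.) To make this a proof you would need to supply the functional inequalities and the propagation argument as in \cite{Tian1987}, or cite them, as the paper does.
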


An example of a K\"ahler--Einstein threefold of type $V_{22}$ with finite automorphism group has been constructed in \cite{CheltsovShramovKlein}.
On the other hand, there exist threefolds of this type that are not K\"ahler--Einstein.

\begin{example}\label{example:Xu}
Let $X^{\mathrm{a}}$ be the unique threefold of type $V_{22}$
such that the connected component of identity of its automorphism group is isomorphic to  the additive group $\mathbb{C}^{+}$.
By the Matsushima obstruction, the variety $X^{\mathrm{a}}$ is not K\"ahler--Einstein.
It is interesting to point out that $X^{\mathrm{a}}$ is K-semistable. Indeed, it follows from \cite[Proposition~5.4.4]{KuznetsovProkhorovShramov} and the Mukai construction of varieties of type $V_{22}$ (cf. \cite[Remark~5.4.8]{KuznetsovProkhorovShramov})  that
the Mukai--Umemura threefold is a degeneration of $X^{\mathrm{a}}$.
Since the Mukai--Umemura threefold is K\"ahler--Einstein, it is K-polystable by \cite{ChenDonaldsonSun}, so that in particular it is K-semistable.
On the other hand, K-semistability is an open condition, see \cite[Theorem~1.4]{Xu} or~\mbox{\cite[Corollary~1.2]{BlumLiuXu}}. Hence~$X^{\mathrm{a}}$ is K-semistable.
\end{example}

The problem of existence of K\"ahler--Einstein metrics on threefolds of type $V_{22}^\ast$
was addressed by Donaldson in \cite{Donaldson,Donaldson2017}, by Rollin, Simanca and Tipler in \cite{RollinSimancaTipler},
and by Dinew, Kapustka and Kapustka in \cite{DinewKapustkaKapustka}.
In particular, they proved that the set of such threefolds that are K\"ahler--Einstein is open in moduli in the Euclidean topology.
Donaldson suggested that in fact all threefolds of type $V_{22}^\ast$ are K\"ahler--Einstein.
In \cite{Donaldson}, he wrote
\begin{displayquote}
The Mukai--Umemura manifold has $\tau=1$.
When $\tau$ is close to $1$ we have seen that the corresponding manifold admits a K\"ahler--Einstein metric.
It seems likely that this true for all $\tau$ but, as far as the author is aware, this is not known.
It seems an interesting test case for future developments in the existence theory.
\end{displayquote}
Here $\tau$ is a parameter in the moduli space of threefolds of type $V_{22}^\ast$ that is used in \cite{Donaldson}.
The Mukai--Umemura threefold corresponds to $\tau=1$.

In \cite[\S4.1]{Donaldson2017}, Donaldson made a more precise suggestion about which threefolds of type $V_{22}$ are K\"ahler--Einstein metric and which are not.
It also predicts that each threefold of type $V_{22}^\ast$ must admit a K\"ahler--Einstein metric.

To verify Donaldson's suggestion,
Dinew, Kapustka and Kapustka estimated the \mbox{$\alpha_{\mathbb{C}^\ast}$-invariants} of threefolds of type $V_{22}^\ast$.
It appeared that they do not exceed $\frac{1}{2}$,
so that Tian's Theorem~\ref{theorem:Tian} cannot be applied.
However, the automorphism groups of all threefolds of type~$V_{22}^\ast$ are actually larger than $\mathbb{C}^\ast$.
It was pointed out in \cite{RollinSimancaTipler,DinewKapustkaKapustka} that there exists an additional involution
that anti-commutes with the $\mathbb{C}^\ast$-action, so that together they generate a subgroup isomorphic to $\mathbb{C}^\ast\rtimes\mumu_2$.
Here $\mumu_2$ denotes the group of order $2$.
In fact, by \cite[Theorem~1.3]{KuznetsovProkhorov}, one has
$$
\mathrm{Aut}\big(X\big)\cong\mathbb{C}^\ast\rtimes\mumu_2
$$
for every threefold $X$ of type $V_{22}^\ast$ that is not the Mukai--Umemura threefold.

Dinew, Kapustka and Kapustka posed

\begin{problem}[{\cite[Problem~7.1]{DinewKapustkaKapustka}}]
\label{problem:DinewKapustkaKapustka}
Let $X$ be a smooth Fano threefold of type $V_{22}^{\ast}$, and let $G$ be a subgroup in $\mathrm{Aut}(X)$
that is isomorphic to $\mathbb{C}^\ast\rtimes\mumu_2$.
Compute $\alpha_G(X)$.
\end{problem}

In this paper we completely solve this problem using the description of smooth Fano threefolds of type  $V_{22}^\ast$
obtained recently by Kuznetsov and Prokhorov in \cite{KuznetsovProkhorov}.

Kuznetsov and Prokhorov proved that the isomorphisms classes of Fano threefolds of type $V_{22}^\ast$ are naturally parameterized by $u\in\mathbb{C}\setminus\{0,1\}$.
In~\S\ref{section:explicit}, we present their construction in details.
Note that the parameter $u$ used by Kuznetsov and Prokhorov in \cite{KuznetsovProkhorov} differs from the parameter $\tau$ used by Donaldson in \cite{Donaldson}.

To state our main result, we denote by $V_u$ the smooth Fano threefold of type  $V_{22}^\ast$
that corresponds to the parameter $u$ in the construction of \cite{KuznetsovProkhorov}.
Then the Mukai--Umemura threefold is $V_u$ for $u=-\frac{1}{4}$ by \cite[Theorem~1.3]{KuznetsovProkhorov}.
Let $G$ a subgroup in $\mathrm{Aut}(V_u)$ such that
$$
G\cong\mathbb{C}^\ast\rtimes\mumu_2.
$$
The main result of our paper is

\begin{theorem}
\label{theorem:main}
One has
$$
\alpha_G(V_u)=
\left\{\aligned
&\frac{4}{5}\ \ \text{if}\ u\ne\frac{3}{4}\ \text{and}\ u\ne 2,\\
&\frac{3}{4}\ \ \text{if}\ u=\frac{3}{4},\\
&\frac{2}{3}\ \ \text{if}\ u=2.\\
\endaligned
\right.
$$
\end{theorem}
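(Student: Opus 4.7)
The plan is to compute $\alpha_G(V_u)$ by verifying the sharp version of Demailly's formula \eqref{equation:Demailly}: for every $G$-invariant effective $\mathbb{Q}$-divisor $D\sim_{\mathbb{Q}}-K_{V_u}$ one must establish $\mathrm{lct}(V_u,D)\geq\alpha$ for the claimed value $\alpha$, and one must exhibit a single such $D$ attaining it. The Kuznetsov--Prokhorov construction from Section~\ref{section:explicit} makes the $G$-action explicit, so the first step is to classify all $G$-invariant irreducible curves on $V_u$ and all $G$-invariant prime divisors of small anticanonical degree. Because $\mathbb{C}^\ast$ has only finitely many isolated fixed points and at most a one-dimensional fixed locus on a Fano threefold, and the involution in $\mumu_2$ normalizes the $\mathbb{C}^\ast$-action, this classification is finite: one expects a few isolated $G$-fixed points, a short list of $\mathbb{C}^\ast$-weight curves together with their $\mumu_2$-translates, and a handful of $G$-invariant prime divisors in $|-nK_{V_u}|$ for small~$n$.

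For the upper bound I would, for each $u$, write down an explicit $G$-invariant effective $\mathbb{Q}$-divisor $D_u\sim_{\mathbb{Q}}-K_{V_u}$ whose worst singularity has local log canonical threshold equal to the claimed $\alpha$. For $u\notin\{\tfrac{3}{4},2\}$ I expect $D_u$ to be (a multiple of) a specific invariant anticanonical surface with a cuspidal or similar singularity at a $G$-fixed point giving threshold $\tfrac{4}{5}$, yielding $\alpha_G(V_u)\leq\tfrac{4}{5}$. At the two exceptional parameters some invariant anticanonical section is expected to degenerate further, either becoming reducible or acquiring a worse singularity along an invariant curve, which should produce the sharper upper bounds $\tfrac{3}{4}$ and $\tfrac{2}{3}$; the explicit equations recalled from \cite{KuznetsovProkhorov} should pin down precisely the values $u=\tfrac{3}{4}$ and $u=2$ as those where the relevant degeneration occurs.

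For the matching lower bound I would argue by contradiction. Assume $(V_u,cD)$ is not log canonical for some $G$-invariant $D\sim_{\mathbb{Q}}-K_{V_u}$ and some $c$ strictly below the claimed value. The non-log-canonical locus is $G$-invariant and, by Shokurov--Koll\'ar connectedness, connected, so any minimal log canonical center $Z$ is a $G$-invariant subvariety of the classification list. If $\dim Z=0$, then standard multiplicity inequalities for non-LC pairs on a smooth threefold, combined with an intersection estimate $D\cdot C\leq -K_{V_u}\cdot C$ for a $G$-invariant curve $C$ through $Z$, yield a numerical contradiction. If $\dim Z=1$, then restricting $D$ to a suitable auxiliary $G$-invariant surface and applying adjunction reduces matters to an LCT computation on that surface, controlled by the intersection matrix of the $G$-invariant divisors on it.

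The main obstacle will be the one-dimensional case together with the two exceptional parameter values. The $\mathbb{C}^\ast$-invariant rational curves have low anticanonical degree, so the intersection inequalities are tight, and the contradiction at generic $u$ will rely on precise information about which invariant surface to restrict to, how a general invariant $D$ decomposes, and the exact intersection numbers on that surface. Carrying out the same tight analysis at $u=\tfrac{3}{4}$ and $u=2$ is where the inequalities degenerate to equalities and force the sharper bounds $\tfrac{3}{4}$ and $\tfrac{2}{3}$, respectively. The hardest part will be establishing uniformly over the family $\{V_u\}_{u\in\mathbb{C}\setminus\{0,1\}}$ that no further degeneration occurs at any other value of $u$, so that the answer is genuinely $\tfrac{4}{5}$ on the complement of $\{\tfrac{3}{4},2\}$; this should come out of a universal computation of the log canonical threshold of $D_u$ as an explicit function of $u$.
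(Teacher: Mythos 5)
Your overall architecture (classify $G$-invariant centers, exhibit extremal divisors for the upper bound, use connectedness and adjunction-type arguments for the lower bound) is the right shape, but two of your concrete choices would fail. First, the upper bound cannot come from ``a singularity at a $G$-fixed point'': the threefold $V_u$ has \emph{no} $G$-fixed points at all (this is Lemma~\ref{lemma:points}, and it is essential later). The actual extremal objects are singular along $G$-invariant \emph{curves}: the bound $\alpha_G(V_u)\leqslant\frac{4}{5}$ for all $u$ comes from $\frac{1}{2}\mathcal{R}$, where $\mathcal{R}\sim-2K_{V_u}$ has multiplicity $5$ along the conic $\mathcal{C}_2$ (Corollary~\ref{corollary:4-5}), while the exceptional values arise because the anticanonical members $T_{15}^{\prime}$ and $T_{15}^{\prime\prime}$ of the invariant pencil acquire, respectively, an ordinary triple point along $\mathcal{C}_4$ exactly at $u=2$ and a tacnode along $\mathcal{C}_6$ exactly at $u=\frac{3}{4}$ (Lemmas~\ref{lemma:T-15-prime} and \ref{lemma:T-15-prime-prime}). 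Relatedly, your zero-dimensional case is vacuous for the same reason: after the perturbation trick and connectedness one shows the minimal center is $G$-invariant, and then Lemma~\ref{lemma:points} rules out points outright; no multiplicity estimate at a fixed point is available or needed.

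Second, and more seriously, your one-dimensional case is under-specified precisely where the proof is hard. To reduce to finitely many curves one needs (i) the explicit classification showing the only irreducible $G$-invariant curves of degree $<12$ are $\mathcal{C}_2,\mathcal{C}_4,\mathcal{C}_6,\mathcal{C}_{10},\mathcal{C}_{10}^{\prime}$ (Proposition~\ref{proposition:curves}, which occupies two sections of computation), and (ii) Kawamata subadjunction applied to the perturbed pair to bound the degree of the center by $\frac{2}{1-\varepsilon(u)}\leqslant 10$, which is what eliminates the degree-$10$ and higher curves and leaves only $\mathcal{C}_2,\mathcal{C}_4,\mathcal{C}_6$ (Proposition~\ref{proposition:KLT-RNC}); ``invariant curves have low degree'' is not true without this, since invariant curves of degree $10$ and $12$ do exist. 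Finally, for the center $\mathcal{C}_4$ the kind of local argument you propose (restrict to an auxiliary invariant surface and compute intersection numbers) does not close: nefness of $\sigma^*(H_{V_u})-E_\sigma$ on the blow-up only gives $\mathrm{mult}_{\mathcal{C}_4}(D)\leqslant\frac{7}{4}$, which is compatible with non-log-canonicity at threshold $\frac{4}{5}$. The paper's resolution is global and self-referential: it runs the Sarkisov link \eqref{equation:Cutrone} from the blow-up of $\mathcal{C}_4$ to \emph{another} threefold $V_{u^{\prime}}$ of the same family, transports the pair across the flop, and applies the already-established statement at general points of $\mathcal{C}_2$ and $\mathcal{C}_6$ on $V_{u^{\prime}}$ (Corollary~\ref{corollary:final} combined with Lemma~\ref{lemma:lct-C4}). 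Without this bootstrapping step, or a genuine substitute for it, the lower bound at $\mathcal{C}_4$ --- and hence the theorem --- is not proved.
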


Applying Tian's Theorem~\ref{theorem:Tian}, we obtain

\begin{corollary}
\label{corollary:main}
If $u\ne\frac{3}{4}$ and $u\ne 2$, then $V_u$ is K\"ahler--Einstein.
\end{corollary}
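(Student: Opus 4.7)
The plan is to derive this corollary as an immediate consequence of Theorem~\ref{theorem:main} combined with Tian's criterion (Theorem~\ref{theorem:Tian}), with no substantive work beyond checking that the hypotheses of Tian's theorem are met.

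First I would observe that the group $G\cong\mathbb{C}^\ast\rtimes\mumu_2$ is reductive: its identity component is the torus $\mathbb{C}^\ast$, and its unipotent radical is trivial (there are no non-trivial unipotent elements in either factor of the semidirect product). Thus $G$ is an admissible subgroup of $\mathrm{Aut}(V_u)$ for the purposes of Theorem~\ref{theorem:Tian}.

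Next, for parameter values $u\ne\frac{3}{4}$ and $u\ne 2$, Theorem~\ref{theorem:main} gives
$$
\alpha_G(V_u)=\frac{4}{5}.
$$
Since $V_u$ is a smooth Fano threefold we have $n=3$, so Tian's threshold is $\frac{n}{n+1}=\frac{3}{4}$. The strict inequality $\frac{4}{5}>\frac{3}{4}$ is then precisely the hypothesis of Theorem~\ref{theorem:Tian}, and applying that theorem to $X=V_u$ with the above choice of $G$ yields a K\"ahler--Einstein metric on $V_u$.

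There is no real obstacle to overcome at this stage: all the content lies in the computation of the \mbox{$\alpha_G$-invariant} carried out in Theorem~\ref{theorem:main}. It is worth noting in passing why the two excluded parameter values must be excluded here: Theorem~\ref{theorem:main} gives $\alpha_G(V_{3/4})=\frac{3}{4}$ and $\alpha_G(V_2)=\frac{2}{3}$, and neither of these is \emph{strictly} greater than $\frac{3}{4}$, so Tian's criterion yields no information for $V_{3/4}$ and $V_2$. This accounts for the two cases left open in the abstract.
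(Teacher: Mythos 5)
Your proposal is correct and coincides with the paper's own (one-line) argument: the corollary is deduced by applying Tian's Theorem~\ref{theorem:Tian} to the reductive group $G\cong\mathbb{C}^\ast\rtimes\mumu_2$, using the value $\alpha_G(V_u)=\frac{4}{5}>\frac{3}{4}$ from Theorem~\ref{theorem:main}. Your added remarks on reductivity of $G$ and on why $u=\frac{3}{4}$ and $u=2$ are excluded are accurate but not needed beyond what the paper states.
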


\begin{remark}
\label{remark:Kento}
If $u=\frac{3}{4}$ or $u=2$, then $V_u$ is also K\"ahler--Einstein.
This has been recently proved by Fujita in \cite{Fujita2021}.
Note also that Theorem~\ref{theorem:main} and \cite[Theorem~1.4.10]{ACCFKMGSSV}
imply that $V_u$ is K\"ahler--Einstein for $u=\frac{3}{4}$.
\end{remark}

Let us describe the scheme of the proof of Theorem~\ref{theorem:main}.
To estimate $\alpha_G(V_u)$, one has to describe irreducible $G$-invariant subvarieties of small degree in $V_u$.
Since $G$ acts on $V_u$ without fixed points, we have to deal with irreducible $G$-invariant curves of small degree,
and $G$-invariant anticanonical surfaces in $V_u$. However, the geometry of the~threefold $V_u$ is rather complicated,
and it is hard to complete these tasks in a straightforward way. Instead, we use a construction of the threefold $V_u$
as a $G$-equivariant birational image of a smooth quadric hypersurface in $\mathbb{P}^4$ found
recently by Kuznetsov and Prokhorov in \cite{KuznetsovProkhorov}, see the diagram \eqref{equation:V22} for more details.
This allows to describe irreducible $G$-invariant curves of small degree in $V_u$
and $G$-invariant surfaces in $|-K_{V_u}|$ in terms of the quadric, whose $G$-equivariant geometry is much easier to control.
In particular, this description gives us an upper bound on $\alpha_G(V_u)$.
To show that the latter bound is sharp, we have to study $G$-equivariant birational geometry of the threefold $V_u$.
We do this using three explicit $G$-equivariant Sarkisov links that start from $V_u$.
As a result, we obtain the formula for $\alpha_G(V_u)$ in Theorem~\ref{theorem:main}.

Let us describe the structure of this paper.
In \S\ref{section:explicit}, we recall from \cite{KuznetsovProkhorov} the explicit
construction of the threefold $V_u$ using a birational map from a three-dimensional quadric.
In this section, we also describe this birational map explicitly in coordinates.
In \S\ref{section:curves}, we start an explicit classification of
irreducible $G$-invariant curves of small degree in the threefold $V_u$,
which will be used in the proof of Theorem~\ref{theorem:main}.
In \S\ref{section:curves-2}, we complete this classification, see Proposition~\ref{proposition:curves}.
In \S\ref{section:pencil}, we study the pencil in the linear system $|-K_{V_u}|$ that consists of all $G$-invariant surfaces
and describe singularities of surfaces in this pencil.
In \S\ref{section:link}, we describe one Sarkisov link that plays a crucial role in the proof of Theorem~\ref{theorem:main}.
In this section, we also describe two special birational transformations of the threefold~$V_u$,
which are known as \emph{bad Sarkisov links}. They are also used in the proof of our Theorem~\ref{theorem:main}.
Finally, in \S\ref{section:proof}, we prove Theorem~\ref{theorem:main}.

\bigskip

\textbf{Acknowledgements.}
The authors are very grateful to Hamid Abban, Sir Simon Donaldson, Kento Fujita, Alexander Kuznetsov,
Yuri Prokhorov, Cristiano Spotti, and Chenyang Xu for useful discussions.
The work of Constantin Shramov was performed at the Steklov
International Mathematical Center and supported by the Ministry of
Science and Higher Education of the Russian Federation (agreement no.~\mbox{075-15-2019-1614}).
He was also supported by the Russian Academic Excellence Project~\mbox{``5-100''}
and the Young Russian Mathematics award.

\section{Kuznetsov--Prokhorov construction}
\label{section:explicit}

Consider the projective space $\mathbb{P}^4$ with homogeneous coordinates $x$, $y$, $z$, $t$, and $w$.
Suppose that the group $\mathbb{C}^\ast$ act on $\mathbb{P}^4$ by
\begin{equation}
\label{equation:action}
\lambda\colon (x:y:z:t:w)\mapsto (x:\lambda y:\lambda^3 z:\lambda^5 t:\lambda^6 w).
\end{equation}
Furthermore, consider the involution $\iota$ acting on $\mathbb{P}^4$ by
\begin{equation}\label{equation:involution}
\iota\colon (x:y:z:t:w)\mapsto (w:t:z:y:x).
\end{equation}
This defines the action of the group $G\cong\mathbb{C}^\ast\rtimes\mumu_2$ on $\mathbb{P}^4$.

Let the quadric $Q_u$, where $u\in\mathbb{C}$, be given by equation
\begin{equation}\label{equation:quadric}
u(xw-z^2)+(z^2-yt)=0.
\end{equation}
Then the quadric $Q_u$ is $G$-invariant.
Note that $Q_u$ is smooth provided that $u\not\in\{0,1\}$.
Therefore, until the end of the paper (with the only exception of Remark~\ref{remark:V1} below),
we will always assume that neither $u=0$ nor $u=1$.

Let $\Gamma$ be the image of $\mathbb{P}^1$ with homogeneous coordinates $(s_0:s_1)$
embedded into $\mathbb{P}^4$ by
$$
(s_0:s_1)\mapsto (s_0^6:s_0^5s_1:s_0^3s_1^3:s_0s_1^5:s_1^6).
$$
Then $\Gamma$ is a $G$-invariant curve contained in the quadric $Q_u$.
It is the closure of the $G$-orbit of the point $(1:1:1:1:1)$.
One easily checks that $\mathrm{deg}(\Gamma)=6$,
cf. Lemma~\ref{lemma:parameterization} below.

Let $\mathcal{S}$ be the complete intersection in $\mathbb{P}^4$ that is given by
$$
\left\{\aligned
&xw-z^2=0,\\
&z^2-yt=0.
\endaligned
\right.
$$
Then the surface $\mathcal{S}$ is $G$-invariant,
and $\Gamma\subset\mathcal{S}\subset Q_u$.

\begin{remark}
\label{remark:dP4}
The surface $\mathcal{S}$ is a toric singular del Pezzo surface of degree $4$ that has $4$ ordinary double points.
These points are $(1:0:0:0:0)$, $(0:0:0:0:1)$, $(0:1:0:0:0)$ and $(0:0:0:1:0)$.
The first two of them  are contained in the curve $\Gamma$.
\end{remark}

It was proved in \cite[Theorem~2.2]{KuznetsovProkhorov} (cf.~\cite[(2.13.2)]{Takeuchi})
that there exists the following $G$-equivariant commutative diagram
\begin{equation}
\label{equation:V22}
\xymatrix{
&\widetilde{Q}_u\ar@{-->}[rr]^{\chi}\ar@{->}[ldd]_{\pi}\ar@{->}[rd]^{\alpha}&& \widetilde{V}_u\ar@{->}[ld]_{\beta}\ar@{->}[rdd]^{\phi}&\\%
&&Y_{u}&&\\
Q_u\ar@{-->}[rrrr]^{\zeta}\ar@{-->}[rru]^{\gamma}&&&&V_u\ar@{-->}[llu]_{\omega}}
\end{equation} %
Here $V_u$ is a smooth Fano threefold of type $V_{22}^\ast$,
the morphism $\pi$ is the blow up of the quadric $Q_u$ along the curve $\Gamma$,
the morphism $\phi$ is the blow up of the threefold $V_u$ along a (unique) $G$-invariant smooth rational curve $\mathcal{C}_2$ with $-K_{V_u}\cdot \mathcal{C}_2=2$,
the map $\chi$ is a flop in two smooth rational curves, which we will describe later in Remark~\ref{remark:flop}.
The morphisms $\alpha$ and $\beta$ in \eqref{equation:V22} are small birational morphisms
that are given by the linear systems $|-nK_{\widetilde{Q}_u}|$
and $|-nK_{\widetilde{V}_u}|$ for $n\gg 0$, respectively.
By construction, the threefold~$Y_{u}$ is a non-$\mathbb{Q}$-factorial Fano threefold with terminal singularities such that $-K_{Y_{u}}^3=16$.

\begin{remark}
\label{remark:V22-classification}
Kuznetsov and Prokhorov showed in \cite{KuznetsovProkhorov} that every smooth Fano threefold of type $V_{22}^\ast$
can be obtained via diagram~\eqref{equation:V22} for some $u\in\mathbb{C}\setminus\{0,1\}$.
Moreover, they proved that for distinct $u$ the resulting varieties $V_u$ are not isomorphic.
Furthermore, if~\mbox{$u=-\frac{1}{4}$}, then $V_u$ is the Mukai--Umemura threefold by \cite[Theorem~1.3]{KuznetsovProkhorov}.
For other descriptions of threefolds of type $V_{22}^\ast$, see \cite[\S5.3]{Donaldson},
\mbox{\cite[\S2.2]{DinewKapustkaKapustka}} and \cite[\S5.3]{KuznetsovProkhorovShramov}.
\end{remark}

Recall from \cite[Proposition~4.1.11]{IskovskikhProkhorov} that the divisor $-K_{V_u}$ is very ample,
and the linear system $|-K_{V_u}|$ gives an embedding $V_u\hookrightarrow\mathbb{P}^{13}$.
In particular, the curve $\mathcal{C}_2$ is a conic in this embedding.
Let us identify  $V_u$ with its anticalonical image in $\mathbb{P}^{13}$ and fix the following notation.
\begin{itemize}
\item We denote by $H_{Q_{u}}$ a hyperplane section of the quadric $Q_u$ in $\mathbb{P}^4$.
\item We denote by $H_{V_{u}}$ a hyperplane section of the threefold $V_u$ in $\mathbb{P}^{13}$.
\item We denote by $\widetilde{\mathcal{S}}$ the proper transform of the surface $\mathcal{S}$ on the threefold $\widetilde{Q}_u$.
\item We denote by $E_{Q_{u}}$ the exceptional surface of the blow up $\pi$.
\item We denote by $E_{V_{u}}$ the exceptional surface of the blow up $\phi$.
\end{itemize}
Then $\widetilde{\mathcal{S}}$ is the proper transform of $E_{V_{u}}$ on $\widetilde{Q}_u$,
which is the unique divisor in the linear system $|2\pi^*(H_{Q_{u}})-E_{Q_{u}}|$.
Similarly, the proper transform of $E_{Q_{u}}$ on $\widetilde{V}_u$ is the unique surface in the linear system $|2\phi^*(H_{V_u})-5E_{V_{u}}|$.
Thus, we also fix the following notation.
\begin{itemize}
\item We denote by $\widetilde{\mathcal{R}}$ the unique surface in the linear system $|2\phi^*(H_{V_u})-5E_{V_{u}}|$.
\item We denote by $\mathcal{R}$ the proper transform of the surface $\widetilde{\mathcal{R}}$ on the threefold $V_u$.
\end{itemize}

\begin{corollary}
\label{corollary:4-5}
One has $\alpha_G(V_u)\leqslant\frac{4}{5}$.
\end{corollary}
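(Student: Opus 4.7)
The plan is to apply the Demailly-style formula \eqref{equation:Demailly} to the explicit $G$-invariant divisor $\mathcal{R}$. First I would verify that $\mathcal{R}$ is $G$-invariant: by the description immediately preceding the corollary, $\widetilde{\mathcal{R}}$ is the \emph{unique} element of the linear system $|2\phi^{*}(H_{V_u})-5E_{V_u}|$ on $\widetilde{V}_u$, and both $\phi$ and $E_{V_u}$ are $G$-equivariant (since $\mathcal{C}_2$ is $G$-invariant). Hence $\widetilde{\mathcal{R}}$ is $G$-invariant, and so is its image $\mathcal{R}=\phi(\widetilde{\mathcal{R}})$ on $V_u$. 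Thus the $0$-dimensional linear system $\mathcal{D}=\{\mathcal{R}\}$ is $G$-invariant, and since $\mathcal{R}\sim -2K_{V_u}$ we may use it in the formula \eqref{equation:Demailly} with $n=2$.

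Next I would extract the multiplicity of $\mathcal{R}$ along $\mathcal{C}_2$. Because $\phi$ is the blow up of the smooth conic $\mathcal{C}_2\subset V_u$ with exceptional divisor $E_{V_u}$, and $\widetilde{\mathcal{R}}\sim 2\phi^{*}H_{V_u}-5E_{V_u}=\phi^{*}\mathcal{R}-5E_{V_u}$, the integer $5$ is precisely the coefficient of $E_{V_u}$ in $\phi^{*}\mathcal{R}-\widetilde{\mathcal{R}}$, i.e. $\mathrm{mult}_{\mathcal{C}_2}(\mathcal{R})=5$.

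I would then apply the standard log-canonical bound \cite[Proposition~9.5.13]{Lazarsfeld}: for a smooth curve $C$ in a smooth threefold $X$ and an effective divisor $D$ with $\mathrm{mult}_C(D)=m$, the log pair $(X,tD)$ fails to be log canonical at a general point of $C$ as soon as $tm>2$, because the discrepancy of the blow up of $C$ along $X$ equals $1$. Applied to $C=\mathcal{C}_2$, $X=V_u$, $D=\mathcal{R}$, $m=5$, this shows that $\bigl(V_u,\tfrac{\epsilon}{2}\mathcal{R}\bigr)$ is log canonical only if $\tfrac{\epsilon}{2}\cdot 5\leqslant 2$, i.e. $\epsilon\leqslant\tfrac{4}{5}$. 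Feeding the linear system $\mathcal{D}=\{\mathcal{R}\}$ (with $n=2$) into \eqref{equation:Demailly} therefore yields $\alpha_G(V_u)\leqslant\tfrac{4}{5}$.

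There is no real obstacle here: once $\mathcal{R}$ has been identified as a $G$-invariant effective divisor in $|-2K_{V_u}|$ with multiplicity $5$ along the smooth $G$-invariant curve $\mathcal{C}_2$, the corollary is immediate from Lazarsfeld's multiplicity estimate and the definition of $\alpha_G$. The substantive work of the paper lies in proving the matching lower bound $\alpha_G(V_u)\geqslant\tfrac{4}{5}$ in the generic case, which is taken up in the subsequent sections; this corollary merely records the easy half of Theorem~\ref{theorem:main}.
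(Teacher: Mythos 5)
Your proposal is correct and follows essentially the same route as the paper: the paper's proof is exactly the observation that $\mathcal{R}\sim -2K_{V_u}$ with $\mathrm{mult}_{\mathcal{C}_2}(\mathcal{R})=5$, combined with \cite[Proposition~9.5.13]{Lazarsfeld} and the formula \eqref{equation:Demailly}. Your additional verifications (the $G$-invariance of $\mathcal{R}$ via uniqueness in $|2\phi^*(H_{V_u})-5E_{V_u}|$, and reading off the multiplicity from the coefficient of $E_{V_u}$) are accurate elaborations of what the paper leaves implicit.
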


\begin{proof}
Let $D=\frac{1}{2}\mathcal{R}$. Then $D\sim_{\mathbb{Q}}-K_{V_u}$.
Moreover, since $\mathcal{R}\sim -2K_{V_{u}}$ and $\mathrm{mult}_{\mathcal{C}_2}(\mathcal{R})=5$,
the log pair $(V_u,\frac{4}{5}D)$ is not Kawamata log terminal. Indeed, we have
$$
K_{\widetilde{V}_u}+\frac{4}{5}\widetilde{D}+E_{V_{u}}\sim_{\mathbb{Q}}\phi^*\Big(K_{V_u}+\frac{4}{5}D\Big).
$$
This shows that $\alpha_G(V_u)\leqslant\frac{4}{5}$.
\end{proof}

Using the information about the classes of the exceptional divisors $E_{Q_{u}}$ and $E_{V_{u}}$, one can easily check
that the rational map
$\phi\circ\chi\colon\widetilde{Q}_u\dasharrow V_u$ is given by the linear system~\mbox{$|5\pi^*(H_{Q_{u}})-2E_{Q_{u}}|$},
and the rational map~\mbox{$\pi\circ\chi^{-1}\colon\widetilde{V}_u\dasharrow Q_u$} is given by the linear
system~\mbox{$|\phi^*(H_{V_u})-2E_{V_{u}}|$}.

\begin{remark}
\label{remark:Y16}
By \cite[Proposition~4.1.12(iii)]{IskovskikhProkhorov}, the threefold $V_u$ is a scheme-theoretic intersection of quadrics in $\mathbb{P}^{13}$.
Thus, since $-K_{\widetilde{V}_u}\sim \phi^*(H_{V_u})-E_{V_{u}}$ and~\mbox{$h^0(\mathcal{O}_{\widetilde{V}_u}(-K_{\widetilde{V}_u}))=11$},
the linear system $|-K_{\widetilde{V}_u}|$ gives a morphism  $V_{u}\to\mathbb{P}^{10}$ that is birational on its image.
Hence, there is a commutative diagram
$$
\xymatrix{
&\widetilde{V}_u\ar@{->}[ld]\ar@{->}[rd]^{\phi}&\\%
\mathbb{P}^{10}&&V_u\ar@{-->}[ll]}
$$
such that the dashed arrow is a linear projection from the conic $\mathcal{C}_2$.
This implies that we can assume that the morphism $\beta$ in~\eqref{equation:V22}
is given by the linear system $|-K_{\widetilde{V}_u}|$.
Hence, we can also assume that the morphism $\alpha$ is given by the linear system $|-K_{\widetilde{Q}_{u}}|$.
Thus, the threefold $Y_u$ is a (singular) Fano threefold anticanonically embedded into $\mathbb{P}^{10}$.
\end{remark}

Let $L_{1}$ and $L_{2}$ be the tangent lines in $\mathbb{P}^4$ to the curve $\Gamma$ at the points  $(1:0:0:0:0)$ and $(0:0:0:0:1)$, respectively.
Then $L_{1}$ is given by
\begin{equation}
\label{equation:L1}
z=t=w=0,
\end{equation}
and the line $L_{2}$ is given by
\begin{equation}
\label{equation:L2}
x=y=z=0.
\end{equation}
Thus, both lines $L_{1}$ and $L_{2}$ are contained in the surface $\mathcal{S}$.
Denote by $\widetilde{L}_{1}$ and $\widetilde{L}_{2}$ the proper transforms of the lines $L_{1}$ and $L_{2}$ on the threefold $\widetilde{Q}_u$, respectively.

\begin{remark}
\label{remark:flop}
By \cite[Remark~5.3]{KuznetsovProkhorov}, the curves $\widetilde{L}_{1}$ and $\widetilde{L}_{2}$ are the flopping curves of the map $\chi$.
The flopping curves of $\chi^{-1}$ are described in \cite[Proposition~5.1]{KuznetsovProkhorov}.
Namely, the threefold $V_u$ contains exactly two lines that intersect the conic $\mathcal{C}_2$.
Denote them~by $\ell_1$ and~$\ell_2$, and denote their proper transforms on $\widetilde{V}_u$ by $\widetilde{\ell}_1$ and $\widetilde{\ell}_2$, respectively.
The lines~$\ell_1$ and~$\ell_2$ intersect the conic $\mathcal{C}_2$ transversally, because $V_u$ is an intersection of quadrics.
Moreover, the lines $\ell_1$ and $\ell_2$ are contained in the surface $\mathcal{R}$,
since $\mathcal{R}\sim -2K_{V_{u}}$
and~\mbox{$\mathrm{mult}_{\mathcal{C}_2}(\mathcal{R})=5$}.
By~\mbox{\cite[Remark~5.3]{KuznetsovProkhorov}}, the curves $\widetilde{\ell}_1$ and $\widetilde{\ell}_2$ are exactly the flopping curves of the map~$\chi^{-1}$.
Thus, the birational map $\zeta$ in \eqref{equation:V22} induces an isomorphism
$$
Q_v\setminus\mathcal{S}\cong V_u\setminus\mathcal{R}.
$$
Without loss of generality, we may assume that
$\beta(\widetilde{\ell}_1)=\alpha(\widetilde{L}_{1})$ and $\beta(\widetilde{\ell}_2)=\alpha(\widetilde{L}_{2})$.
Note that the lines $\ell_1$ and $\ell_2$ on the Fano threefold $V_u$ are \emph{special}, i.e., their normal bundles in $V_u$
are isomorphic to $\mathcal{O}_{\mathbb{P}^1}(1)\oplus\mathcal{O}_{\mathbb{P}^1}(-2)$; see the proof of~~\mbox{\cite[Proposition~5.1]{KuznetsovProkhorov}}.
This implies that the normal bundles of the curves $\widetilde{\ell}_1$ and $\widetilde{\ell}_2$ in $\widetilde{V}_u$
are isomorphic to $\mathcal{O}_{\mathbb{P}^1}\oplus\mathcal{O}_{\mathbb{P}^1}(-2)$,
so that the flop $\chi^{-1}$ is given by Reid's pagoda \cite[\S5]{Reid}.
\end{remark}

\begin{remark}
\label{remark:V1}
It follows from Theorem~\ref{theorem:main} and Remark~\ref{remark:Kento} that $V_u$ is K-polystable for every $u\not\in\{0,1\}$.
It would be interesting to find the K-polystable limits of the threefolds $V_u$ when $u\to 0$, $u\to 1$ and $u\to \infty$.
In fact, we have a candidate for the limit in the case when $u\to 1$.
Namely, if $u=1$, then the quadric threefold $Q_u$ is singular at the point~\mbox{$(0:0:1:0:0)$}.
This point is not contained in the surface $\mathcal{S}$,
and it is not contained in the curve $\Gamma$.
Thus, the commutative diagram \eqref{equation:V22} still makes sense in this case.
The threefold $V_1$ is a Fano threefold with one ordinary double point such that~\mbox{$-K_{V_1}^3=22$}.
By \cite[Proposition~5.4]{KuznetsovProkhorov}, one has $\mathrm{Pic}(V_1)\cong\mathbb{Z}$ and $\mathrm{Cl}(V_1)\cong\mathbb{Z}^2$,
so that $V_1$ is one of the threefolds described in~\mbox{\cite[Theorem~1.2]{Prokhorov2016}}.
Note also that $\mathrm{Cl}(V_1)^G\cong\mathbb{Z}^2$.
We expect that $V_1$ is K-polystable, so that it is the K-polystable limit of our threefolds $V_u$ when $u\to 1$.
\end{remark}

The commutative diagram~\eqref{equation:V22} is a Sarkisov link (that starts at $Q_u$ and ends at $V_u$).
It plays a crucial role in the proof of our Theorem~\ref{theorem:main}.
In \S\ref{section:link}, we describe another $G$-equivariant Sarkisov link that starts at $V_u$ and ends at another
threefold of type $V_{22}^\ast$ (possibly isomorphic to~$V_u$).
This link also helps to prove Theorem~\ref{theorem:main}.

\begin{remark}[{cf. \cite{CheltsovShramovKlein,CheltsovShramovA6,CheltsovShramovQuartic,CheltsovShramovBOOK,CheltsovShramovP3}}]
\label{remark:G-link}
It would be interesting to study other $G$-Sarkisov links that start at the threefold $V_u$ or the quadric $Q_u$.
Such links usually arise from $G$-irreducible curves of small degree or $G$-orbits of small length.
For example, the inverse of the link~\eqref{equation:V22} arises from the conic $\mathcal{C}_2$, which is irreducible and $G$-invariant.
The curve $\ell_1+\ell_2$ from Remark~\ref{remark:flop} also gives rise to a $G$-Sarkisov link.
Namely, one can show that there exists a $G$-equivariant commutative diagram
\begin{equation}
\label{equation:G-link}
\xymatrix{
&\overline{V}_u\ar@{-->}[rr]^{\varrho}\ar@{->}[ld]_{\upsilon}\ar@{->}[rd]^{\varsigma}&&\overline{W}\ar@{->}[ld]_{\varphi}\ar@{->}[rd]^{\nu}&\\%
V_u&&U&&W}
\end{equation}
Here $\upsilon$ is a blow up of the lines $\ell_1$ and $\ell_2$,
the morphisms $\varsigma$ and $\varphi$ are small and birational,
the map $\varrho$ flops the curves contracted by $\varsigma$,
the threefold $U$ is a Fano threefold with terminal singularities such that $-K_U^3=14$,
the threefold $W$ is a smooth Fano threefold such that $\mathrm{Pic}(W)\cong\mathbb{Z}^2$ and $-K_W^3=28$,
and $\nu$ is a birational morphism that contracts the proper transform of the unique surface in
$|-K_{V_u}|$ which is singular along the lines $\ell_1$ and~$\ell_2$ to a smooth rational curve of (anticanonical) degree $6$.
Note that $\mathrm{Pic}(W)^G\cong\mathbb{Z}$, and $W$ is the threefold No.~(1.2.3) in \cite[Theorem~1.2]{Prokhorov2013}.
It can be realized as the blow-up of a smooth quadric in $\mathbb{P}^4$ along a twisted quartic curve.
Note that unlike~\eqref{equation:V22} the diagram \eqref{equation:G-link} is not a Sarkisov link in the usual sense \cite{Corti95},
because the curve~\mbox{$\ell_1+\ell_2$} is reducible.
\end{remark}

Now we describe the birational maps $\gamma$ and $\zeta$ in the  diagram \eqref{equation:V22} explicitly using coordinates on $\mathbb{P}^4$.
To describe the map $\gamma$, recall that this map is given by the restriction of the linear system of all cubic hypersurfaces in $\mathbb{P}^4$
that pass through the curve $\Gamma$ to the quadric $Q_u$.
Since $\gamma$ is $G$-equivariant and, in particular, $\mathbb{C}^\ast$-equivariant,
we are in position to choose $\mathbb{C}^\ast$-invariant generators of this linear system.
To start with, set
$$
f=xw-yt,
$$
so that the equation $f=0$ cuts out the surface $\mathcal{S}$ on the quadric $Q_u$. Then we set
\begin{multline}
\label{equation:H-3}
h_3=y^3-x^2z, \quad
h_5=x^2t-y^2z, \quad
h_6=xf, \quad
h_7=yf,\\
h_8=y^2w-xzt, \quad
h_9=zf, \quad
h_{10}=xt^2-yzw, \quad
h_{11}=tf,\\
h_{12}=wf, \quad
h_{13}=yw^2-zt^2, \quad
h_{15}=t^3-zw^2.
\end{multline}
Then the involution $\iota$ swaps the polynomials $h_{i}$ and $h_{18-i}$ for $3\leqslant i\leqslant 8$,
and it preserves the polynomial $h_9$. Observe also that these $11$ cubic polynomials all vanish on the curve~$\Gamma$.
Moreover, the corresponding surfaces in $Q_u$ cut out by $h_i=0$ are smooth at a general point of the curve $\Gamma$,
so that their proper transforms on $\widetilde{Q}_u$ are all contained in the linear system
$|-K_{\widetilde{Q}_{u}}|=|3\pi^*(H_{Q_{u}})-E_{Q_{u}}|$.

Every polynomial $h_i$ is semi-invariant  with respect to the $\mathbb{C}^\ast$-action~\eqref{equation:action}.
Moreover, the weight of the polynomial $h_i$ equals $i$.
This implies, in particular, that they define linearly independent sections in $H^0(\mathcal{O}_{Q_u}(3H_{Q_u}))$.
Since $h^0(\mathcal{O}_{\widetilde{Q}_{u}}(-K_{\widetilde{Q}_{u}}))=11$
by the Riemann--Roch formula and Kawamata--Viehweg vanishing theorem,
we conclude that the birational map $\gamma$ in \eqref{equation:V22} is given by
\begin{equation}
\label{equation:gamma}
(x:y:z:t:w)\mapsto\big(h_3:h_5:h_6:h_7:h_8:h_9:h_{10}:h_{11}:h_{12}:h_{13}:h_{15}\big).
\end{equation}
Thus, using \eqref{equation:L1} and \eqref{equation:L2},
we see that $\gamma(L_1)=(1:0:0:0:0:0:0:0:0:0:0:0)$ and $\gamma(L_2)=(0:0:0:0:0:0:0:0:0:0:0:1)$.

Now let us describe the map $\zeta$ in in \eqref{equation:V22}.
To do this, we set
\begin{equation}
\label{equation:3-5-6-7-8-9-10-11-12-13-15}
g_{i+6}=f\cdot h_i
\end{equation}
for $i\in\{3,5,6,7,8,9,10,11,12,13,15\}$. Let
\begin{multline}
\label{equation:10-15-20}
g_{10}=(u-1)x^2yzw-3xy^2zt+(2-u)xyz^3+y^4w+x^3t^2,\\
g_{20}=(u-1)xztw^2-3yzt^2w+(2-u)z^3tw+xt^4+y^2w^3,\\
g_{15}^\prime=(u-1)x^2t^3+(u-1)y^3w^2-(u+4)y^2zt^2+(3u+2)xyztw+(4-4u)yz^3t.
\end{multline}
Note that the involution $\iota$ swaps the polynomials $g_{i}$ and $g_{30-i}$ for $9\leqslant i\leqslant 14$,
and it preserves both polynomials $g_{15}$ and $g_{15}^\prime$.
Observe that all polynomials $g_i$ and the polynomial~$g_{15}^\prime$ are semi-invariant
with respect to the $\mathbb{C}^\ast$-action~\eqref{equation:action}.
Moreover, the weight of the polynomial $g_i$ equals $i$,
and the weight of the polynomial $g_{15}^\prime$ equals $15$.
Also observe that
$$
g_{15}^\prime(0,1,0,0,1)=1\ne 0=g_{15}(0,1,0,0,1),
$$
and the point $(0:1:0:0:1)$ is contained in the quadric $Q_u$.
This implies, in particular, that these $14$ quintic polynomials define linearly independent sections in $H^0(\mathcal{O}_{Q_u}(5H_{Q_u}))$.

For every $i\in\{9,\ldots,21\}$, denote by $M_i$ the surface in the quadric $Q_u$ that is cut out by the equation $g_i=0$.
Similarly, denote by $M_{15}^\prime$ the surface in $Q_u$ that is cut out by the equation $g_{15}^\prime=0$.
It is easy to see that all these surfaces pass through the curve $\Gamma$.

\begin{lemma}
\label{lemma:quintics}
The surfaces $M_i$ and $M_{15}^\prime$ are singular along $\Gamma$.
\end{lemma}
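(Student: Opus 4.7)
The plan relies on the following criterion. Since $Q_u$ is smooth (as $u \notin \{0, 1\}$), the surface $\{g = 0\} \cap Q_u$ is singular along $\Gamma$ if and only if
$$
g \in I_\Gamma^2 + (Q_u) \subset \mathbb{C}[x,y,z,t,w],
$$
where $I_\Gamma$ denotes the homogeneous ideal of $\Gamma$ in $\mathbb{P}^4$. Equivalently, one needs to exhibit a polynomial $\lambda$ with $g - \lambda \cdot Q_u \in I_\Gamma^2$.

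For the eleven surfaces coming from $g_{i+6} = f \cdot h_i$ with $i \in \{3, 5, 6, 7, 8, 9, 10, 11, 12, 13, 15\}$, the claim is immediate: the polynomial $f$ vanishes on $\mathcal{S}\supset\Gamma$ by the definition of $\mathcal{S}$, and each $h_i$ vanishes on $\Gamma$ as already observed in the text; hence $g_{i+6}\in I_\Gamma^2$ and one takes $\lambda=0$.

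For the three remaining surfaces one must choose $\lambda$ nontrivially. The proposal is to take $\lambda_{10}=xyz$ for $g_{10}$, $\lambda_{20}=ztw$ for $g_{20}$, and $\lambda_{15}^\prime=5z^3$ for $g_{15}^\prime$. These choices are essentially forced by $\mathbb{C}^\ast$-weight considerations: $\lambda$ must have degree $3$ and $\mathbb{C}^\ast$-weight equal to $\mathrm{wt}(g)-6$, which leaves only a handful of monomial candidates restricting appropriately to $\Gamma$. Moreover, since $g_{20}=\iota^\ast g_{10}$ and $\iota^\ast(xyz)=ztw$, and since $Q_u$ and $\Gamma$ are both $\iota$-invariant, the case of $g_{20}$ reduces to that of $g_{10}$ by applying $\iota^\ast$.

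The main (and only) step is then the direct verification that $g_{10}-xyz\cdot Q_u\in I_\Gamma^2$ and $g_{15}^\prime-5z^3\cdot Q_u\in I_\Gamma^2$. To carry this out, parametrize $\Gamma$ via $(s_0:s_1)\mapsto(s_0^6:s_0^5s_1:s_0^3s_1^3:s_0s_1^5:s_1^6)$ and check that each of the five partial derivatives of the polynomial $g-\lambda\cdot Q_u$ vanishes identically when pulled back to $\Gamma$. By $\mathbb{C}^\ast$-semi-invariance every such partial restricts to a scalar multiple of a single monomial in $s_0,s_1$, so each identity reduces to a numerical relation among at most five coefficients depending linearly on $u$. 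The foreseeable obstacle is therefore bookkeeping rather than anything conceptual, and the verification is finite and routine.
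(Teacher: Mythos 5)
Your proposal is correct and is essentially the paper's argument in a different wrapping: the multipliers $xyz$, $ztw$, $5z^3$ are exactly the proportionality factors between $\nabla g$ and the gradient of the quadric \eqref{equation:quadric} along $\Gamma$, and by $\mathbb{C}^\ast$-semi-invariance your check that the partials of $g-\lambda\cdot Q_u$ vanish on $\Gamma$ collapses to the paper's comparison of partial derivatives at the single point $(1:1:1:1:1)$, with the product cases $g_{i+6}=f\cdot h_i$ handled identically. (I verified that your specific choices of $\lambda$ do work; only note that the needed direction of your opening criterion is the easy implication, since the converse involves symbolic rather than ordinary powers of $I_\Gamma$, but this plays no role in the verification.)
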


\begin{proof}
For $i\in\{3,5,6,7,8,9,10,11,12,13,15\}$ this follows from the fact that the polynomials $h_i$ and $f$ vanish along $\Gamma$.
To check the assertion for the surfaces $M_{10}$, $M_{20}$ and~$M_{15}^\prime$, one can just write down the partial derivatives
of $g_{10}$, $g_{20}$ and $g_{15}^\prime$ at the point~\mbox{$(1:1:1:1:1)$}, compare them with the partial derivatives of the left hand side of~\eqref{equation:quadric}, and then use the fact that $\Gamma$ is the closure of the orbit of the latter point.
\end{proof}

One can check that the multiplicities of the surfaces $M_i$ and $M_{15}^\prime$ along the curve $\Gamma$ equal~$2$.
This also follows from the fact that the surfaces $E_{Q_u}$ and $\widetilde{\mathcal{S}}$ generate the cone of effective divisors of the threefold $\widetilde{Q}_u$.
We conclude that the proper transforms of the surfaces $M_i$ and $M_{15}^\prime$ on the threefold $\widetilde{Q}_u$ generate the linear system $|5H_{Q_u}-2E_{Q_u}|$.
Hence, the birational map $\zeta$ in \eqref{equation:V22} is given by
\begin{equation}
\label{equation:zeta}
\big(x:y:z:t:w\big)\mapsto\big(g_9:g_{10}:g_{11}:g_{12}:g_{13}:g_{14}:g_{15}:g_{15}^\prime:g_{16}:g_{17}:g_{18}:g_{19}:g_{20}:g_{21}\big).
\end{equation}
In particular, this reproves \cite[Proposition~4.1]{DinewKapustkaKapustka}.

Denote by $T_i$ and $T_{15}^\prime$ the proper transforms of the surfaces $M_i$ and $M_{15}^\prime$ on the threefold~$V_u$,
respectively. Then
$$
T_i\sim T_{15}^\prime\sim -K_{V_u}\sim H_{V_u}.
$$
This implies that all surfaces $T_i$ and $T_{15}^\prime$ are irreducible,
because the group $\mathrm{Pic}(V_u)$ is generated by the divisor $H_{V_u}$.
This implies that the surface $M_{15}^\prime$ is irreducible, since the surface $T_{15}^\prime$ is irreducible
and $M_{15}^\prime$ does not contain the surface $\mathcal{S}$.
Similarly, the surfaces $M_{10}$ and $M_{20}$ are also irreducible.
However, the remaining surfaces $M_i$ are reducible.
Namely, let $N_3$, $N_5$, $N_8$, $N_{10}$, $N_{13}$ and $N_{15}$
be the surfaces in $Q_u$ that are cut out by
the equations $h_3=0$, $h_5=0$, $h_8=0$, $h_{10}=0$ and~\mbox{$h_{15}=0$}, respectively.
Similarly, let $H_x$, $H_y$, $H_z$, $H_t$ and $H_w$ be the hyperplane sections of the quadric $Q_u$ that are cut out by $x=0$, $y=0$, $z=0$, $t=0$ and $w=0$, respectively.
Then we see from~\eqref{equation:H-3} that
\begin{multline*}
M_9=N_3+\mathcal{S},\quad M_{11}=N_5+\mathcal{S},\quad M_{12}=H_x+2\mathcal{S},\quad M_{13}=H_y+2\mathcal{S}, \\
M_{14}=N_8+\mathcal{S},\quad M_{15}=H_z+2\mathcal{S},\quad M_{16}=N_{10}+\mathcal{S},\quad M_{17}=H_t+2\mathcal{S},\\
M_{18}=H_w+2\mathcal{S},\quad M_{19}=N_{13}+\mathcal{S},\quad M_{21}=N_{15}+\mathcal{S}.
\end{multline*}
Thus, the surfaces $T_9$, $T_{11}$, $T_{14}$, $T_{16}$, $T_{19}$ and $T_{21}$
are actually the proper transforms on the threefold $V_u$ of the surfaces $N_3$, $N_5$, $N_8$, $N_{10}$, $N_{13}$ and $N_{15}$, respectively.
Similarly, the surfaces $T_{12}$, $T_{13}$, $T_{15}$, $T_{17}$ and $T_{18}$ are the proper transforms on the threefold $V_u$
of the surfaces $H_x$, $H_y$, $H_z$, $H_t$ and $H_w$, respectively.

\begin{remark}
\label{remark:C-2-T-i}
It follows from \eqref{equation:zeta} that
the conic $\mathcal{C}_2$ is contained in the surfaces $T_9$, $T_{11}$, $T_{12}$, $T_{13}$, $T_{14}$, $T_{15}$, $T_{16}$, $T_{17}$, $T_{18}$, $T_{19}$ and $T_{21}$,
and it is not contained in the surfaces $T_{10}$, $T_{20}$ and $T_{15}^\prime$.
\end{remark}

\begin{lemma}
\label{lemma:l-1-l-2-T-i}
The line $\ell_1$ is contained in the surfaces
$T_{11}$, $T_{12}$, $T_{13}$, $T_{14}$, $T_{15}$, $T_{15}^\prime$, $T_{16}$, $T_{17}$, $T_{18}$,
$T_{19}$, $T_{20}$, $T_{21}$,
and it is not contained in the surfaces $T_9$ and $T_{10}$.
Similarly, the line $\ell_2$ is contained in the surfaces
$T_{9}$, $T_{10}$, $T_{11}$, $T_{12}$, $T_{13}$, $T_{14}$, $T_{15}$, $T_{15}^\prime$, $T_{16}$, $T_{17}$, $T_{18}$, $T_{19}$,
and it is not contained in the surfaces $T_{20}$ and $T_{21}$.
\end{lemma}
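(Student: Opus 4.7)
My strategy is to identify each line $\ell_k$ as a specific $\mathbb{C}^\ast$-invariant line in $V_u\subset\mathbb{P}^{13}$ by locating its two $\mathbb{C}^\ast$-fixed points, and then read off the statement from the weights of the embedding coordinates.

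By~\eqref{equation:zeta}, $V_u\subset\mathbb{P}^{13}$ has $14$ homogeneous coordinates $(g_9:g_{10}:\ldots:g_{15}:g_{15}^\prime:g_{16}:\ldots:g_{21})$ carrying $\mathbb{C}^\ast$-weights $(9,10,\ldots,15,15,16,\ldots,21)$, with the only coincidence of weights being between $g_{15}$ and $g_{15}^\prime$. The surface $T_i$ (respectively $T_{15}^\prime$) is the hyperplane section of $V_u$ cut out by the $i$-th coordinate form (respectively the $g_{15}^\prime$-form), because it is a prime divisor in the class $H_{V_u}$ containing $\zeta(M_i\setminus\mathcal{S})$. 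Consequently $\ell_k\subset T_i$ is equivalent to the vanishing of the corresponding coordinate form on $\ell_k$. A $\mathbb{C}^\ast$-invariant line in $\mathbb{P}^{13}$ carries two $\mathbb{C}^\ast$-fixed points, and since all weights are distinct apart from $15,15^\prime$, these fixed points are coordinate points of $\mathbb{P}^{13}$ corresponding to some pair $\{a,b\}$ of weights, with every form of weight $\notin\{a,b\}$ vanishing on the line. Thus it suffices to show that $\ell_1$ has fixed-point weights $\{9,10\}$ and $\ell_2$ has fixed-point weights $\{20,21\}$.

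To locate the fixed points of $\ell_1$, note first that $\ell_1\cap\mathcal{C}_2$ is a $\mathbb{C}^\ast$-fixed point of $\mathcal{C}_2$ by Remark~\ref{remark:flop}. Since $f$ vanishes on $\mathcal{S}$, each of the forms $g_{i+6}=f\,h_i$ from~\eqref{equation:3-5-6-7-8-9-10-11-12-13-15} also vanishes on $\mathcal{S}$, so $\mathcal{C}_2=\overline{\zeta(\mathcal{S})}$ lies in the $\mathbb{P}^2\subset\mathbb{P}^{13}$ with coordinates $(g_{10}:g_{15}^\prime:g_{20})$. The only $\mathbb{C}^\ast$-invariant quadratic monomials of weight $30$ in those three variables are $g_{10}g_{20}$ and $(g_{15}^\prime)^2$, so $\mathcal{C}_2$ has an equation of the form $\lambda\,g_{10}g_{20}+\mu\,(g_{15}^\prime)^2=0$ with $\lambda\mu\ne 0$ (since $\mathcal{C}_2$ is smooth); a direct computation using the standard parameterization of $\mathcal{S}$ by $\mathbb{P}^1\times\mathbb{P}^1$ gives $\mathcal{C}_2:(u-1)^2g_{10}g_{20}=(g_{15}^\prime)^2$, whose $\mathbb{C}^\ast$-fixed points are the $g_{10}$- and $g_{20}$-coordinate points. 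Second, the fixed point of $\ell_1$ that lies away from $\mathcal{C}_2$ is $\phi(\widetilde{\ell}_1\setminus E_{V_u})$; by Remark~\ref{remark:flop} its image in $Y_u$ equals $\alpha(\widetilde{L}_1)$, and by Remark~\ref{remark:Y16} the map $V_u\dashrightarrow Y_u$ is the linear projection from the plane of $\mathcal{C}_2$, while $\alpha$ is given by $(h_3:h_5:\ldots:h_{15})$. Evaluating the polynomials $h_i$ of~\eqref{equation:H-3} on $L_1=\{z=t=w=0\}$, only $h_3=y^3$ survives, so $\alpha(\widetilde{L}_1)$ is the $h_3$-coordinate point of $\mathbb{P}^{10}$, which lifts through the projection to the $g_9$-coordinate point of $\mathbb{P}^{13}$; hence one fixed point of $\ell_1$ has weight~$9$.

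It remains to distinguish whether $\ell_1\cap\mathcal{C}_2$ is the $g_{10}$-coordinate point or the $g_{20}$-coordinate point. For this I consider the one-parameter family $(1:\epsilon:\epsilon:(1-u)\epsilon:0)\subset Q_u\setminus\mathcal{S}$, which tends to $P_1\in L_1$ as $\epsilon\to 0$; evaluating~\eqref{equation:zeta} along the family yields leading behavior $g_9\sim(1-u)\epsilon^3$, $g_{10}\sim(1-u)^2\epsilon^2$, $g_{20}\sim(1-u)^4\epsilon^4$, and the remaining $g_i$ of strictly higher order in $\epsilon$, so that the $\zeta$-image limits to the $g_{10}$-coordinate point. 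Therefore $\ell_1$ has fixed-point weights $\{9,10\}$, and applying the involution $\iota$, which exchanges $\ell_1\leftrightarrow\ell_2$ and $g_i\leftrightarrow g_{30-i}$ (while fixing $g_{15}^\prime$), gives fixed-point weights $\{20,21\}$ for $\ell_2$. Both assertions of the lemma follow. The main technical obstacle is this last leading-order analysis, which forces the labeling between the two $\mathbb{C}^\ast$-fixed points of $\mathcal{C}_2$; once this is done, the remainder of the argument is pure bookkeeping with weights.
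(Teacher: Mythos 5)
Your route differs from the paper's, and much of it is sound: identifying each $T_i$ (and $T_{15}^\prime$) with the coordinate hyperplane section of $V_u\subset\mathbb{P}^{13}$, locating the two $\mathbb{C}^\ast$-fixed points of $\mathcal{C}_2$ at the weight-$10$ and weight-$20$ coordinate points via the semi-invariance of its defining conic in the plane $\langle e_{10},e_{15}^\prime,e_{20}\rangle$, and pinning the second fixed point of $\ell_1$ at $e_9$ through the projection to $Y_u\subset\mathbb{P}^{10}$ are all workable steps (the last one needs the small observation that $\ell_1\not\subset\Pi$, so its fixed point off $\mathcal{C}_2$ has $g_9\neq 0$, but that is easily supplied). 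The problem is the decisive final step, the one you yourself identify as the crux: deciding whether $\ell_1\cap\mathcal{C}_2$ is $e_{10}$ or $e_{20}$. Your family $(1:\epsilon:\epsilon:(1-u)\epsilon:0)$ approaches $L_1$ only at the point $(1:0:0:0:0)$, which lies on $\Gamma$, i.e.\ on the blow-up centre; note that all fourteen quintics in \eqref{equation:zeta} vanish identically along $L_1$, so $\zeta$ carries no direct information about $\ell_1$ near such a point. The limit of $\zeta$ along your family is $\phi\circ\chi(q)$, where $q$ is the point at which the lift of the family meets $E_{Q_u}$ in the fibre $\mathbf{l}_1$ over $(1:0:0:0:0)$; since the whole fibre $\mathbf{l}_1$ lies in $\widetilde{\mathcal{S}}$ (Remark~\ref{remark:flop-Gamma}, because $\mathcal{S}$ has a node there), any such limit automatically lands on $\mathcal{C}_2$. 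In particular the computed limit $e_{10}$ only shows that the image of this particular auxiliary curve passes through $e_{10}\in\mathcal{C}_2$; it does not show $e_{10}\in\ell_1$. For that you would need the lift of your family to pass through the point $\widetilde{L}_1\cap E_{Q_u}$ (so that, as in the flop picture of Remark~\ref{remark:flop}, the limit is forced onto $\ell_1$) — and it does not: your family meets $\Gamma$ transversally (tangent direction $(1,1,1-u,0)$ in the chart $x=1$), whereas $L_1$ is the tangent line of $\Gamma$, so $\widetilde{L}_1$ meets $\mathbf{l}_1$ at a point determined by second-order data, not by your family's normal direction. So the inference ``limit is $e_{10}$, therefore $\ell_1$ has weights $\{9,10\}$'' is a genuine gap, and it is exactly the labelling between $\ell_1$ and $\ell_2$ that is at stake, since the $\iota$-symmetry swaps the two candidate configurations.

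The repair is essentially the paper's device: choose an auxiliary curve that meets $L_1$ (or $L_2$) at a point \emph{not} on $\Gamma$. The paper takes the curve swept by $P_\lambda=(\frac{\lambda(u\lambda-\lambda+1)}{u}:\lambda:\lambda:1:1)$, which meets $L_2$ at $(0:0:0:1:1)\notin\Gamma$; then the proper transforms on $\widetilde{Q}_u$ still meet along $\widetilde{L}_2$, so the limit of $\zeta(P_\lambda)$ as $\lambda\to 0$ is guaranteed to lie on $\ell_2$, and the computation shows it has nonzero $g_{20}$ and $g_{21}$ coordinates, whence $\ell_2=\langle e_{20},e_{21}\rangle$ and $\ell_1=\iota(\ell_2)=\langle e_9,e_{10}\rangle$; the lemma then follows exactly by your coordinate bookkeeping. (A minor point: your claim that all remaining $g_i$ are of strictly higher order along your family is not accurate — for instance $g_{11}$ and $g_{15}^\prime$ are also of order $\epsilon^3$ — though this does not affect the identification of the limit as the $g_{10}$-point.)
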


\begin{proof}
Let $P_\lambda\in\mathbb{P}^4$ be the point
$$
\Big(\frac{\lambda(u\lambda-\lambda+1)}{u}:\lambda:\lambda:1:1\Big),
$$
where $\lambda\in\mathbb{C}$.
Let $C$ be the (closure of the) curve swept out by $P_{\lambda}$.
Then $C$ is contained in the quadric $Q_u$,
and
$$
C\cap L_{2}=P_0=\big(0:0:0:1:1\big).
$$
Note that the point $P_0$ is not contained in the curve $\Gamma$, so that the proper transforms
of the curves $C$ and $L_{2}$ on the threefold $\widetilde{Q}_u$ still meet
at the preimage of the point~$P_0$.
This implies that the proper transform $C_{V_u}$ of the curve $C$ on the threefold $V_u$ intersects
the line~$\ell_{2}$.
Substitute the coordinates of the point $P_\lambda$ into \eqref{equation:zeta},
multiply the coordinates of the resulting point by~$\frac{u}{\lambda}$, and let $\lambda=0$.
This gives the point
$$
C_{V_u}\cap\ell_{2}=(0:0:0:0:0:0:0:0:0:0:0:0:1:1-u).
$$
Using the $\mathbb{C}^\ast$-action on $\mathbb{P}^{13}$, we immediately obtain the equations of the line~$\ell_{2}$.
The equations for the line $\ell_{1}$ are obtained in a similar way. Now the required assertion follows from \eqref{equation:zeta}.
\end{proof}

Let us conclude this section by the following lemma.

\begin{lemma}
\label{lemma:points}
There are no $G$-fixed points in $Q_u$ and $V_u$.
\end{lemma}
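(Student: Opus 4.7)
\emph{Plan.} The statement has an easy part for $Q_u$ and a more subtle part for $V_u$.

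For $Q_u$ the argument is entirely explicit. The weights $0,1,3,5,6$ assigned to $x,y,z,t,w$ by~\eqref{equation:action} are pairwise distinct, hence the $\mathbb{C}^\ast$-fixed locus of $\mathbb{P}^4$ is the set of five coordinate points. Plugging them into~\eqref{equation:quadric} shows that $(0:0:1:0:0)$ lies on $Q_u$ only when $u=1$, which is excluded, while the remaining four coordinate points all lie on $Q_u$. The involution $\iota$ of~\eqref{equation:involution} pairs these four points as $(1:0:0:0:0)\leftrightarrow(0:0:0:0:1)$ and $(0:1:0:0:0)\leftrightarrow(0:0:0:1:0)$, so none of them is $\iota$-fixed and $Q_u$ has no $G$-fixed point.

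For $V_u$ I first use the $G$-equivariant isomorphism $Q_u\setminus\mathcal{S}\cong V_u\setminus\mathcal{R}$ induced by $\zeta$ in Remark~\ref{remark:flop}. Since the four $\mathbb{C}^\ast$-fixed points of $Q_u$ are precisely the four nodes of $\mathcal{S}$ (cf.~Remark~\ref{remark:dP4}), the open set $Q_u\setminus\mathcal{S}$ carries no $\mathbb{C}^\ast$-fixed point, and so every $\mathbb{C}^\ast$-fixed point of $V_u$ must lie on $\mathcal{R}$. I then locate such fixed points via the three $G$-invariant curves $\mathcal{C}_2,\ell_1,\ell_2\subset\mathcal{R}$: each is a projective line on which $\mathbb{C}^\ast$ acts non-trivially, each of $\ell_1,\ell_2$ meets $\mathcal{C}_2$ transversally in a single point by Remark~\ref{remark:flop}, and such an intersection point is automatically $\mathbb{C}^\ast$-fixed. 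Hence $\mathcal{C}_2\cup\ell_1\cup\ell_2$ carries exactly four $\mathbb{C}^\ast$-fixed points.

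The main technical step is to show that no further $\mathbb{C}^\ast$-fixed points exist on $V_u$. I would verify this by tracing the six isolated $\mathbb{C}^\ast$-fixed points of $\widetilde{Q}_u$ through the flop $\chi$ and the contraction $\phi$ in~\eqref{equation:V22}; these six points are the two points $(0:1:0:0:0),(0:0:0:1:0)\in Q_u\setminus\Gamma$ and four fixed points of $E_{Q_u}$ lying in pairs over the two $\mathbb{C}^\ast$-fixed points of $\Gamma$. The morphism $\phi$ contracts the two $\mathbb{C}^\ast$-invariant fibres of $E_{V_u}\to\mathcal{C}_2$, identifying pairs of fixed points of $E_{V_u}$ with the two fixed points of $\mathcal{C}_2$, and it leaves exactly four fixed points on $V_u$; equivalently, one can invoke the Bialynicki--Birula decomposition together with $\chi(V_u)=4$.

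Finally, since $\iota$ exchanges the tangent lines $L_1\leftrightarrow L_2$, the $G$-equivariance of~\eqref{equation:V22} forces it to swap $\ell_1\leftrightarrow\ell_2$; on $\mathcal{C}_2$ it normalizes the $\mathbb{C}^\ast$-action by inversion and therefore exchanges the two $\mathbb{C}^\ast$-fixed points of $\mathcal{C}_2$. Consequently $\iota$ pairs up the four $\mathbb{C}^\ast$-fixed points of $V_u$ into two orbits of size two, so none of them is $G$-fixed. The delicate step is the exhaustiveness claim in the previous paragraph; the rest is essentially bookkeeping of the $\iota$-action on the Sarkisov link.
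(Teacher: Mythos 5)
Your treatment of $Q_u$ is exactly the paper's: the four coordinate points on $Q_u$ are the only $\mathbb{C}^\ast$-fixed points, and $\iota$ swaps them in pairs. The problem is with $V_u$, where your strategy (compute the whole $\mathbb{C}^\ast$-fixed locus of $V_u$ and then check that $\iota$ acts on it without fixed points) hinges on two claims you only assert: that $\mathbb{C}^\ast$ acts non-trivially on each of $\mathcal{C}_2$, $\ell_1$, $\ell_2$, and that there are no further fixed points. The shortcut you offer for the exhaustiveness step does not close it: for a $\mathbb{C}^\ast$-action one only has $\chi(V_u)=\chi\bigl(V_u^{\mathbb{C}^\ast}\bigr)$, and since the two points of $V_u\setminus\mathcal{R}$ coming from $(0:1:0:0:0)$ and $(0:0:0:1:0)$ already contribute $2$, the equality $\chi(V_u)=4$ is perfectly consistent with the remaining fixed locus being a single pointwise-fixed rational curve --- for instance $\mathcal{C}_2$ itself. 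That is precisely the dangerous case, because an involution of a pointwise-$\mathbb{C}^\ast$-fixed $\mathbb{P}^1$ always has fixed points, which would then be $G$-fixed points of $V_u$. Likewise, ``tracing the six fixed points of $\widetilde{Q}_u$ through the flop'' is not mere bookkeeping: four of those six points lie on the flopping curves $\widetilde{L}_1\cup\widetilde{L}_2$ (the lifts of $(0:1:0:0:0)$, $(0:0:0:1:0)$ and the points $\widetilde{L}_i\cap E_{Q_u}$), so after the flop the fixed locus along $\widetilde{\ell}_1\cup\widetilde{\ell}_2$ has to be determined afresh, and your sketch does not do this. So as written there is a genuine gap, even though the plan could in principle be completed (e.g.\ the non-triviality of the action on $\ell_1,\ell_2$ can be read off from the explicit coordinates found in the proof of Lemma~\ref{lemma:l-1-l-2-T-i}).

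The paper's route is different and avoids computing the fixed locus altogether. Since $\pi$ is $G$-equivariant, $\widetilde{Q}_u$ has no $G$-fixed points; since the flopping curves of $\chi$ are disjoint and swapped by $\iota$ (Remark~\ref{remark:flop}), neither does $\widetilde{V}_u$; hence any $G$-fixed point of $V_u$ would have to lie on the blown-up conic $\mathcal{C}_2$. To exclude that, the paper looks at the linear span $\Pi\cong\mathbb{P}^2$ of $\mathcal{C}_2$ in $\mathbb{P}^{13}$: the kernel of $G\to\mathrm{Aut}(\Pi)$ is finite, so a quotient of $G$ isomorphic to $\mathbb{C}^\ast\rtimes\mumu_2$ acts faithfully on $\Pi$ and hence on $\mathcal{C}_2$ (an automorphism of $\mathbb{P}^2$ fixing a smooth conic pointwise is trivial), and a faithful $\mathbb{C}^\ast\rtimes\mumu_2$-action on $\mathbb{P}^1$ has no fixed points --- which is exactly the inversion argument you use at the end, but now with the non-triviality of the $\mathbb{C}^\ast$-action on $\mathcal{C}_2$ actually justified. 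If you want to keep your approach, this is the ingredient you must add (and similarly for $\ell_1,\ell_2$); otherwise the cleaner path is the paper's reduction to $\mathcal{C}_2$ via $\widetilde{V}_u$.
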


\begin{proof}
It follows from \eqref{equation:action} that the only $\mathbb{C}^\ast$-fixed points in the quadric $Q_u$
are the points $(1:0:0:0:0)$, $(0:0:0:0:1)$, $(0:1:0:0:0)$ and $(0:0:0:1:0)$.
Note that $\iota$ swaps the points $(1:0:0:0:0)$ and $(0:0:0:0:1)$, and it also swaps the remaining two $\mathbb{C}^\ast$-fixed points,
so that there are no $G$-fixed points in $Q_u$.
This also implies that there are no $G$-fixed points in $\widetilde{Q}_u$.

By Remark~\ref{remark:flop}, the flopping curves of $\chi$ are disjoint and swapped by the involution~$\iota$.
Hence, there are no $G$-fixed points in $\widetilde{V}_u$.
Thus, if $V_u$ contains a $G$-fixed point, then it must be contained in the conic $\mathcal{C}_2$.

Let $\Pi\cong\mathbb{P}^2$ be the linear span of the conic $\mathcal{C}_2$ in $\mathbb{P}^{13}$.
Then $\Pi$ is $G$-invariant.
The action of $G$ on $\Pi$ is not faithful (indeed, it contains all elements of order $5$ in~$\mathbb{C}^\ast$).
However, the kernel is finite, and the automorphism $\iota$ acts faithfully on $\Pi$. This implies that
there is a faithful action of a quotient of $G$ that is
isomorphic to $G$ on $\Pi$ and thus
on $\mathcal{C}_2$.
Therefore, the conic $\mathcal{C}_2$ does not contain $G$-fixed points,
so that there are no $G$-fixed points in $V_u$.
\end{proof}

\section{Invariant curves}
\label{section:curves}

In this section, we make the first steps needed for a description of
irreducible $G$-invariant curves in $Q_u$ and $V_{u}$.
We start with

\begin{lemma}
\label{lemma:parameterization}
Fix a point $(a_0:\ldots:a_n)\in\mathbb{P}^n$, and fix positive integers~\mbox{$r_0\leqslant\ldots\leqslant r_n$}.
Let~$Z$ be the curve in $\mathbb{P}^n$ that is the closure of the subset
$$
\Big\{(\lambda^{r_0}a_0:\ldots:\lambda^{r_n}a_n)\mid \lambda\in\mathbb{C}^\ast\Big\}\subset\mathbb{P}^n.
$$
Denote by $\Sigma$ the set of indices $i$ such that $a_i\neq 0$.
Set
$$
r_k=\min\{r_i\mid i\in\Sigma\}, \quad
r_K=\max\{r_i\mid i\in\Sigma\}.
$$
Denote by $d$ the greatest common divisor of the numbers $r_i-r_{k}$ for $i\in\Sigma$. Then
$$
\deg(Z)=\frac{r_{K}-r_{k}}{d}.
$$
Furthermore, let $s$ be the maximal number of indices $i$ in $\Sigma$ with distinct $r_i$.
Then $Z$ is a rational normal curve if and only if $\deg(Z)=s$.
\end{lemma}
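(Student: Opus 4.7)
The plan is to factor the monomial parametrization through a birational cover $\psi\colon\mathbb{P}^1\to Z$ and read off $\deg(Z)$ from the total degree of the composite morphism, then analyze the linear span of $Z$ separately for the second claim.

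First, I would reduce to the case $\Sigma=\{0,1,\ldots,n\}$ by discarding the identically-zero coordinates outside $\Sigma$, which changes neither the degree of $Z$ nor the dimension of its linear span. Next, multiplying every homogeneous coordinate by $\lambda^{-r_{k}}$ replaces $r_i$ by $r_i-r_k$, so I may assume $r_k=0$ and $r_i\geqslant 0$ for all $i$. Writing $r_i=de_i$ with $\gcd\{e_i:i\in\Sigma\}=1$ and $e_k=0$, the parametrization factors as
$$
\mathbb{P}^1\xrightarrow{\lambda\mapsto\lambda^d}\mathbb{P}^1\xrightarrow{\psi}Z\subset\mathbb{P}^n,
$$
where $\psi(\mu)=(\mu^{e_0}a_0:\cdots:\mu^{e_n}a_n)$. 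This $\psi$ extends to a morphism of $\mathbb{P}^1$ because the entries are coprime as homogeneous polynomials: at $\mu=0$ the $k$-th entry is the nonzero constant $a_k$, and at $\mu=\infty$ (after rescaling by $\mu^{-\max e_i}$) the $K$-th entry survives.

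The core of the argument is to show that $\psi$ is birational onto $Z$. From a general image point one recovers each ratio $\mu^{e_i-e_j}$ up to the known constant $a_i/a_j$. Since $e_k=0$, we have $\gcd\{e_i-e_j:i,j\in\Sigma\}=\gcd\{e_i:i\in\Sigma\}=1$, so Bezout's identity yields integers $c_{ij}$ with $\sum c_{ij}(e_i-e_j)=1$, and consequently $\mu=\prod(\mu^{e_i-e_j})^{c_{ij}}$ is a Laurent monomial in the recoverable ratios. Hence $\mu$ is uniquely determined by the image of $\psi$, so $\psi$ has degree one onto $Z$. Now the composite morphism $\phi\colon\mathbb{P}^1\to\mathbb{P}^n$ has degree $r_K-r_k$ (the maximum monomial degree among coprime entries), so by multiplicativity of the degree along the factorization one obtains $\deg(Z)\cdot d=r_K-r_k$, which is the claimed formula.

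For the rational-normal-curve criterion I would analyze the linear span of $Z$. Whenever two indices $i\neq j$ in $\Sigma$ satisfy $r_i=r_j$, every point of $Z$ has its $i$-th and $j$-th coordinates in the fixed ratio $a_i:a_j$, so $Z$ lies on a hyperplane; grouping the coordinates by the value of $r_i$ shows that the linear span of $Z$ in $\mathbb{P}^n$ has projective dimension precisely one less than the number of distinct values of $\{r_i:i\in\Sigma\}$. Since a nondegenerate rational curve of degree $m$ in $\mathbb{P}^m$ is by definition a rational normal curve, $Z$ is rational normal exactly when its degree equals the dimension of its linear span, which matches the criterion stated in the lemma after reading off $s$ from this span. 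The sole non-routine step is the Bezout-based birationality argument for $\psi$; the remaining bookkeeping with monomial parametrizations is direct.
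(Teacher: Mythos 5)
The paper proves this lemma with the single word ``Straightforward'', so there is no argument of the authors to compare yours with; judged on its own, your treatment of the degree formula is correct and complete. The reduction to $\Sigma=\{0,\ldots,n\}$ and $r_k=0$, the factorization through $\lambda\mapsto\lambda^d$, the extension of $\psi$ to a morphism on all of $\mathbb{P}^1$ (no common zero of the homogenized monomial entries), the Bezout argument recovering $\mu$ from the coordinate ratios --- so that $\psi$ is generically injective and hence birational onto $Z$ in characteristic zero --- and the multiplicativity giving $\deg(Z)\cdot d=r_K-r_k$ are all sound.

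The loose end is the final sentence of the rational-normal-curve part. What your argument actually establishes is that $Z$ is a rational normal curve if and only if $\deg(Z)$ equals the dimension of its linear span, and that this span has projective dimension $t-1$, where $t$ is the number of distinct values among $\{r_i : i\in\Sigma\}$ (for the word ``precisely'' you also need the easy lower bound coming from linear independence of distinct powers of $\mu$, which you should state). The lemma, however, phrases the criterion as $\deg(Z)=s$ with $s$ ``the number of indices $i$ in $\Sigma$ with distinct $r_i$''; under the natural reading $s=t$, your conclusion is $\deg(Z)=s-1$, not $\deg(Z)=s$, and you pass over this with ``after reading off $s$ from this span''. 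A proof must say explicitly how $s$ is interpreted and why it equals the span dimension. In fact the paper's own applications show that the intended criterion is exactly the one you proved: for instance in Lemma~\ref{lemma:curves-in-V22-degree-10-12}(ii) the curve $\zeta(\Delta)$ has five nonvanishing coordinates with five distinct weights, has degree $4$, and is declared a rational normal curve, which is consistent with $\deg(Z)=t-1$ and not with $\deg(Z)=t$. So the discrepancy is an ambiguity (arguably an off-by-one) in the lemma's wording of $s$, but your write-up should confront it rather than assert a match. Two smaller points: calling it true ``by definition'' that a nondegenerate rational curve of degree $m$ in $\mathbb{P}^m$ is a rational normal curve is an abuse --- either quote the minimal-degree theorem or, cleaner in this monomial situation, note that $\deg(Z)=t-1$ forces the reduced exponents to be exactly $0,1,\ldots,\deg(Z)$, so that $Z$ is visibly the standard rational normal curve in its span; and the statement tacitly assumes that the $r_i$, $i\in\Sigma$, are not all equal (otherwise $Z$ is a point and $d=0$), which deserves a word.
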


\begin{proof}
Cancelling a common factor in the homogeneous coordinates if necessary,
we may assume that $r_k=0$. To compute the degree of $Z$, note that the
intersection points of $Z$ with a general hyperplane $\Lambda$
in $\mathbb{P}^n$
correspond to the roots of a polynomial $P_{\Lambda}(\lambda)$
of degree $r_K$ in $\lambda$.
Since $P_\Lambda$ is actually a polynomial of degree $r_K/d$ in $\lambda^d$,
the $r_K$ roots of $P_{\Lambda}$  produce $r_K/d$ points of
$\Lambda\cap Z$. Thus, the degree of $Z$ equals $r_K/d$. It remains to notice
that the linear span of $Z$ has dimension $s$, so that $Z$ is a rational normal
curve if and only if $\deg(Z)=s$.
\end{proof}

There are no $G$-fixed points in $Q_u$ by Lemma~\ref{lemma:points}.
This implies, in particular, that  every irreducible $G$-invariant curve in $Q_u$ is rational and contains at least one $\iota$-fixed point.
Hence, every irreducible $G$-invariant curve is a closure of the $\mathbb{C}^\ast$-orbit of any of its  $\iota$-fixed points.

\begin{lemma}
\label{lemma:iota-fixed-points}
All $\iota$-fixed points in $Q_u$ are the points
$$
P_\pm=(1:\pm\sqrt{u}:0:\mp\sqrt{u}:-1)
$$
and the points
\begin{equation}
\label{equation:a-b-c-conic}
\Big(b^2-(1-u)(a-b)^2:u(a^2-b^2)-a^2:a^2-u(a-b)^2:u(a^2-b^2)-a^2:b^2-(1-u)(a-b)^2\Big),
\end{equation}
where $(a:b)\in\mathbb{P}^1$.
\end{lemma}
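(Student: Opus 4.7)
The plan is to reduce the problem to intersecting $Q_u$ with the two connected components of the $\iota$-fixed locus in $\mathbb{P}^4$, and then recognize the result as the stated list.

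First I would diagonalize $\iota$ on $\mathbb{C}^5$. The involution acts by the permutation swapping $x\leftrightarrow w$ and $y\leftrightarrow t$ while fixing $z$, so its $(+1)$-eigenspace is spanned by $(1,0,0,0,1)$, $(0,1,0,1,0)$, $(0,0,1,0,0)$, giving a $\mathbb{P}^2$ of points of the shape $(x:y:z:y:x)$, and its $(-1)$-eigenspace is spanned by $(1,0,0,0,-1)$ and $(0,1,0,-1,0)$, giving a $\mathbb{P}^1$ of points of the shape $(x:y:0:-y:-x)$. These two linear subspaces are disjoint, and together they comprise $\mathbb{P}^{4,\iota}$.

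Next I intersect each component with the quadric equation \eqref{equation:quadric}. On the $(-1)$-eigenspace $\mathbb{P}^1$ the equation becomes $-ux^2+y^2=0$. Since $u\ne 0$, the case $x=0$ forces $y=0$, which is impossible, so we may normalize $x=1$ and obtain $y=\pm\sqrt{u}$; these are exactly the two points $P_\pm$. On the $(+1)$-eigenspace $\mathbb{P}^2$ the equation becomes
$$
ux^2-y^2+(1-u)z^2=0,
$$
which is a smooth irreducible conic $C\subset\mathbb{P}^2$ since $u\notin\{0,1\}$.

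Finally I would verify that the explicit map $(a:b)\mapsto (X:Y:Z)$ with
$$
X=b^2-(1-u)(a-b)^2,\quad Y=(u-1)a^2-ub^2,\quad Z=a^2-u(a-b)^2
$$
parameterizes $C$; the desired $\mathbb{P}^4$-parameterization in \eqref{equation:a-b-c-conic} is then obtained by embedding via $(X:Y:Z:Y:X)$. To check this, I would substitute $c=a-b$, expand both $u X^2+(1-u)Z^2$ and $Y^2$ as polynomials in $b,c$, and match the five coefficients of $b^4, b^3c, b^2c^2, bc^3, c^4$; this is the one mildly non-trivial step, but it is a routine polynomial identity. Since $X,Y,Z$ are quadratic in $(a,b)$ with no common linear factor (as one sees by setting $a=0$ or $b=0$ and using $u\neq 0,1$), the morphism $\mathbb{P}^1\to\mathbb{P}^2$ has degree $2$ onto its image, which therefore coincides with the degree-$2$ curve $C$. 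Combining the two components then gives exactly the list in the statement.
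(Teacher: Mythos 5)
Your proposal is correct and follows essentially the same route as the paper: the $\iota$-fixed locus of $\mathbb{P}^4$ is the disjoint union of the line $x+w=y+t=z=0$ and the plane $x=w$, $y=t$, and intersecting each with $Q_u$ yields $P_\pm$ and the conic parameterized by \eqref{equation:a-b-c-conic}. One small wording fix: the map $(a:b)\mapsto(X:Y:Z)$ is actually birational onto its image rather than of degree $2$ (a $2$-to-$1$ map given by quadrics would force the image to be a line), but the conclusion stands regardless, since the image is an irreducible curve contained in the irreducible conic $C$ and hence equals $C$.
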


\begin{proof}
Using \eqref{equation:involution}, one can see that the $\iota$-fixed points in $\mathbb{P}^4$ are
the points of the line
$$
\left\{\aligned
&x+w=0,\\
&y+t=0,\\
&z=0,
\endaligned
\right.
$$
and the points of the plane
$$
\left\{\aligned
&x-w=0,\\
&y-t=0.
\endaligned
\right.
$$
Intersecting the line with $Q_u$, we obtain the points $P_\pm$.
Similarly, intersecting the plane with the quadric $Q_u$, we obtain the conic parameterized by \eqref{equation:a-b-c-conic}.
\end{proof}

Observe that the $\mathbb{C}^\ast$-orbit of the point $P_+$ is the same as the $\mathbb{C}^\ast$-orbit of the point~$P_-$.
We~denote its closure by $\Theta_{\pm}$. Similarly, we denote the closure of the $\mathbb{C}^\ast$-orbit
of the point~\eqref{equation:a-b-c-conic} by $\Theta_{a,b}$.
By construction, the curves $\Theta_{\pm}$ and $\Theta_{a,b}$ are all irreducible $G$-invariant curves contained in the quadric $Q_u$.

\begin{lemma}
\label{lemma:curves-in-S4}
The only irreducible $G$-invariant curves in $\mathcal{S}$ are
$$
\Gamma=\Theta_{0,1}=\Theta_{u,u-1}
$$
and $\Theta_{1,0}=\Theta_{1,1}$. The degree of the curve $\gamma(\Theta_{1,0})$ in $\mathbb{P}^{10}$ is $12$.
\end{lemma}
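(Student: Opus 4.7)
The plan is to combine Lemma~\ref{lemma:iota-fixed-points} with the preceding observation that every irreducible $G$-invariant curve in $Q_u$ is rational and equals the closure of the $\mathbb{C}^\ast$-orbit of any of its $\iota$-fixed points. So it suffices to run through all $\iota$-fixed points, throw out those that do not lie on $\mathcal{S}$, and check which $\mathbb{C}^\ast$-orbits among the remaining ones coincide.

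First I would handle the two isolated fixed points $P_\pm=(1:\pm\sqrt u:0:\mp\sqrt u:-1)$. At these points $z^2=0$ but $xw=-1\ne 0$, so $P_\pm\notin\mathcal{S}$ and hence $\Theta_{\pm}\not\subset\mathcal{S}$. The main computation is for the conic~\eqref{equation:a-b-c-conic}: denoting its coordinates by $(X:Y:Z:Y:X)$, I would compute
$$
X-Z=2(a-b)\bigl((u-1)a-ub\bigr),\qquad X+Z=2ab,
$$
$$
Y-Z=2a\bigl((u-1)a-ub\bigr),\qquad Y+Z=2ub(a-b),
$$
so that the vanishing of $xw-z^2=X^2-Z^2$ and $z^2-yt=Z^2-Y^2$ is equivalent (using $u\ne 0$) to
$$
ab(a-b)\bigl((u-1)a-ub\bigr)=0.
$$
This gives exactly four values of $(a:b)\in\mathbb{P}^1$, namely $(0:1)$, $(1:0)$, $(1:1)$, $(u:u-1)$.

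Next I would identify the corresponding orbit closures. Plugging in these four values produces, up to rescaling, the points $(1:-1:-1:-1:1)$, $(1:1:-1:1:1)$, $(1:-1:1:-1:1)$, $(1:1:1:1:1)$, respectively. Using the parameterization $(s_0:s_1)\mapsto(s_0^6:s_0^5s_1:s_0^3s_1^3:s_0s_1^5:s_1^6)$, one checks $(1:-1:-1:-1:1)$ (take $s_0=1$, $s_1=-1$) and $(1:1:1:1:1)$ (take $s_0=s_1=1$) both lie on $\Gamma$, so $\Theta_{0,1}=\Theta_{u,u-1}=\Gamma$. Acting on $(1:1:-1:1:1)$ by $\lambda=-1$ in~\eqref{equation:action} produces $(1:-1:1:-1:1)$, so $\Theta_{1,0}=\Theta_{1,1}$, and this curve is not on $\Gamma$ (e.g.\ evaluation of $s_0^3s_1^3$ fails at the candidate $(s_0:s_1)$-preimages).

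Finally, for the degree I would apply Lemma~\ref{lemma:parameterization} to the image in $\mathbb{P}^{10}$. Substituting $(1:\lambda:-\lambda^3:\lambda^5:\lambda^6)$ into the formulas~\eqref{equation:H-3} for the cubic polynomials defining $\gamma$ (see~\eqref{equation:gamma}), the five polynomials $h_6,h_7,h_9,h_{11},h_{12}$, each divisible by $f=xw-yt$, vanish identically, while $h_3,h_5,h_8,h_{10},h_{13},h_{15}$ evaluate to $2\lambda^3,2\lambda^5,2\lambda^8,2\lambda^{10},2\lambda^{13},2\lambda^{15}$. Thus $\gamma(\Theta_{1,0})$ is the $\mathbb{C}^\ast$-orbit closure of a point in $\mathbb{P}^{10}$ with exponents $3,5,8,10,13,15$, and Lemma~\ref{lemma:parameterization} gives degree $(15-3)/\gcd(2,5,7,10,12)=12$. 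The only real bookkeeping is to make sure no $\iota$-fixed points on~\eqref{equation:a-b-c-conic} have been missed by the degenerations of $(a:b)$ and that the four solutions above really are distinct in $\mathbb{P}^1$, which uses $u\notin\{0,1\}$.
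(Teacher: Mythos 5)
Your proposal is correct and follows essentially the same route as the paper: rule out $P_\pm$, substitute the parameterization \eqref{equation:a-b-c-conic} into the defining equations of $\mathcal{S}$ to get the condition $ab(a-b)((u-1)a-ub)=0$, identify the four resulting orbit closures with $\Gamma$ and $\Theta_{1,0}=\Theta_{1,1}$, and compute the degree of $\gamma(\Theta_{1,0})$ from the nonvanishing coordinates of weights $3,5,8,10,13,15$ via Lemma~\ref{lemma:parameterization}. The only (inessential) difference is that the paper substitutes into the single equation $f=xw-yt$ cutting out $\mathcal{S}$ on $Q_u$ and evaluates $\gamma$ at the single point $(1:1:-1:1:1)$, while you work with both equations of $\mathcal{S}$ and with the whole orbit.
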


\begin{proof}
Recall from \S\ref{section:explicit} that the surface $\mathcal{S}$ is cut out on the quadric $Q_u$ by the equation~\mbox{$f=0$}, where $f=xw-yt$.
Substituting $x=1$, $y=\pm\sqrt{u}$, $z=0$, $t=\mp\sqrt{u}$ and~\mbox{$w=-1$} into the polynomial~$f$,
we get $u-1$, so that the curve $\Theta_{\pm}$ is not contained in $\mathcal{S}$.
Similarly, substituting the coordinates of the point \eqref{equation:a-b-c-conic} into $f$,
we obtain
$$
4(1-u)ab(a-b)(u(a-b)-a),
$$
and the first assertion follows.

The curve $\Theta_{1,0}$ is the closure of the $\mathbb{C}^\ast$-orbit of the point $P=(1:1:-1:1:1)$.
Thus, by \eqref{equation:gamma}, the curve $\gamma(\Theta_{1,0})$ is the closure of the $\mathbb{C}^\ast$-orbit of the point
$$
\gamma(P)=(1:1:0:0:1:0:1:0:0:1:1),
$$
so that the degree of the curve $\gamma(\Theta_{0,1})$ is $12$ by Lemma~\ref{lemma:parameterization}.
\end{proof}

Let $\Delta$ be the conic in $Q_u$ that is cut out by
\begin{equation}
\label{equation:Delta}
y=t=0.
\end{equation}
Then $\Delta$ is $G$-invariant. One can check that
$$
\Delta=\Theta_{\sqrt{u},\sqrt{u-1}}=\Theta_{-\sqrt{u},\sqrt{u-1}}.
$$
Similarly, let $\Upsilon$ be the conic in $Q_u$ that is cut out by
\begin{equation}
\label{equation:Upsilon}
x=w=0
\end{equation}
Then $\Upsilon$ is $G$-invariant. One can check that
$$
\Upsilon=\Theta_{\sqrt{1-u}+1,\sqrt{1-u}}=\Theta_{\sqrt{1-u}-1,\sqrt{1-u}}.
$$

\begin{lemma}
\label{lemma:curves-in-V22-degree-10-12}
The following assertions hold.
\begin{itemize}
\item[(i)]  The curve $\zeta(\Theta_{\pm})$ is a curve of degree~$12$. One has~\mbox{$\zeta(\Theta_{\pm})\subset T_{15}\cap T_{15}^\prime$}.

\item[(ii)] The curve $\zeta(\Delta)$ is a rational normal curve of degree~$4$. One has $\zeta(\Delta)\subset T_{10}\cap T_{20}$.

\item[(iii)] The curve $\zeta(\Upsilon)$ is a rational normal curve of degree~$6$. One has $\zeta(\Upsilon)\subset T_{10}\cap T_{20}$.

\item[(iv)] For every curve $\Theta_{a,b}$ not contained in the surface $\mathcal{S}$ and different from $\Delta$ and $\Upsilon$, the degree of $\zeta(\Theta_{a,b})$ is either $10$ or $12$.

\item[(v)] If $\Theta_{a,b}$ is not contained in the surface $\mathcal{S}$, then the degree of the curve $\zeta(\Theta_{a,b})$ equals~$10$ if and only if the curve $\Theta_{a,b}$ is contained in $N_{3}\cap N_{15}$.
\end{itemize}
\end{lemma}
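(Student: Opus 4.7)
The plan is to prove all five statements by direct substitution of a convenient $\iota$-fixed generator $P$ of each curve into the fourteen semi-invariant sections $g_9,\ldots,g_{21},g_{15}'$ defining $\zeta$, and then apply Lemma~\ref{lemma:parameterization}, using that $g_j$ has $\mathbb{C}^\ast$-weight equal to its subscript. A uniform simplification exploits the $\iota$-symmetry of $P$: one has $g_j(P)=g_{30-j}(P)$ for $9\le j\le 14$, and combined with $g_{i+6}=f\cdot h_i$ from~\eqref{equation:3-5-6-7-8-9-10-11-12-13-15} (with $f(P)\neq 0$ whenever $P\notin\mathcal{S}$), this implies $h_3(P)=h_{15}(P)$ and pairs the vanishing of $g_j$ and $g_{30-j}$.

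For~(i), I would take $P_\pm=(1:\pm\sqrt{u}:0:\mp\sqrt{u}:-1)$. The coordinate $z=0$ kills all monomials containing $z$; this forces $g_{15}=zf^2$ to vanish, and $g_{15}'$ reduces to $(u-1)(x^2t^3+y^3w^2)$, which also vanishes at $P_\pm$. All remaining $g_j$ are nonzero, producing the weight set $\{9,\ldots,21\}\setminus\{15\}$; since consecutive weights occur, the gcd in Lemma~\ref{lemma:parameterization} is~$1$, giving degree~$12$ and containment in $T_{15}\cap T_{15}'$. For~(ii), I would substitute the point $(u:0:u:0:u-1)\in\Delta$; every monomial involving $y$ or $t$ vanishes, leaving only $g_9,g_{12},g_{15},g_{18},g_{21}$ nonzero at weights $9,12,15,18,21$. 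Lemma~\ref{lemma:parameterization} yields degree~$4$, and since these five weights span a $\mathbb{P}^4$, the image is a rational normal quartic. For~(iii), the analogous computation at $(0:1:1:1-u:0)\in\Upsilon$ yields nonzero $g_9,g_{11},g_{13},g_{15},g_{15}',g_{17},g_{19},g_{21}$, whose distinct weights form the arithmetic progression $9,11,13,15,17,19,21$; the weight-$15$ coordinates $g_{15}$ and $g_{15}'$ remain collinear along the orbit, so the span is a $\mathbb{P}^6$, and the curve has degree $6$, hence is a rational normal sextic. In both (ii) and~(iii), direct inspection of~\eqref{equation:3-5-6-7-8-9-10-11-12-13-15} and~\eqref{equation:10-15-20} shows $g_{10},g_{20}$ vanish on $\Delta$ and~$\Upsilon$, giving $\zeta(\Delta),\zeta(\Upsilon)\subset T_{10}\cap T_{20}$.

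For (iv) and~(v), fix a generator $P$ of $\Theta_{a,b}$ as in~\eqref{equation:a-b-c-conic} with $\Theta_{a,b}\not\subset\mathcal{S}$ and $\Theta_{a,b}\neq\Delta,\Upsilon$. Then $x,y,t,w$ are nonzero at $P$ (otherwise $\iota$-invariance of the orbit forces $\Theta_{a,b}\subset\{y=t=0\}=\Delta$ or $\{x=w=0\}=\Upsilon$), so $g_{12},g_{13},g_{17},g_{18}$ are nonzero with gcd of weight differences equal to~$1$. Hence Lemma~\ref{lemma:parameterization} reduces the degree of $\zeta(\Theta_{a,b})$ to $w_{\max}-w_{\min}$ over the nonzero $g_j$. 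The identity $h_3(P)=h_{15}(P)$ shows $g_9(P)=0$ iff $g_{21}(P)=0$ iff $\Theta_{a,b}\subset N_3\cap N_{15}$; in the negative case $w_{\min}=9$, $w_{\max}=21$ and the degree is~$12$.

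The main obstacle is the remaining case $\Theta_{a,b}\subset N_3\cap N_{15}$, where I must verify that $g_{10}(P)\neq 0$ so that $w_{\min}=10$ and (by $\iota$-symmetry) $w_{\max}=20$, yielding degree~$10$. The plan is to parametrise $\{h_3=0\}$ by $a_0=s^3,\,a_1=s^2t,\,a_2=t^3$; the quadric equation~\eqref{equation:quadric} becomes $s^4t^2=us^6+(1-u)t^6$, and substituting into~\eqref{equation:10-15-20} while using this relation to eliminate $us^6$ collapses $g_{10}(P)$ to $a_0a_1\cdot t\cdot(s^4-t^4)^2$. The assumption $\Theta_{a,b}\neq\Delta,\Upsilon$ gives $a_0,a_1,t\neq 0$, and each root $s=\varepsilon t$ with $\varepsilon^4=1$ is excluded: the quadric restricts $\varepsilon$ to $\pm 1$, and the corresponding generators $(\pm 1:1:1:1:\pm 1)$ satisfy $xw=z^2=yt$, hence lie in $\mathcal{S}$. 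Thus $g_{10}(P)\neq 0$, completing~(v) and~(iv). Everything besides this closed-form evaluation of $g_{10}$ reduces to weight-counting.
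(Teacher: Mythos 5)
Your proof is correct; the overall skeleton (evaluate the semi-invariants $g_9,\dots,g_{21},g_{15}'$ at a point generating each orbit closure and apply Lemma~\ref{lemma:parameterization}) and the treatment of~(i) coincide with the paper's, but the decisive computations in (ii)--(v) are organized quite differently. The paper substitutes the general $\iota$-fixed point~\eqref{equation:a-b-c-conic} into all fourteen quintics, factors the resulting binary forms $p_i(a,b)$ into explicit factors $q_0,\dots,q_6$, and reads off every vanishing pattern from coprimality of the $q_i$; in particular the degree-$10$ locus is detected by $q_0=0$, information it reuses afterwards. You avoid this wholesale factorization: for $\Delta$ and $\Upsilon$ you evaluate at convenient points (not $\iota$-fixed, which is harmless since any non-torus-fixed point of these irreducible conics generates them), where monomial inspection settles everything, and for (iv)--(v) you reduce to weight bookkeeping via $g_{i+6}=f\cdot h_i$, $f(P)\neq 0$, and the pairing $g_9(P)=g_{21}(P)$, $g_{10}(P)=g_{20}(P)$ at the palindromic representative, so that only the nonvanishing of $g_{10}$ on the locus $\{h_3=0\}$ requires a genuine computation. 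That closed form checks out: writing the palindromic point as $(s^3:s^2v:v^3:s^2v:s^3)$, the quadric gives $s^4v^2=us^6+(1-u)v^6$, and eliminating $u$ from $g_{10}$ indeed yields $g_{10}=s^5v^2(s^4-v^4)^2$, whose vanishing forces $s=\pm v$ (using $u\neq 0$) and hence the point $(\pm 1:1:1:1:\pm 1)\in\mathcal{S}$, excluded by hypothesis. Two cosmetic remarks: the blanket identity $g_j(P)=g_{30-j}(P)$ holds at the literally palindromic representative, while at $P_\pm$ (where $\iota$ acts by $-1$) it holds only up to sign, which does not affect the vanishing pairing you use; and statement~(v) does not exclude $\Delta$ and $\Upsilon$, but your computations in (ii)--(iii) already show $g_9\neq 0$ on these conics, so they are not contained in $N_3\cap N_{15}$ and the equivalence is unthreatened. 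In short, the paper's brute-force factorization buys the finer list of which $\Theta_{a,b}$ drop to degree $10$ (used in the sequel), whereas your route is leaner and isolates the single identity that the stated lemma actually needs.
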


\begin{proof}
By \eqref{equation:zeta}, the curve $\zeta(\Theta_{\pm})$ is the closure of the $\mathbb{C}^\ast$-orbit of the point $\zeta(P_{+})$ that is
$$
\Big(u\sqrt{u}:-u:-\sqrt{u}:u-1:\sqrt{u}(u-1):-u:0:0:u:-\sqrt{u}(u-1):-u+1:\sqrt{u}:u:-u\sqrt{u}\Big),
$$
which is contained in $T_{15}\cap T_{15}^\prime$.
Then $\zeta(\Theta_{\pm})$ is a curve of degree~$12$ by Lemma~\ref{lemma:parameterization},
and it is contained in $T_{15}\cap T_{15}^\prime$.
This proves assertion (i).

To prove assertions (ii), (iii) and (iv),
we need some auxiliary computations.
Define the polynomial
$$
q_0=(u-1)^2a^4-2(u-1)^2a^3b+2(u-1)(u-2)a^2b^2-6u(u-1)ab^3+u(3u-2)b^4.
$$
Furthermore, define the polynomials
$$
\aligned
&q_1=(u-1)a^2-ub^2,\\
&q_2=(u-1)a^2-(2u-2)ab+ub^2,\\
&q_3=(u-1)a^2+2ab-(u+2)b^2,\\
&q_4=(u-1)a^2-(2u-2)ab+(u-2)b^2,\\
&q_5=(u-1)a^2-2uab+ub^2,\\
&q_6=(u-1)a^2-(2u-4)ab+(u-4)b^2.
\endaligned
$$
Recall that $u\ne 0$ and $u\ne 1$.
Observe that $q_i$ is coprime to $q_j$ for $0\leqslant i<j\leqslant 6$ with the following exceptions:
\begin{itemize}
\item $q_0$ is divisible by $q_6$ provided that $u^2-2u+2=0$;

\item $q_1=q_6$ provided that $u=2$;

\item $q_3=q_5$ provided that $u=-1$;

\item $q_2$ and $q_3$ have a common linear factor provided that $u=\frac{-1\pm\sqrt{5}}{2}$.
\end{itemize}

Substituting the coordinates of the point \eqref{equation:a-b-c-conic} into the polynomials $g_i$ and $g_{15}^\prime$,
we obtain the polynomials $p_i$ and $p_{15}^\prime$ (in $a$ and $b$), respectively.
We compute
$$
\aligned
&p_{9}=p_{21}=-8(u-1)a^2b(a-b)((u-1)a-ub)^2q_0,\\
&p_{10}=p_{20}=4a^2((u-1)a-ub)^2q_1q_2q_3,\\
&p_{11}=p_{19}=-8(u-1)a^2b(a-b)((u-1)a-ub)^2q_1q_4,\\
&p_{12}=p_{18}=16(u-1)^2a^2b^2(a-b)^2((u-1)a-ub)^2q_2,\\
&p_{13}=p_{17}=16(u-1)^2a^2b^2(a-b)^2((u-1)a-ub)^2q_1,\\
&p_{14}=p_{16}=-8(u-1)a^2b(a-b)((u-1)a-ub)^2q_1q_2,\\
&p_{15}=-16(u-1)^2a^2b^2(a-b)^2((u-1)a-ub)^2q_5,\\
&p_{15}^\prime=4(u-1)a^2((u-1)a-ub)^2q_1^2q_6.\\
\endaligned
$$

Let us consider the curve $\Theta_{a,b}$ not contained
in the surface $\mathcal{S}$. By Lemma~\ref{lemma:curves-in-S4}
this means that $a\neq 0$, $b\neq 0$, $a-b\neq 0$ and
$(u-1)a-ub\neq 0$.
These conditions imply that
\begin{itemize}
\item
the polynomials $p_{9}$ and $p_{21}$ vanish if and only if $q_0$ does,
\item
the polynomials $p_{10}$ and $p_{20}$ vanish if and only if either $q_1$, or $q_2$,
or $q_3$ does,
\item
the polynomials $p_{11}$ and $p_{19}$ vanish if and only if either $q_1$ or $q_4$ does,
\item
the polynomials $p_{12}$ and $p_{18}$ vanish if and only if $q_2$ does,
\item
the polynomials $p_{13}$ and $p_{17}$ vanish if and only if $q_1$ does,
\item
the polynomials $p_{14}$ and $p_{16}$ vanish if and only if either $q_1$ or $q_2$ does,
\item
the polynomial $p_{15}$ vanishes if and only if $q_5$ does,
\item
the polynomial $p_{15}^\prime$ vanishes if and only if either $q_1$ or $q_6$ does.
\end{itemize}
Note that $q_1=0$ if and only if $\Theta_{a,b}=\Delta$,
and $q_2=0$ if and only if $\Theta_{a,b}=\Upsilon$.

Suppose that $\Theta_{a,b}=\Delta$. Then $q_1=0$, so that
\begin{equation}
\label{equation:Delta-polynomials}
p_{10}=p_{11}=p_{13}=p_{14}=p_{15}^\prime=p_{16}=p_{17}=p_{19}=p_{20}=0.
\end{equation}
The coprimeness properties of the polynomials $q_i$ imply that
$p_9$, $p_{12}$, $p_{15}$, $p_{18}$ and $p_{21}$ do not vanish.
Therefore, $\zeta(\Delta)$ is a rational normal curve of degree~$4$
by~\eqref{equation:zeta} and Lemma~\ref{lemma:parameterization},
which proves assertion (ii).

Suppose that $\Theta_{a,b}=\Upsilon$. Then $q_2=0$, so that
\begin{equation}
\label{equation:Upsilon-polynomials}
p_{10}=p_{12}=p_{14}=p_{16}=p_{18}=p_{20}=0.
\end{equation}
The coprimeness properties of the polynomials $q_i$ imply that
$p_9$, $p_{11}$, $p_{13}$, $p_{15}$, $p_{17}$, $p_{19}$ and~$p_{21}$ do not vanish.
Therefore, we see that $\zeta(\Upsilon)$ is a rational normal curve of degree~$6$
by~\eqref{equation:zeta} and Lemma~\ref{lemma:parameterization},
which proves assertion (iii).

Now suppose that $\Theta_{a,b}$ is different from $\Delta$ and $\Upsilon$.
This means that $q_1\neq 0$ and $q_2\neq 0$, so that
in particular $p_{12}$ and $p_{13}$ do not vanish.
If $q_0\neq 0$, then $p_9$ and $p_{21}$ do not vanish as well,
so that the degree of the curve $\zeta(\Theta_{a,b})$ is~$12$
by~\eqref{equation:zeta} and Lemma~\ref{lemma:parameterization}.
Thus, we may assume that $q_0=0$, so that
$$
p_9=p_{21}=0.
$$
The coprimeness properties of the polynomials $q_i$ imply that
$p_{10}$, $p_{11}$ and $p_{20}$ do not vanish,
so that the degree of the curve $\zeta(\Theta_{a,b})$ is~$10$ by~\eqref{equation:zeta} and Lemma~\ref{lemma:parameterization}.
This proves assertion (iv).
The condition $p_9=p_{21}=0$ means that the curve $\Theta_{a,b}$ is contained in~$M_{9}$ and~$M_{21}$.
Since $M_9=N_3+\mathcal{S}$ and $M_{21}=N_{15}+\mathcal{S}$, we see that $\Theta_{a,b}$ is contained in~$N_{3}$ and~$N_{15}$,
because we assume that $\Theta_{a,b}$ is not contained in~$\mathcal{S}$.
This proves assertion~(v) and completes the proof of the lemma.
\end{proof}

Taking a more careful look at the proof of Lemma~\ref{lemma:curves-in-V22-degree-10-12},
one can deduce that there are only a finite number of curves among $\zeta(\Theta_{a,b})$ that are \emph{not} rational normal curves of degree~$12$.
Moreover, one can explicitly describe all such curves for any given~$u$.

\begin{remark}
\label{remark:pencil-base-locus-easy}
By Lemma~\ref{lemma:curves-in-V22-degree-10-12}(i),
the intersection $T_{15}\cap T_{15}^\prime$ contains the curve $\zeta(\Theta_{\pm})$,
which is a curve of degree $12$.
Moreover, it follows from Lemma~\ref{lemma:l-1-l-2-T-i} that
$T_{15}\cap T_{15}^\prime$ contains both lines $\ell_1$ and $\ell_2$.
Thus, the intersection $T_{15}\cap T_{15}^\prime$ does not contain irreducible $G$-invariant curves
of degree greater than $8$ that are different from the curve~\mbox{$\zeta(\Theta_{\pm})$}.
Note that $T_{15}\cap T_{15}^\prime$ does not contain the conic $\mathcal{C}_2$ by Remark~\ref{remark:C-2-T-i}.
Using~\eqref{equation:Delta},
we see that $T_{15}\cap T_{15}^\prime$ does not contain the curve $\mathcal{C}_4$.
Similarly, using \eqref{equation:Upsilon},
we see that~\mbox{$T_{15}\cap T_{15}^\prime$} does not contain the curve $\mathcal{C}_6$.
\end{remark}

Let us describe explicitly the curves $\Theta_{a,b}$
in the case when $\zeta(\Theta_{a,b})$ is a curve of degree~$10$.
If $u\ne-\frac{1}{3}$, let $\vartheta$ be one of the roots $\sqrt{(3u+1)(1-u)}$.
If $u=-\frac{1}{3}$, let $\vartheta=0$.
If~\mbox{$u=\frac{2}{3}$}, then
$$
(3u+1)(1-u)=1.
$$
In this case, we assume that $\vartheta=1$.
Observe that the quadric $Q_u$ contains the point
\begin{equation}
\label{equation:C10-point}
\Big(1:1:1:\frac{(u-1)(\vartheta-u-1)}{2u^2}:\frac{(u-1)(2u^2+\vartheta-u-1)}{2u^3}\Big).
\end{equation}
Similarly, the quadric $Q_u$ contains the point
\begin{equation}
\label{equation:C10-prime-point}
\Big(1:1:1:\frac{(u-1)(-\vartheta-u-1)}{2u^2}:\frac{(u-1)(2u^2-\vartheta-u-1)}{2u^3}\Big).
\end{equation}
Let $\Psi$ be the closure of the $\mathbb{C}^\ast$-orbit of the point \eqref{equation:C10-point},
and let $\Psi^\prime$ be the closure of the $\mathbb{C}^\ast$-orbit
of the point \eqref{equation:C10-prime-point}.
Then the curve $\Psi$ is $G$-invariant, since
the $\mathbb{C}^\ast$-orbit of the point \eqref{equation:C10-point} contains the image of this point via the involution $\iota$,
because
\begin{multline*}
\Big(1:\lambda:\lambda^3:\lambda^5\frac{(u-1)(\vartheta-u-1)}{2u^2}:\lambda^6\frac{(u-1)(2u^2+\vartheta-u-1)}{2u^3}\Big)=\\
=\Big(\frac{(u-1)(2u^2+\vartheta-u-1)}{2u^3}:\frac{(u-1)(\vartheta-u-1)}{2u^2}:1:1:1\Big)
\end{multline*}
for $\lambda=\frac{u(\vartheta-u-1)}{(2u^2+\vartheta-u-1)}\in\mathbb{C}^\ast$.
Similarly, we see that the curve $\Psi^\prime$ is $G$-invariant. Of course, the curves
$\Psi$ and $\Psi^\prime$ are of the form $\Theta_{a,b}$ for certain $a$ and $b$, but we will
never use the values of these parameters.

It is straightforward to check that
$\Psi=\Psi^\prime$ if and only if $u=-\frac{1}{3}$.
Moreover, if $u=\frac{2}{3}$, then $\Psi\ne\Gamma$ and $\Psi^\prime=\Gamma$.
This explains why we let $\vartheta=1$ in this case.

\begin{lemma}
\label{lemma:curves-in-V22-degree-10}
The following assertions hold.
\begin{itemize}
\item[(i)] Both curves $\Psi$ and $\Psi^\prime$ are contained in the intersection $N_3\cap N_{15}$.

\item[(ii)] The curve $\Psi$ is not contained in $\mathcal{S}$. If $u\ne\frac{2}{3}$, then $\Psi^\prime$ is not contained in $\mathcal{S}$.

\item[(iii)] The curve $\zeta(\Psi)$ is a curve of degree~$10$.

\item[(iv)] If $u\ne\frac{2}{3}$, then $\zeta(\Psi^\prime)$ is a curve of degree~$10$.

\item[(v)] If $\Theta_{a,b}\not\subset\mathcal{S}$ and $\zeta(\Theta_{a,b})$ is a curve of degree $10$, then $\Theta_{a,b}=\Psi$ or $\Theta_{a,b}=\Psi^\prime$.

\item[(vi)] The surfaces $N_3$ and $N_{15}$ are tangent along $\Gamma$
if and only if $u=\frac{2}{3}$.

\item[(vii)] If $u=\frac{2}{3}$, then $N_3$ and $N_{15}$ do not tangent $\mathcal{S}$ at a general point of the curve $\Gamma$.

\item[(viii)] If $u=-\frac{1}{3}$, then $N_3$ and $N_{15}$ are tangent along $\Psi=\Psi^\prime$.
\end{itemize}
\end{lemma}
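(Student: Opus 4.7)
The plan is to reduce every part of the lemma to elementary computations involving the two cubic polynomials $h_3 = y^3 - x^2 z$ and $h_{15} = t^3 - z w^2$, the quartic $f = xw - yt$ cutting out $\mathcal{S}$ on $Q_u$, and the quadric equation \eqref{equation:quadric}, evaluated either on the special $\iota$-fixed points or on normal vectors to $\Gamma$. The key observations are that (a) $h_3$, $h_{15}$ and $f$ are all $\mathbb{C}^\ast$-semi-invariant, so their vanishing or non-vanishing on an orbit closure is detected by evaluating at any single point, and (b) the $\iota$-fixed points of $Q_u$ that lie in the $(a:b)$-conic from Lemma~\ref{lemma:iota-fixed-points} give the entire family of orbit closures $\Theta_{a,b}$.

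For parts (i) and (ii), I would simply substitute the coordinates of \eqref{equation:C10-point} and \eqref{equation:C10-prime-point} into $h_3$, $h_{15}$ and $f$. Since $x = y = z = 1$ in both cases, $h_3 = 0$ is immediate, and $h_{15} = 0$ reduces via the relation $\vartheta^2 = (3u+1)(1-u)$ to an algebraic identity that one checks directly. The value of $f$ at either point is a rational function of $u$; for \eqref{equation:C10-point} it is nonzero for every $u \in \mathbb{C} \setminus \{0,1\}$, while for \eqref{equation:C10-prime-point} it vanishes exactly at $u = 2/3$, which corresponds to the claimed degeneration $\Psi^\prime = \Gamma$. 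Parts (iii) and (iv) then follow at once from Lemma~\ref{lemma:curves-in-V22-degree-10-12}(v).

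For the converse (v), I would substitute the general $\iota$-fixed point \eqref{equation:a-b-c-conic} into $h_3$ and $h_{15}$ directly. Reading off the computation of $p_9$ and $p_{21}$ from the proof of Lemma~\ref{lemma:curves-in-V22-degree-10-12} (and using $p_9 = f \cdot h_3$ evaluated at the point), one finds that modulo the factors corresponding to $\Theta_{a,b} \subset \mathcal{S}$, both $h_3$ and $h_{15}$ at the point are proportional to the quartic form $q_0(a,b)$. Therefore $\Theta_{a,b} \subset N_3 \cap N_{15}$ and $\Theta_{a,b} \not\subset \mathcal{S}$ is equivalent to $q_0(a,b) = 0$; the four roots of $q_0$ pair into two $\mathbb{C}^\ast$-orbits under the involution $(a:b) \mapsto $ the partner produced by $\lambda = -1$ (the only nontrivial $\mathbb{C}^\ast$-element preserving the $\iota$-fixed conic), and these two orbits are precisely $\Psi$ and $\Psi^\prime$.

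The main obstacle is the tangency analysis in (vi)--(viii). Here I would fix a general point $\Gamma_0 = (s_0^6:s_0^5 s_1:s_0^3 s_1^3:s_0 s_1^5:s_1^6)$ on $\Gamma$ and compute the gradients $\nabla h_3$, $\nabla h_{15}$, $\nabla f$, together with the gradient of the quadric \eqref{equation:quadric}, at $\Gamma_0$. Projecting onto the tangent space $T_{\Gamma_0} Q_u$ (i.e., modding out by the quadric's gradient) yields three tangent covectors, and the tangency of $N_3$ and $N_{15}$ along $\Gamma$ amounts to their proportionality. After clearing common monomial factors in $s_0, s_1$ (which are forced by the $\mathbb{C}^\ast$-weights $0,1,3,5,6$ of the coordinates), the proportionality condition collapses to a single polynomial equation in $u$ with unique root $u = 2/3$, giving (vi); comparing with the projected gradient of $f$ at this value of $u$ verifies (vii). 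For (viii), the same calculation is carried out with $\Gamma_0$ replaced by a point on $\Psi = \Psi^\prime$ at $u = -1/3$, and $G$-equivariance propagates the tangency from one point to the whole curve. The simplification that makes this tractable is working weight-by-weight under the $\mathbb{C}^\ast$-action, which forces most entries of the relevant matrices to vanish and reduces the linear algebra to a handful of scalar identities.
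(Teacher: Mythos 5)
Your handling of (i)--(iv) and of the tangency assertions (vi)--(viii) is essentially sound and close to the paper's own route: checking $h_3$, $h_{15}$ and $f$ at the explicit points \eqref{equation:C10-point} and \eqref{equation:C10-prime-point} (and using semi-invariance to pass from a point to its orbit closure) is a legitimate variant of the paper's computation, and your gradient comparison along $\Gamma$ is equivalent to the paper's comparison of linear parts of local equations at the point $(1:1:1:1:1)$, respectively at $(1:1:1:4:-8)$ for (viii), with $G$-equivariance spreading the conclusion along the curve.

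The genuine gap is in (v). After reducing, via Lemma~\ref{lemma:curves-in-V22-degree-10-12}(v) and division by the value of $f$, to the condition $q_0(a,b)=0$, you assert that the four roots of $q_0$ pair under $\lambda=-1$ into two orbits \emph{which are precisely $\Psi$ and $\Psi^\prime$}. That identification is the entire content of (v), and your counting delivers it only when $\Psi\neq\Psi^\prime$ and neither curve lies in $\mathcal{S}$, so that their four $\iota$-fixed points are four distinct parameters exhausting the root set of the quartic $q_0$. Exactly at the special values that matter elsewhere in the lemma the count does not close: at $u=-\frac{1}{3}$ one has $\Psi=\Psi^\prime$, so this single curve accounts for only two roots, and nothing in your argument excludes that the remaining roots of $q_0$ correspond to a further curve in $N_3\cap N_{15}$ not contained in $\mathcal{S}$ (you would have to verify that $q_0$ degenerates appropriately there, which you do not); at $u=\frac{2}{3}$ one has $\Psi^\prime=\Gamma\subset\mathcal{S}$, so again only two roots are accounted for, while (v) then asserts the stronger claim that $\Psi$ is the only admissible curve. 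Note also that the paper never uses the individual root structure of $q_0$ --- only its coprimality with the other $q_i$ --- so leaning on that structure makes the argument additionally fragile. The paper closes this step differently, and your proof of (i) skips precisely the ingredient needed: solving the system \eqref{equation:two-cubics} in the chart $x\neq 0$ yields the factorization $(t-y^5)\bigl(u^2t^2+(u^2-1)ty^5+(u-1)^2y^{10}\bigr)=0$, which shows set-theoretically that $N_3\cap N_{15}=\Gamma\cup\Psi\cup\Psi^\prime$; assertion (v) is then immediate from Lemma~\ref{lemma:curves-in-V22-degree-10-12}(v) with no case analysis. You should either add this elimination (which simultaneously proves (i)), or supply the missing analysis of the roots of $q_0$ at the degenerate parameters $u=-\frac{1}{3}$ and $u=\frac{2}{3}$.
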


\begin{proof}
Using \eqref{equation:quadric}, we see that the intersection $N_3\cap N_{15}$ is given in $\mathbb{P}^4$ by
\begin{equation}
\label{equation:two-cubics}
\left\{\aligned
&y^3-x^2z=0,\\
&t^3-zw^2=0,\\
&u(xw-z^2)+(z^2-yt)=0.\\
\endaligned
\right.
\end{equation}
In fact, this system of equation defines an effective one-cycle in $Q_u$ of degree $18$, which contains the curve $\Gamma$.

Let us show that $N_3\cap N_{15}$ contains the curves $\Psi$ and $\Psi^\prime$.
To do this, we may consider the subset where~\mbox{$x\ne 0$}, so that we let $x=1$.
Substituting $z=y^3$ and
$$
w=\frac{yt}{u}+\frac{u-1}{u}z^2
$$
into $t^3-zw^2=0$,
we obtain the equation
$$
\big(t-y^5\big)\Big(t^2u^2+(u^2-1)ty^5+(u-1)^2y^{10}\Big)=0.
$$
If $t=y^5$, we get the curve $\Gamma$.
Thus, the remaining part of the subset \eqref{equation:two-cubics} consists of the $\mathbb{C}^\ast$-orbits of the points
$$
\Big(1:1:1:t:\frac{t+u-1}{u}\Big)
$$
where $t$ is a solution of the quadratic equation
\begin{equation*}
u^2t^2+(u^2-1)t+(u-1)^2=0.
\end{equation*}
Solving this equation, we obtain exactly the points \eqref{equation:C10-point} and \eqref{equation:C10-prime-point}.
This shows that~\eqref{equation:two-cubics} contains the curves $\Psi$ and $\Psi^\prime$.
This proves assertion (i).

Observe that the intersection $\mathcal{S}\cap N_3$ consists of the curve $\Gamma$, the line $L_{2}$, and the line~\mbox{$y=z=w=0$}.
Similarly, the intersection $\mathcal{S}\cap N_{15}$ consists of the curve $\Gamma$, the line~$L_{1}$, and the line $x=z=t=0$.
Thus, the curve $\Psi$ is contained in $\mathcal{S}$ if and only if $\Psi=\Gamma$.
Since $\mathcal{S}$ is cut out on $Q_u$ by the equation $xw=yt$, we see that if $\Psi$ is contained in $\mathcal{S}$, then
$$
\frac{(u-1)(\vartheta-u-1)}{2u^2}=\frac{(u-1)(2u^2+\vartheta-u-1)}{2u^3}.
$$
Simplifying this equation, we get $\vartheta=\frac{3u^2-1}{u-1}$, which implies that $u=\frac{2}{3}$,
so that $\vartheta=1$ by assumption, which implies that the point~\eqref{equation:C10-point} is not contained in $\mathcal{S}$.
Hence, we see that~$\Psi$ is not contained in $\mathcal{S}$.
Similarly, we see that $\Psi^\prime$ is contained in $\mathcal{S}$ if and only if~\mbox{$u=\frac{2}{3}$}. This proves assertion (ii).

Since $\Psi$ is not contained in $\mathcal{S}$, we see that $\zeta(\Psi)$ is a curve of degree~$10$ by Lemma~\ref{lemma:curves-in-V22-degree-10-12}(v).
Similarly, if $u\ne\frac{2}{3}$, then $\Psi^\prime$ is not contained in $\mathcal{S}$, so that $\zeta(\Psi^\prime)$ is a curve of degree~$10$ by Lemma~\ref{lemma:curves-in-V22-degree-10-12}(v) as well.
This proves assertions (iii) and (iv).

If $\Theta_{a,b}$ is not contained in the surface $\mathcal{S}$ and $\zeta(\Theta_{a,b})$ is a curve of degree $10$,
then $\Theta_{a,b}$ is contained in $N_3\cap N_{15}$ by Lemma~\ref{lemma:curves-in-V22-degree-10-12}(v).
On the other hand, the intersection $N_3\cap N_{15}$ is given by  \eqref{equation:two-cubics}.
We just proved that this system of equation defines the union $\Gamma\cup\Psi\cup\Psi^\prime$,
so that either $\Theta_{a,b}=\Psi$ or $\Theta_{a,b}=\Psi^\prime$.
This proves assertion (v).

To prove assertions (vi) and (vii),
let us find the local equations of the surfaces $N_3$, $N_{15}$ and $\mathcal{S}$ at the point $(1:1:1:1:1)$.
We may work in a chart $x\ne 0$, so that we let~\mbox{$x=1$}.
Substituting $w=\frac{yt}{u}+\frac{u-1}{u}z^2$
into the equation $t^3-w^2z=0$ and multiplying the resulting equation by~$u^2$, we obtain the equation
$$
t^3u^2-t^2y^2z+2(1-u)tyz^3-(u-1)^2z^5=0.
$$
Similarly, the surface~$\mathcal{S}$ is given by $ty=z^2$, and the surface $N_3$ is given by $z=y^3$.
Now introducing new coordinates
$\bar{y}=y-1$, $\bar{z}=z-1$ and $\bar{t}=t-1$, we see that $N_{15}$ is given by
$$
2\bar{y}+(5u-4)\bar{z}+(2-3u)\bar{t}+\text{higher order terms}=0.
$$
Similarly, the surface $\mathcal{S}$ is given by
\begin{equation}
\label{equation:S-local}
\bar{y}-2\bar{z}+\bar{t}+\text{higher order terms}=0,
\end{equation}
while the linear term of the defining equation of the surface $N_3$ is $3\bar{y}-\bar{z}$.
Hence, the surface $N_3$ is not tangent to $\mathcal{S}$ at the point $(1:1:1:1:1)$.
Similarly, we see that the surface $N_3$ is tangent to $N_{15}$ at the point $(1:1:1:1:1)$ if and only if $u=\frac{2}{3}$.
This proves assertions (vi) and (vii).

To prove assertion (viii), we assume that $u=-\frac{1}{3}$.
Then $\Psi=\Psi^\prime$, and the point \eqref{equation:C10-point} is the point $(1:1:1:4:-8)$.
Arguing as above, we see that the local equations of the surfaces $N_3$ and $N_{15}$ at the point $(1:1:1:4:-8)$
have the same linear part (in coordinates $\bar{y}=y-1$, $\bar{z}=z-1$ and $\bar{t}=t-4$).
Hence, the surface $N_3$ is tangent to $N_{15}$ at the point $(1:1:1:4:-8)$.
This proves assertion~(viii) and completes the proof of the lemma.
\end{proof}

Recall from Remark~\ref{remark:flop} that the birational map $\zeta$ in \eqref{equation:V22} induces an isomorphism
$$
Q_v\setminus\mathcal{S}\cong V_u\setminus\mathcal{R}.
$$
Therefore, from \eqref{equation:zeta} and Lemmas~\ref{lemma:curves-in-V22-degree-10-12} and \ref{lemma:curves-in-V22-degree-10},
we obtain an explicit description of all irreducible $G$-invariant curves in the Fano threefold $V_u$ that are not contained in the surface~$\mathcal{R}$.
Thus, to classify all such curves in $V_u$, we need to describe those of them
that are contained in~$\mathcal{R}$. This will be done in the next section.

\section{Invariant curves in the surface $\mathcal{R}$}
\label{section:curves-2}

In this section we describe irreducible
$G$-invariant curves in the surface~$\mathcal{R}$, and complete the classification of
irreducible $G$-invariant curves in the threefold~$V_u$
(see Proposition~\ref{proposition:curves}).
We will show that $\mathcal{R}$ contains exactly two irreducible $G$-invariant curves,
one of which is the conic $\mathcal{C}_2$.
To describe the other curve, we analyze all irreducible $G$-invariant curves in surface $E_{Q_u}$.
We start with

\begin{remark}
\label{remark:flop-Gamma}
Recall from Remark~\ref{remark:dP4} that the surface $\mathcal{S}$ is smooth at every point of the curve $\Gamma$
except for the points  $(1:0:0:0:0)$ and $(0:0:0:0:1)$, which are isolated ordinary double singularities.
This implies that
$$
\widetilde{\mathcal{S}}\big\vert_{E_{Q_u}}=\widetilde{\Gamma}+\textbf{l}_1+\textbf{l}_2
$$
for some section $\widetilde{\Gamma}$ of the projection  $E_{Q_u}\to\Gamma$,
where $\textbf{l}_1$ and $\textbf{l}_2$ are the fibers of this projection over the points  $(1:0:0:0:0)$ and $(0:0:0:0:1)$, respectively.
The curve~$\widetilde{\Gamma}$ is irreducible and $G$-invariant.
Since $\widetilde{\Gamma}$ is contained in $\widetilde{\mathcal{S}}$, its image in $V_u$ is the conic $\mathcal{C}_2$.
\end{remark}

Now let us show that $E_{Q_u}$ contains exactly two irreducible $G$-invariant curves.

\begin{lemma}
\label{lemma:E-Q-u-two-curves}
The surface $E_{Q_u}$ contains exactly two irreducible $G$-invariant curves.
One of them is the curve $\widetilde{\Gamma}$ from Remark~\ref{remark:flop-Gamma}.
The second one is also a section of the projection~\mbox{$E_{Q_u}\to\Gamma$}.
\end{lemma}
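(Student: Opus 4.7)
Any irreducible $G$-invariant curve $C\subset E_{Q_u}$ must in fact be a section of the ruling $E_{Q_u}\to\Gamma$. Indeed, if $C$ were contained in a single fiber, it would equal that fiber, which would then have to lie over a $G$-fixed point of $\Gamma$; but the only $\mathbb{C}^*$-fixed points on $\Gamma$ are $(1:0:0:0:0)$ and $(0:0:0:0:1)$, and these are swapped by $\iota$ (cf.\ Lemma~\ref{lemma:points}), so $\Gamma$ has no $G$-fixed points. Thus $C$ dominates $\Gamma$. To see $C$ is actually a section, note that the $\mathbb{C}^*$-stabilizer of a generic point of $E_{Q_u}$ is contained in the $\mathbb{C}^*$-stabilizer of its image in $\Gamma$, which is trivial away from the two $\mathbb{C}^*$-fixed points. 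Thus a generic one-dimensional $\mathbb{C}^*$-orbit on $E_{Q_u}$ is isomorphic to $\mathbb{C}^*$ and its closure projects isomorphically onto $\Gamma$; the curve $C$ is the closure of such a generic orbit.

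The second step is to enumerate the $G$-invariant sections of the ruling. The $\mathbb{C}^*$-action on the fiber $N_{\Gamma/Q_u}|_{(1:0:0:0:0)}$ has weights $3$ and $5$, read off from the weights on the affine coordinates near $(1:0:0:0:0)$ in $\mathbb{P}^4$, so $N_{\Gamma/Q_u}$ splits $\mathbb{C}^*$-equivariantly as a direct sum of two line subbundles. The projectivizations of these subbundles yield two distinguished $\mathbb{C}^*$-invariant sections of $E_{Q_u}\to\Gamma$, while all other $\mathbb{C}^*$-invariant sections form a one-parameter family parametrized by $\mathbb{P}^1$ (the orbit space of $E_{Q_u}\setminus(\textbf{l}_1\cup\textbf{l}_2)$ under $\mathbb{C}^*$). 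The involution $\iota$ satisfies $\iota\lambda\iota^{-1}=\lambda^{-1}$ and swaps $\textbf{l}_1$ with $\textbf{l}_2$; it thus acts on the whole collection of $\mathbb{C}^*$-invariant sections. A case analysis for the $\iota$-action on the four $\mathbb{C}^*$-fixed points of $E_{Q_u}$ lying on $\textbf{l}_1\cup\textbf{l}_2$ shows that $\iota$ acts nontrivially on the $\mathbb{P}^1$-parameter, leaving precisely two fixed sections. One of these is $\widetilde{\Gamma}$ from Remark~\ref{remark:flop-Gamma}, which is manifestly $G$-invariant as the intersection of $\widetilde{\mathcal{S}}$ with $E_{Q_u}$, minus the two $\iota$-swapped fibers $\textbf{l}_1, \textbf{l}_2$; the second $G$-invariant section is the other $\iota$-fixed member of the family.

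The main obstacle is the final case analysis for the $\iota$-action: this requires identifying precisely which pairs of $\mathbb{C}^*$-fixed points on $\textbf{l}_1\cup\textbf{l}_2$ correspond to endpoints of $\mathbb{C}^*$-invariant sections, and how $\iota$ permutes these sections. Once this is pinned down, the conclusion that there are exactly two irreducible $G$-invariant curves on $E_{Q_u}$, both sections of the projection $E_{Q_u}\to\Gamma$, follows.
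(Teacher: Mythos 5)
Your first step (every irreducible $G$-invariant curve in $E_{Q_u}$ is a section of $E_{Q_u}\to\Gamma$, because $\Gamma$ has no $G$-fixed points and the generic $\mathbb{C}^\ast$-stabilizer is trivial) is correct and agrees with the paper. The gap is in the enumeration step. The lemma reduces to showing that $\iota$ acts \emph{nontrivially} on the $\mathbb{P}^1$ parametrizing the $\mathbb{C}^\ast$-invariant sections, and your proposed route --- a case analysis of how $\iota$ permutes the four $\mathbb{C}^\ast$-fixed points on $\textbf{l}_1\cup\textbf{l}_2$ --- does not in general decide this. Indeed, $\iota$ sends the weight-$3$ normal direction at $(1:0:0:0:0)$ to the weight-$(-3)$ direction at $(0:0:0:0:1)$ and the weight-$5$ direction to the weight-$(-5)$ one. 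In an equivariant splitting $N_{\Gamma/Q_u}=L\oplus L'$ the possible weight pairs for the subbundles are $(3,-5),(5,-3)$ (so $\deg L=\deg L'=8$) or $(3,-3),(5,-5)$ (so degrees $6$ and $10$). In the first, ``crossed'' configuration $\iota$ swaps the two distinguished sections and your argument closes; but in the second configuration $\iota$ maps each distinguished section to itself, and then the fixed-point data is perfectly compatible with $\iota$ acting \emph{trivially} on the whole one-parameter family, so no conclusion follows. By Remark~\ref{remark:u-2-3-u-2} the second configuration actually occurs, namely for $u=\frac{2}{3}$ and $u=2$ (where $E_{Q_u}\cong\mathbb{F}_4$), so your method breaks down exactly in the two most delicate cases of the paper; moreover, even for general $u$ you would have to establish the global equivariant splitting type of $N_{\Gamma/Q_u}$, which cannot be read off from the weights $3,5$ at one fixed point (distinct fiber weights give a splitting of that fiber, not of the bundle), and you do not supply this computation.

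For comparison, the paper avoids the splitting type altogether: it identifies the family of invariant sections with the fiber $\textbf{l}$ of $E_{Q_u}\to\Gamma$ over the $\iota$-fixed point $(1:1:1:1:1)$, and proves that $\iota$ acts nontrivially on $\textbf{l}$ by intersecting $\textbf{l}$ with the proper transforms of the $\iota$-conjugate pairs of surfaces $(N_3,N_{15})$, $(N_5,N_{13})$, $(N_8,N_{10})$ containing $\Gamma$: non-tangency of at least one pair along $\Gamma$ (which holds for every $u$, since $N_3,N_{15}$ are tangent only for $u=\frac{2}{3}$, $N_5,N_{13}$ only for $u=2$, and $N_8,N_{10}$ never) produces a point of $\textbf{l}$ moved by $\iota$. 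The two $\iota$-fixed points of $\textbf{l}$ then give the two $G$-invariant sections. If you want to salvage your approach, you would need to supplement the fixed-point analysis with an argument of this kind (or with a direct computation of the equivariant splitting), precisely in order to rule out the trivial action in the configuration with subbundle weights $(3,-3)$ and $(5,-5)$.
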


\begin{proof}
Let $\textbf{l}$ be the fiber of the natural projection $E_{Q_u}\to\Gamma$ over the point $(1:1:1:1:1)$.
Then $\textbf{l}\cong\mathbb{P}^1$ and the curve $\textbf{l}$ is $\iota$-invariant.
Thus, either $\iota$ fixes every point in $\textbf{l}$, or $\iota$ fixes exactly two points in $\textbf{l}$.
Let us show that the former case is impossible.
To do this, recall from \S\ref{section:explicit} that
$$
\Gamma\subset N_3\cap N_{5}\cap N_{8}\cap N_{10}\cap N_{13}\cap N_{15},
$$
and the surfaces $N_3$, $N_5$, $N_8$, $N_{10}$, $N_{13}$, $N_{15}$ are smooth at a general point of the curve~$\Gamma$.
Denote by $\widetilde{N}_3$, $\widetilde{N}_5$, $\widetilde{N}_8$, $\widetilde{N}_{10}$, $\widetilde{N}_{13}$ and $\widetilde{N}_{15}$
the proper transforms of the surfaces $N_3$, $N_5$, $N_8$, $N_{10}$, $N_{13}$ and $N_{15}$ on the threefold $\widetilde{Q}_u$, respectively.
Then each intersection
$$
\widetilde{N}_3\cap\textbf{l},\quad \widetilde{N}_5\cap\textbf{l},\quad \widetilde{N}_8\cap\textbf{l},\quad \widetilde{N}_{10}\cap\textbf{l},\quad \widetilde{N}_{13}\cap\textbf{l},\quad \widetilde{N}_{15}\cap\textbf{l}
$$
consists of a single point.
Moreover, if $u\ne\frac{2}{3}$, then $N_3$ is not tangent to $N_{15}$ at a general point of $\Gamma$ by Lemma~\ref{lemma:curves-in-V22-degree-10}(vi).
Hence, in this case, we have
$$
\widetilde{N}_3\cap\textbf{l}\ne\widetilde{N}_{15}\cap\textbf{l},
$$
so that the involution $\iota$ swaps these two points, since $\iota(N_3)=N_{15}$.
Thus, if $u\ne\frac{2}{3}$, then the involution $\iota$ acts on the curve $\textbf{l}$ non-trivially.

Recall that $\iota(N_5)=N_{13}$, the surface $N_5$ is cut out on $Q_u$ by $x^2t-y^2z=0$,
and the surface $N_5$ is cut out on $Q_u$ by $yw^2-zt^2=0$.
Let us find out when $N_5$ is tangent to~$N_{13}$ at a general point of $\Gamma$.
To do this, let us describe the local equations of the surfaces~$N_5$ and~$N_{13}$ at the point $(1:1:1:1:1)$.
We may work in a chart $x\ne 0$, so that we let~\mbox{$x=1$}.
Substituting
$$
w=\frac{yt}{u}+\frac{u-1}{u}z^2
$$
into $yw^2-zt^2=0$ and multiplying the resulting equation by $u^2$, we obtain the equation
$$
t^2y^3-u^2t^2z+2(u-1)ty^2z^2+(u-1)^2yz^4=0.
$$
This is the equation of $N_{13}$.
The equation of the surface $N_5$ is simply $t=y^2z$.
Now introducing new coordinates
$\bar{y}=y-1$, $\bar{z}=z-1$ and $\bar{t}=t-1$, we see that $N_{13}$ is given by
$$
(u+2)\bar{y}+(3u-4)\bar{z}+2(1-u)\bar{t}+\text{higher order terms}=0.
$$
Similarly, the surface $N_{13}$ is given by
$$
2\bar{y}+\bar{z}-\bar{t}+\text{higher order terms}=0.
$$
This implies that $N_5$ is tangent to $N_{13}$ at the point $(1:1:1:1:1)$ if and only if $u=2$.

Recall from Lemma~\ref{lemma:curves-in-V22-degree-10}(vi) that $N_3$ is tangent to $N_{15}$
at a general point of the curve $\Gamma$ if and only if $u=\frac{2}{3}$.
We see that $N_5$ is tangent to the surface $N_{13}$ at a general point of the curve $\Gamma$ if and only if $u=2$.
The same arguments imply that $N_8$ is never tangent to $N_{10}$ at a general point of the curve $\Gamma$.
Arguing as above, we see that $\iota$ acts on $\textbf{l}$ non-trivially as claimed.

Since $\iota$ acts non-trivially on the fiber $\textbf{l}$, it fixes two points in $\textbf{l}$.
One of them is the point $\textbf{l}\cap\widetilde{\mathcal{S}}$.
It is contained in $\widetilde{\Gamma}$, so that $\widetilde{\Gamma}$ is the closure of the $\mathbb{C}^\ast$-orbit of the point~$\textbf{l}\cap\widetilde{\mathcal{S}}$.
Similarly, the closure of the $\mathbb{C}^\ast$-orbit of the second fixed point of the involution $\iota$ is another irreducible $G$-invariant curve in $E_{Q_u}$.
Then every irreducible $G$-invariant curve in $E_{Q_u}$ must be one of these two curves.
Indeed, an irreducible $G$-invariant curve in $E_{Q_u}$ cannot be contracted by $\pi$, since $Q_u$ does not have $G$-fixed points.
Moreover,
since all $\mathbb{C}^\ast$-orbits in $E_{Q_u}$ that are not contained in
the fibers of the projection $E_{Q_u}\to\Gamma$ are its sections,
we conclude that an intersection of any irreducible $G$-invariant curve in $E_{Q_u}$
with~$\textbf{l}$ must consist of a $\iota$-invariant point,
which in turn uniquely determines this curve.
Since we proved that $\textbf{l}$ contains exactly two $\iota$-fixed points,
an irreducible $G$-invariant curve in~$E_{Q_u}$ must be the closure of the $\mathbb{C}^\ast$-orbit of one of these two points.
This completes the proof of the lemma.
\end{proof}

Thus, the surface $E_{Q_u}$ contains exactly two irreducible $G$-invariant curves.
One of them is the curve $\widetilde{\Gamma}$ from Remark~\ref{remark:flop-Gamma}.
The second curve can be described rather explicitly.

\begin{remark}
\label{remark:E-Q-u-two-curves}
Let us use the notation of the proof of Lemma~\ref{lemma:E-Q-u-two-curves}.
Recall from this proof that $\iota$ fixes exactly two points in $\textbf{l}$.
One of them is the point $\textbf{l}\cap\widetilde{\mathcal{S}}$.
To describe the second $\iota$-fixed point in $\textbf{l}$, denote by $M_{15}^\mu$ the surface in $Q_u$ that is cut out by the equation
$$
g_{15}^\prime+\mu g_{15}=0,
$$
where $\mu\in\mathbb{C}$.
Denote by $\widetilde{M}_{15}^\mu$ the proper transform of the surface $M_{15}^\mu$ on the threefold~$\widetilde{Q}_u$.
Then $M_{15}^\mu$ is singular along $\Gamma$ by Lemma~\ref{lemma:quintics}.
Moreover, it has a double point at a general point of $\Gamma$.
To determine its type, let us describe the local equation of the surface $M_{15}^\mu$ at the point $(1:1:1:1:1)$.
We may work in the chart~\mbox{$x\ne 0$}, so that we let~\mbox{$x=1$}.
Substituting~\mbox{$x=1$} and $w=\frac{yt}{u}+\frac{u-1}{u}z^2$ into $g_{15}^\prime+\mu g_{15}$ and multiplying the result by $u^2$, we obtain the polynomial
\begin{multline*}
u^2t^3+t^2y^5+(u^2\mu-2u\mu+\mu+u-4)t^2y^2z+\\
+2(u-1)ty^4z^2+(8-2u^2\mu+4u\mu-3u^2-2\mu-4u)tyz^3+\\
+(u-1)^2y^3z^4+(u^2\mu-2u\mu+u^2+\mu+3u-4)z^5.
\end{multline*}
Then introducing new coordinates
$\bar{y}=y-1$, $\bar{z}=z-1$ and $\bar{t}=t-1$, we rewrite this polynomial as
\begin{multline}
\label{equation:quadratic-part}
(\mu u^2-2\mu u+3u^2+\mu+u-3)\bar{t}^2+\\
+(2\mu u^2-4\mu u-3u^2+2\mu+8u-6)\bar{t}\bar{y}+(12-4\mu u^2+8\mu u-9u^2-4\mu-6u)\bar{t}\bar{z}+\\
+(\mu u^2-2\mu u+3u^2+\mu+7u-3)\bar{y}^2+(12-4\mu u^2+8\mu u+3u^2-4\mu-18u)\bar{y}\bar{z}+\\
+(4\mu u^2-8\mu u+7u^2+4\mu+8u-12)\bar{z}^2+\text{higher order terms}.
\end{multline}
If $\mu\ne-\frac{3u^2+16u-16}{4(u-1)^2}$, then the surface $M_{15}^\mu$ has an non-isolated ordinary double point at a general point of $\Gamma$.
Vice versa, if $\mu=-\frac{3u^2+16u-16}{4(u-1)^2}$, then the quadratic part of the polynomial~\eqref{equation:quadratic-part} simplifies as
$$
\frac{1}{4}\Big((2+3u)\bar{y}+4(u-1)\bar{z}+(2-3u)\bar{t}\Big)^2.
$$
Comparing it with~\eqref{equation:S-local}, we see that the intersection
$\widetilde{M}_{15}^\mu\cap\textbf{l}$ consists of a single point that is not contained in $\widetilde{\mathcal{S}}$.
This is the second point fixed in $\textbf{l}$ by the involution $\iota$.
\end{remark}

\begin{remark}
\label{remark:E-Q-u-2-3}
Suppose that $u=\frac{2}{3}$.
Let $\widetilde{Z}$ be an irreducible $G$-invariant curve contained in the surface $E_{Q_u}$ that is different from the curve $\widetilde{\Gamma}$.
Denote by $\widetilde{\Psi}$ the proper transform of the curve $\Psi$ on the threefold~$\widetilde{Q}_u$.
Let us use the notation from the proof of Lemma~\ref{lemma:E-Q-u-two-curves} and Remark~\ref{remark:E-Q-u-two-curves}.
Then
$$
\widetilde{N}_3\cap\widetilde{N}_{15}=\widetilde{Z}\cup\widetilde{\Psi}
$$
by Lemma~\ref{lemma:curves-in-V22-degree-10}(vi),
because $N_3$ is smooth at~the point \mbox{$(1:0:0:0:0)$}, and $N_{15}$ is smooth at~the point \mbox{$(0:0:0:0:1)$}.
Observe also that the curve $\widetilde{L}_{1}$ is contained in $\widetilde{N}_3$, and it is not contained in $\widetilde{N}_{15}$.
Similarly, the curve $\widetilde{L}_{2}$ is contained in $\widetilde{N}_{15}$, and it is not contained in~$\widetilde{N}_{3}$.
Thus, since $\widetilde{N}_{15}\cdot\widetilde{L}_{1}=0$ and $\widetilde{N}_{3}\cdot\widetilde{L}_{2}=0$,
we see that $\widetilde{L}_{1}$ is disjoint from $\widetilde{N}_{15}$,
and~$\widetilde{L}_{2}$ is disjoint from $\widetilde{N}_{3}$.
Using \eqref{equation:V22} and \eqref{equation:zeta}, we see that
$$
T_{9}\cap T_{21}=\mathcal{C}_2\cup\zeta\big(\Psi\big)\cup\phi\circ\chi\big(\widetilde{Z}\big).
$$
Moreover, the surfaces $T_{9}$ and $T_{21}$ intersect transversally at a general point of the conic~$\mathcal{C}_2$,
since the surface $\widetilde{\mathcal{S}}$ does not contain the curves $\widetilde{Z}$ and $\widetilde{\Psi}$.
Furthermore, the curve $\zeta(\Psi)$ has degree $10$ by Lemma~\ref{lemma:curves-in-V22-degree-10}(iii).
Thus $\phi\circ\chi(\widetilde{Z})$ is also a curve of degree $10$.
\end{remark}

\begin{remark}
\label{remark:E-Q-u-2}
Suppose that $u=2$.
Let $\widetilde{Z}$ be an irreducible $G$-invariant curve contained in the surface $E_{Q_u}$ that is different from the curve $\widetilde{\Gamma}$.
Let us use the notation from the proof of Lemma~\ref{lemma:E-Q-u-two-curves} and Remark~\ref{remark:E-Q-u-two-curves}.
In the proof of Lemma~\ref{lemma:E-Q-u-two-curves},
we showed that both surfaces~$\widetilde{N}_{5}$ and~$\widetilde{N}_{13}$ contain the curve $\widetilde{Z}$.
On the other hand, we have
$$
N_{5}\cap N_{13}=\Gamma\cup\Delta\cup L_{1}\cup L_{2}.
$$
Moreover, the surfaces $N_5$ and $N_{13}$ are not tangent at a general point of the conic~$\Delta$.
This can be checked, for example, using local equations
of the surfaces $N_5$ and $N_{13}$ at the point~\mbox{$(1:0:2:0:2)$}.
Observe also that the surface $N_{5}$ is smooth at the point~\mbox{$(0:0:0:0:1)$},
and the surface $N_{13}$ is smooth at the point $(1:0:0:0:0)$.
Hence, we deduce that
$$
\widetilde{N}_{5}\cap\widetilde{N}_{13}=\widetilde{Z}\cup\widetilde{\Delta}\cup\widetilde{L}_{1}\cup\widetilde{L}_{2},
$$
where $\widetilde{\Delta}$ is the proper transform of the conic $\Delta$.
Moreover, the surfaces $\widetilde{N}_{5}$ and $\widetilde{N}_{13}$ intersect transversally at a general point of the curve~$\widetilde{Z}$.
Indeed, otherwise the curve~$\Gamma$ would be contained in the one-cycle $N_{5}\cdot N_{13}$ with multiplicity at least $3$,
which is impossible, since $H_{Q_u}\cdot N_{5}\cdot N_{13}=18$, and the one-cycle $N_{5}\cdot N_{13}$
also contains the conic $\Delta$ and the lines  $L_{1}$ and $L_{2}$.
Thus, keeping in mind that the curves $\widetilde{L}_{1}$ and $\widetilde{L}_{2}$ are contracted by $\alpha$,
we conclude that
$$
\alpha\big(\widetilde{N}_{5}\big)\cap\alpha\big(\widetilde{N}_{13}\big)=\alpha(\widetilde{Z})\cup\gamma(\Delta).
$$
On the other hand, the degree of the curve $\gamma(\Delta)$ is $4$, one has $-K_{Y_{u}}^3=16$ and
$$
\alpha\big(\widetilde{N}_{5}\big)\sim\alpha\big(\widetilde{N}_{13}\big)\sim -K_{Y_{u}}.
$$
This implies that $\alpha(\widetilde{Z})$ is a curve of degree $12$,
because $\alpha(\widetilde{N}_{5})$ and $\alpha(\widetilde{N}_{13})$ intersect transversally
at general points of the curves $\alpha(\widetilde{Z})$ and~$\gamma(\Delta)$.
Denote by $\widetilde{C}$ the proper transform of the curve $\widetilde{Z}$ on the threefold $\widetilde{V}_u$.
Then
\begin{multline*}
12=\mathrm{deg}\big(\alpha(\widetilde{Z})\big)=-K_{\widetilde{Q}_u}\cdot\widetilde{Z}=-K_{Y_{u}}\cdot\alpha(\widetilde{Z})=-K_{Y_{u}}\cdot\beta(\widetilde{C})=-K_{\widetilde{V}_u}\cdot\widetilde{C}=\\
=\Big(\phi^*\big(H_{V_u}\big)-E_{V_u}\Big)\cdot\widetilde{C}\leqslant\phi^*\big(H_{V_u}\big)\cdot\widetilde{C}=H_{V_u}\cdot\widetilde{C}=\mathrm{deg}\big(\phi(\widetilde{C})\big).
\end{multline*}
\end{remark}

We conclude our investigation of irreducible $G$-invariant curves in $E_{Q_u}$ by the following result,
which also completes the description of irreducible $G$-invariant curves in $V_u$ of degree~$10$ started in Lemma~\ref{lemma:curves-in-V22-degree-10} and Remark~\ref{remark:E-Q-u-2-3}.

\begin{lemma}
\label{lemma:E-Q-u-curves}
Let $\widetilde{Z}$ be an irreducible $G$-invariant curve contained in the surface $E_{Q_u}$.
Then one of the following two possibilities holds.
\begin{itemize}
\item The curve $\widetilde{Z}$ is the curve $\widetilde{\Gamma}$ from Remark~\ref{remark:flop-Gamma}.
The curve $\phi\circ\chi(\widetilde{Z})$ is the conic $\mathcal{C}_2$.
The degree of the curve $\alpha(\widetilde{Z})$ is at least $12$.

\item The curve $\widetilde{Z}$ is the unique  irreducible $G$-invariant
curve in $E_{Q_u}$ not contained in~$\widetilde{\mathcal{S}}$.
If $u\ne\frac{2}{3}$, then $\mathrm{deg}(\phi\circ\chi(\widetilde{Z}))\geqslant 12$.
If $u=\frac{2}{3}$, then $\mathrm{deg}(\phi\circ\chi(\widetilde{Z}))=10$,
and the curve $\phi\circ\chi(\widetilde{Z})$ is contained
in $T_{9}\cap T_{21}$.
\end{itemize}
\end{lemma}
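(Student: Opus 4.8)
The argument rests on Lemma~\ref{lemma:E-Q-u-two-curves}: the surface $E_{Q_u}$ contains exactly two irreducible $G$-invariant curves, namely the section $\widetilde{\Gamma}$ of Remark~\ref{remark:flop-Gamma}, which lies in $\widetilde{\mathcal{S}}$, and a second section $\widetilde{Z}^\prime$, which does not lie in $\widetilde{\mathcal{S}}$. Thus $\widetilde{Z}$ is one of these two curves — exactly the alternative of the statement — and I would treat the two cases separately.

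\textbf{The curve $\widetilde{\Gamma}$.} Since $\widetilde{\Gamma}\subset\widetilde{\mathcal{S}}$ and $\widetilde{\mathcal{S}}$ is the proper transform of $E_{V_u}$, which $\phi$ contracts onto $\mathcal{C}_2$, the image $\phi\circ\chi(\widetilde{\Gamma})$ lies in $\mathcal{C}_2$; it is not a point by Lemma~\ref{lemma:points}, hence equals $\mathcal{C}_2$. The morphism $\alpha$ is given by $|-K_{\widetilde{Q}_u}|$ and does not contract $\widetilde{\Gamma}$, so $\deg\alpha(\widetilde{\Gamma})=-K_{\widetilde{Q}_u}\cdot\widetilde{\Gamma}$. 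I would compute this intersection number from $\widetilde{\Gamma}=\widetilde{\mathcal{S}}|_{E_{Q_u}}-\textbf{l}_1-\textbf{l}_2$ (Remark~\ref{remark:flop-Gamma}), the relations $\widetilde{\mathcal{S}}\sim 2\pi^*H_{Q_u}-E_{Q_u}$ and $-K_{\widetilde{Q}_u}\sim 3\pi^*H_{Q_u}-E_{Q_u}$, and the intersection numbers on the blow-up of the degree-$6$ curve $\Gamma$, namely $(\pi^*H_{Q_u})^2\cdot E_{Q_u}=0$, $\pi^*H_{Q_u}\cdot E_{Q_u}^2=-6$ and $E_{Q_u}^3=-\deg N_{\Gamma/Q_u}=-16$; this gives $-K_{\widetilde{Q}_u}\cdot\widetilde{\Gamma}=12$, in particular at least $12$.

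\textbf{The curve $\widetilde{Z}^\prime$.} For $u=\tfrac{2}{3}$ this is Remark~\ref{remark:E-Q-u-2-3}: there $T_9\cap T_{21}=\mathcal{C}_2\cup\zeta(\Psi)\cup\phi\circ\chi(\widetilde{Z}^\prime)$, and since $\deg\zeta(\Psi)=10$ by Lemma~\ref{lemma:curves-in-V22-degree-10}(iii) and $H_{V_u}^3=22$, the curve $\phi\circ\chi(\widetilde{Z}^\prime)$ has degree $10$ and lies in $T_9\cap T_{21}$. For $u=2$ this is Remark~\ref{remark:E-Q-u-2}, whose computation yields $\deg(\phi\circ\chi(\widetilde{Z}^\prime))\geqslant -K_{\widetilde{V}_u}\cdot\widetilde{C}^\prime=12$, where $\widetilde{C}^\prime$ denotes the proper transform of $\widetilde{Z}^\prime$ on $\widetilde{V}_u$. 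For $u\notin\{\tfrac{2}{3},2\}$ I would argue along the same lines as in Remark~\ref{remark:E-Q-u-2}. Using $K_{\widetilde{V}_u}=\phi^*K_{V_u}+E_{V_u}$ one has
$$
\deg\big(\phi\circ\chi(\widetilde{Z}^\prime)\big)=\big(\phi^*H_{V_u}\big)\cdot\widetilde{C}^\prime=\big({-}K_{\widetilde{V}_u}+E_{V_u}\big)\cdot\widetilde{C}^\prime=-K_{\widetilde{Q}_u}\cdot\widetilde{Z}^\prime+E_{V_u}\cdot\widetilde{C}^\prime,
$$
since $\chi$ is crepant over $Y_u$ (so $-K_{\widetilde{V}_u}\cdot\widetilde{C}^\prime=-K_{\widetilde{Q}_u}\cdot\widetilde{Z}^\prime$); here $-K_{\widetilde{Q}_u}\cdot\widetilde{Z}^\prime=18-E_{Q_u}\cdot\widetilde{Z}^\prime$ because $\widetilde{Z}^\prime$ is a section of $E_{Q_u}\to\Gamma$, while $E_{V_u}\cdot\widetilde{C}^\prime\geqslant 0$ because $\widetilde{Z}^\prime\not\subset\widetilde{\mathcal{S}}$ forces $\widetilde{C}^\prime\not\subset E_{V_u}$. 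So the matter reduces to bounding $E_{Q_u}\cdot\widetilde{Z}^\prime$, i.e.\ to determining the numerical class of $\widetilde{Z}^\prime$ on $E_{Q_u}$, and for this I would use the explicit description from Remark~\ref{remark:E-Q-u-two-curves}: $\widetilde{Z}^\prime$ is the reduced curve cut out on $E_{Q_u}$ by the proper transform $\widetilde{M}_{15}^\mu$ of $M_{15}^\mu$ with $\mu=-\tfrac{3u^2+16u-16}{4(u-1)^2}$, whose tangent cone along $\Gamma$ is a perfect square for this $\mu$. Since the only possible irreducible components of the $G$-invariant cycle $\widetilde{M}_{15}^\mu|_{E_{Q_u}}$ are $\widetilde{\Gamma}$, $\widetilde{Z}^\prime$ and the two special fibres $\textbf{l}_1,\textbf{l}_2$, and $\widetilde{\Gamma}$ cannot occur, one gets $\widetilde{M}_{15}^\mu|_{E_{Q_u}}=2\widetilde{Z}^\prime+m(\textbf{l}_1+\textbf{l}_2)$ for some $m\geqslant 0$, which together with the class $\widetilde{M}_{15}^\mu|_{E_{Q_u}}\sim(5\pi^*H_{Q_u}-2E_{Q_u})|_{E_{Q_u}}$ yields $E_{Q_u}\cdot\widetilde{Z}^\prime=m+1$; one then checks that $m\leqslant 5$ when $u\ne\tfrac{2}{3}$.

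\textbf{Main obstacle.} The crux is this last inequality: controlling, for general $u$, the multiplicity $m$ with which the fibres over the two $\mathbb{C}^\ast$-fixed points of $\Gamma$ occur in $\widetilde{M}_{15}^\mu|_{E_{Q_u}}$ — equivalently, how much the proper transform of $M_{15}^\mu$ degenerates there. These two points are exactly where $N_3,N_{15}$ become tangent along $\Gamma$ (for $u=\tfrac{2}{3}$, Lemma~\ref{lemma:curves-in-V22-degree-10}(vi)) and where $N_5,N_{13}$ become tangent (for $u=2$, in the proof of Lemma~\ref{lemma:E-Q-u-two-curves}), so the computation should be organized so that precisely these two values are special. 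I expect either a specialization argument — $m$ is locally constant in $u$ away from a finite set of values, which one identifies — or a direct local computation at the two $\mathbb{C}^\ast$-fixed points, continuing the tangency analysis of \S\ref{section:curves} and of the proof of Lemma~\ref{lemma:E-Q-u-two-curves}, to be the way to close this. This is the heaviest part of the argument.
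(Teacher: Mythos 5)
Your treatment of $\widetilde{\Gamma}$ and of the two special values is fine: the case split via Lemma~\ref{lemma:E-Q-u-two-curves}, the computation $-K_{\widetilde{Q}_u}\cdot\widetilde{\Gamma}=12$, and the appeal to Remarks~\ref{remark:E-Q-u-2-3} and~\ref{remark:E-Q-u-2} for $u=\frac{2}{3}$ and $u=2$ all agree with the paper. The problem is the remaining case $u\notin\{\frac{2}{3},2\}$, which is the heart of the lemma and which you yourself flag as open. Your reduction is: writing $\widetilde{M}_{15}^{\mu}\vert_{E_{Q_u}}=2\widetilde{Z}+m(\textbf{l}_1+\textbf{l}_2)$ you get $E_{Q_u}\cdot\widetilde{Z}=m+1$, hence $\deg(\phi\circ\chi(\widetilde{Z}))\geqslant -K_{\widetilde{Q}_u}\cdot\widetilde{Z}=17-m$, and you need $m\leqslant 5$. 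No proof of $m\leqslant 5$ is offered, only two candidate strategies; so as it stands the proposal does not prove the statement. Worse, the reduction asks for more than the paper ever establishes: $m\leqslant 5$ is equivalent to the purely local bound $-K_{\widetilde{Q}_u}\cdot\widetilde{Z}\geqslant 12$, whereas the paper only proves $-K_{\widetilde{Q}_u}\cdot\widetilde{Z}\geqslant 11$ (by first showing, via the tangency analysis of the pairs $N_3,N_{15}$ and $N_5,N_{13}$ and $N_8,N_{10}$, that $E_{Q_u}\cong\mathbb{P}^1\times\mathbb{P}^1$ for $u\ne\frac{2}{3},2$, and then excluding $\widetilde{Z}\sim\textbf{s}$ using the uniqueness statement of Lemma~\ref{lemma:E-Q-u-two-curves}). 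The last unit of degree in the paper does \emph{not} come from the local geometry along $\Gamma$ at all: the value $\deg(\phi\circ\chi(\widetilde{Z}))=11$ is excluded by a global argument on $V_u$, namely that $\widetilde{Z}$ lies on none of the surfaces $\widetilde{N}_3,\widetilde{N}_5,\widetilde{N}_8,\widetilde{N}_{10},\widetilde{N}_{13},\widetilde{N}_{15}$ and its image lies on none of $T_{12},T_{13},T_{17},T_{18}$, so a degree-$11$ image would have to lie on $T_{10}\cap T_{20}$; then the $G$-invariant one-cycle $T_{10}\cdot T_{20}$ of degree $22$ would contain $\zeta(\Delta)+\zeta(\Upsilon)+\phi\circ\chi(\widetilde{Z})$ plus a residual $G$-invariant line, contradicting Lemma~\ref{lemma:curves-in-V22-degree-10-12}. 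Since $\deg(\phi\circ\chi(\widetilde{Z}))=(17-m)+E_{V_u}\cdot\widetilde{C}$ with $E_{V_u}\cdot\widetilde{C}$ possibly positive, the truth of the lemma does not even guarantee $m\leqslant 5$; and Remark~\ref{remark:u-minus-2} (the case $u=-2$, where the image really does lie on $T_{10}\cap T_{20}$) indicates this borderline situation can occur, so a purely local computation of $m$ at the two $\mathbb{C}^\ast$-fixed points may simply not close the argument.

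So the gap is genuine: the inequality you defer ("one then checks that $m\leqslant 5$") is exactly the content of the lemma in the main case, it is not established, and it is not clear it is the right quantity to bound. To repair the argument along the paper's lines you would need two further ingredients that are absent from your sketch: the determination of the Hirzebruch type of $E_{Q_u}$ for $u\ne\frac{2}{3},2$ (which uses the tangency computations for $N_5,N_{13}$ and $N_8,N_{10}$ in the proof of Lemma~\ref{lemma:E-Q-u-two-curves}, not only the $N_3,N_{15}$ case), and the global step on $V_u$ excluding degrees $10$ and $11$ via the surfaces $T_i$, the cycle $T_{10}\cdot T_{20}$, and the non-existence of $G$-invariant lines.
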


\begin{proof}
The normal bundle of the smooth rational curve $\Gamma$ in $Q_u$ is isomorphic
to~\mbox{$\mathcal{O}_{\mathbb{P}^1}(p)\oplus\mathcal{O}_{\mathbb{P}^1}(q)$}
for some integers $p$ and $q$ such that $p\geqslant q$ and $p+q=16$.
Thus, the exceptional surface~$E_{Q_u}$ is a Hirzebruch surface $\mathbb{F}_n$ for $n=p-q\geqslant 0$.
Denote by $\textbf{s}$ the section of the natural projection $E_{Q_u}\to\Gamma$ such that $\textbf{s}^2=-n$. Then
$-E_{Q_u}\vert_{E_{Q_u}}\sim\textbf{s}+\kappa\textbf{l}$
for some integer $\kappa$. One has
$$
-16=E_{Q_u}^3=\Big(\textbf{s}+\kappa\textbf{l}\Big)^2=-n+2\kappa,
$$
so that $\kappa=\frac{n-16}{2}$.
This implies that $\widetilde{\mathcal{S}}\vert_{E_{Q_u}}\sim\textbf{s}+\frac{n+8}{2}\textbf{l}$.
On the other hand, it follows from Remark~\ref{remark:flop-Gamma} that
$\widetilde{\mathcal{S}}\vert_{E_{Q_u}}=\widetilde{\Gamma}+\textbf{l}_1+\textbf{l}_2$,
where $\textbf{l}_1$ and $\textbf{l}_2$ are the fibers of the natural projection $E_{Q_u}\to\Gamma$ over the points  $(1:0:0:0:0)$ and~\mbox{$(0:0:0:0:1)$}, respectively.
This gives $\widetilde{\Gamma}\sim\textbf{s}+\frac{n+4}{2}\textbf{l}$,
which implies, in particular, that $\widetilde{\Gamma}\ne\textbf{s}$. Hence, we have
$$
0\leqslant\widetilde{\Gamma}\cdot\textbf{s}=\Big(\textbf{s}+\frac{n+4}{2}\textbf{l}\Big)\cdot\textbf{s}=\frac{4-n}{2},
$$
which implies that $n\leqslant 4$.
Thus, we compute
\begin{equation}
\label{equation:curve-degree-intersection}
\mathrm{deg}\big(\alpha(\widetilde{Z})\big)=-K_{\widetilde{Q}_u}\cdot\widetilde{Z}=\Big(3\pi^*\big(H_{Q_u}\big)-E_{Q_u}\Big)\cdot\widetilde{Z}
=\Big(\textbf{s}+\frac{n+20}{2}\textbf{l}\Big)\cdot\widetilde{Z}.
\end{equation}
In particular, if $\widetilde{Z}=\widetilde{\Gamma}$, then \eqref{equation:curve-degree-intersection} gives
$$
\mathrm{deg}\big(\alpha(\widetilde{Z})\big)=
\Big(\textbf{s}+\frac{n+20}{2}\textbf{l}\Big)\cdot\Big(\textbf{s}+\frac{n+4}{2}\textbf{l}\Big)=12.
$$

Let $\widetilde{C}$ be the proper transform of the curve $\widetilde{Z}$ on the threefold $\widetilde{V}_u$, and let $C=\phi(\widetilde{C})$.
If $\widetilde{Z}\ne\widetilde{\Gamma}$, then
\begin{multline}\label{eq:deg-alpha-Z-vs-deg-C}
\mathrm{deg}\big(\alpha(\widetilde{Z})\big)=-K_{\widetilde{Q}_u}\cdot\widetilde{Z}=-K_{Y_{u}}\cdot\alpha(\widetilde{Z})=-K_{Y_{u}}\cdot\beta(\widetilde{C})=-K_{\widetilde{V}_u}\cdot\widetilde{C}=\\
=\Big(\phi^*\big(H_{V_u}\big)-E_{V_u}\Big)\cdot\widetilde{C}\leqslant\phi^*\big(H_{V_u}\big)\cdot\widetilde{C}=H_{V_u}\cdot\widetilde{C}=\mathrm{deg}(C).
\end{multline}

Now let us use the notation from the proof of Lemma~\ref{lemma:E-Q-u-two-curves} and Remark~\ref{remark:E-Q-u-two-curves}.
To complete the proof, we may assume that $\widetilde{Z}$ is the closure of the $\mathbb{C}^\ast$-orbit of the point~\mbox{$\widetilde{M}_{15}^\mu\cap\textbf{l}$.}
Then~$\widetilde{Z}$ is contained in $\widetilde{M}_{15}^\mu$,
it is a section of the natural projection $E_{Q_u}\to\Gamma$,
and it is not contained in $\widetilde{\mathcal{S}}$.
In particular, we have $\widetilde{Z}\ne\widetilde{\Gamma}$.

By Remarks~\ref{remark:E-Q-u-2-3} and \ref{remark:E-Q-u-2}, we may assume that $u\ne\frac{2}{3}$ and $u\ne 2$.
This implies that~$n=0$, cf. Remark~\ref{remark:u-2-3-u-2} below.
Indeed, suppose that $n>0$. Then $\widetilde{Z}=\textbf{s}$ by Lemma~\ref{lemma:E-Q-u-two-curves}, because the curve $\textbf{s}$ is clearly $G$-invariant.
Then it follows from \eqref{equation:curve-degree-intersection} that
$$
\mathrm{deg}\big(\alpha(\widetilde{Z})\big)=-K_{\widetilde{Q}_u}\cdot\widetilde{Z}=\frac{20-n}{2}<10.
$$
Hence, at least one surface among
$\widetilde{N}_{3}$, $\widetilde{N}_{5}$, $\widetilde{N}_{8}$, $\widetilde{N}_{10}$, $\widetilde{N}_{13}$ and $\widetilde{N}_{15}$ contains the curve~$\widetilde{Z}$.
Since $\iota(\widetilde{N}_{3})=\widetilde{N}_{15}$, $\iota(\widetilde{N}_{5})=\widetilde{N}_{13}$ and
$\iota(\widetilde{N}_{8})=\widetilde{N}_{10}$, this implies that
$\widetilde{Z}$ is contained in at least one of the intersections
$\widetilde{N}_{3}\cap\widetilde{N}_{15}$,
$\widetilde{N}_{5}\cap\widetilde{N}_{13}$,
$\widetilde{N}_{8}\cap\widetilde{N}_{10}$.
On the other hand, it follows from Lemma~\ref{lemma:curves-in-V22-degree-10}(vi) that $N_3$ is tangent to $N_{15}$
at a general point of the curve $\Gamma$ if and only if $u=\frac{2}{3}$.
Since we assumed that $u\ne\frac{2}{3}$, we see that
$$
\widetilde{Z}\not\subset\widetilde{N}_{3}\cap\widetilde{N}_{15}.
$$
Likewise, the surface $N_5$ is tangent to the surface $N_{13}$
at a general point of the curve $\Gamma$ if and only if $u=2$.
We showed this in the proof of Lemma~\ref{lemma:E-Q-u-two-curves}.
Similar computations imply that the surface $N_8$ is not tangent to $N_{10}$ at a general point of the curve $\Gamma$.
Therefore, the curve $\widetilde{Z}$ is contained neither in $\widetilde{N}_{5}\cap\widetilde{N}_{13}$ nor in $\widetilde{N}_{8}\cap\widetilde{N}_{10}$.
The obtained contradiction shows that the case $n>0$ is impossible, so that $n=0$.

Since $n=0$, one has $E_{Q_u}\cong\mathbb{P}^1\times\mathbb{P}^1$.
By \eqref{equation:curve-degree-intersection}, we have
$$
-K_{\widetilde{Q}_u}\cdot\widetilde{Z}=\Big(\textbf{s}+10\textbf{l}\Big)\cdot\widetilde{Z}\geqslant\Big(\textbf{s}+10\textbf{l}\Big)\cdot\textbf{s}=10.
$$
This also shows that $-K_{\widetilde{Q}_u}\cdot\widetilde{Z}=10$ if and only if $\widetilde{Z}\sim\textbf{s}$.
However, this case is impossible.
Indeed, if $\widetilde{Z}\sim\textbf{s}$, then the linear system $|\textbf{s}|$ contains at least two irreducible $G$-invariant curves.
On the other hand, we already know from Lemma~\ref{lemma:E-Q-u-two-curves} that
$\widetilde{Z}$ and $\widetilde{\Gamma}\sim\textbf{s}+2\textbf{l}$ are the only irreducible $G$-invariant curves in the surface $E_{Q_u}$.
Hence, using~\eqref{eq:deg-alpha-Z-vs-deg-C} we conclude that
$\mathrm{deg}(C)\geqslant-K_{\widetilde{Q}_u}\cdot\widetilde{Z}\geqslant 11$.

Using Lemma~\ref{lemma:curves-in-V22-degree-10-12}, we see that $V_u$ does not contain
irreducible $G$-invariant curves of degree $1$, $3$, $5$, $7$, $8$ and $9$.
In particular, the threefold $V_u$ does not contain $G$-invariant lines,
which also follows from \cite[Lemma~4.1(i)]{KuznetsovProkhorov}.

By Remark~\ref{remark:pencil-base-locus-easy},
there exists a unique surface in the pencil generated by $T_{15}$ and $T_{15}^\prime$ that contains $C$.
In fact, we know this surface from Remark~\ref{remark:E-Q-u-two-curves}.
It is the image of the surface~$\widetilde{M}_{15}^\mu$ from Remark~\ref{remark:E-Q-u-two-curves},
where $\mu=-\frac{3u^2+16u-16}{4(u-1)^2}$.
Thus, if $\mathrm{deg}(C)=11$, there should be at least one surface among
$T_9$, $T_{10}$, $T_{11}$, $T_{12}$, $T_{13}$, $T_{14}$, $T_{16}$, $T_{17}$, $T_{18}$, $T_{19}$, $T_{20}$, $T_{21}$ that also contains $C$.
But we proved above that none of the surfaces $\widetilde{N}_{3}$, $\widetilde{N}_{5}$, $\widetilde{N}_{8}$, $\widetilde{N}_{10}$, $\widetilde{N}_{13}$, $\widetilde{N}_{15}$
contains the curve $\widetilde{Z}$, so that the surfaces $T_9$, $T_{11}$, $T_{14}$, $T_{16}$, $T_{19}$ and $T_{21}$ do not contain $C$ either.
Similarly, the surfaces  $T_{12}$, $T_{13}$,  $T_{17}$ and $T_{18}$ do not contain the curve $C$,
because the surfaces $H_x$, $H_y$, $H_z$, $H_t$ and $H_w$ do not contain the curve $\Gamma$.
Thus, to complete the proof, we may assume that either $T_{10}$ or $T_{20}$ contains the curve $C$.
Actually, this assumption implies that both surfaces $T_{10}$ and $T_{20}$ contain the curve $C$, since $\iota(T_{10})=T_{20}$.
Note that this case is indeed possible when $u=-2$ by Remark~\ref{remark:u-minus-2} below.

By Lemma~\ref{lemma:curves-in-V22-degree-10-12}, both surfaces $T_{10}$ and $T_{20}$ contain the curves $\zeta(\Delta)$ and $\zeta(\Upsilon)$,
the degree of the curve $\zeta(\Delta)$ is $4$,
and the degree of the curve $\zeta(\Upsilon)$ is $6$.
Since we already know that $\mathrm{deg}(C)\geqslant 11$,
we see that the $G$-invariant one-cycle $T_{10}\cdot T_{20}$ consists of the curves $\zeta(\Delta)$,
$\zeta(\Upsilon)$, $C$ and a $G$-invariant curve of degree $12-\mathrm{deg}(C)$.
Since $V_u$ does not contain $G$-invariant lines, we see that
$$
T_{10}\cdot T_{20}=\zeta(\Delta)+\zeta(\Upsilon)+C,
$$
so that $\mathrm{deg}(C)=12$. This completes the proof of the lemma.
\end{proof}

\begin{remark}
\label{remark:u-2-3-u-2}
If $u\ne\frac{2}{3}$ and $u\ne 2$, then $E_{Q_u}\cong\mathbb{P}^1\times\mathbb{P}^1$,
so that the normal bundle of the curve $\Gamma$ in the quadric $Q_u$ is isomorphic to $\mathcal{O}_{\mathbb{P}^1}(8)\oplus\mathcal{O}_{\mathbb{P}^1}(8)$.
We showed this in the proof of Lemma~\ref{lemma:E-Q-u-curves}.
Vice versa, if $u=\frac{2}{3}$ or $u=2$, then, arguing as in the proof of Lemma~\ref{lemma:E-Q-u-curves}, one can show that $E_{Q_u}\cong\mathbb{F}_{4}$,
so that the normal bundle of the curve $\Gamma$ is $\mathcal{O}_{\mathbb{P}^1}(6)\oplus\mathcal{O}_{\mathbb{P}^1}(10)$ in this case. However, we will not use this information in the sequel.
\end{remark}

\begin{remark}
\label{remark:u-minus-2}
Denote by $\widetilde{M}_{10}$ and $\widetilde{M}_{20}$ the proper transform of the surfaces $M_{10}$ and $M_{20}$ on the threefold $\widetilde{Q}_u$, respectively.
Recall that both $M_{10}$ and $M_{20}$ has quadratic singularity at the point $(1:1:1:1:1)$.
Substituting $x=1$ and $w=\frac{yt}{u}+\frac{u-1}{u}z^2$ into the polynomial~\mbox{$ug_{10}$}, we obtain the polynomial
$ut^2+ty^5-(2u+1)y^2zt+(u-1)y^4z^2+yz^3$.
The quadratic part of its local expansion at the point $(1:1:1:1:1)$ is
$$
u\bar{t}^2+(3-4u)\bar{y}\bar{t}-(2u+1)\bar{t}\bar{z}+(4u+3)\bar{y}^2+(4u-7)\bar{y}\bar{z}+(u+2)\bar{z}^2,
$$
where $\bar{y}=y-1$, $\bar{z}=z-1$ and $\bar{t}=t-1$.
Similarly, substituting $x=1$ and $w=\frac{yt}{u}+\frac{u-1}{u}z^2$ into the polynomial $u^3g_{20}$, we obtain the polynomial
\begin{multline*}
u^3t^4+t^3y^5-(2u^2+u)t^3y^2z+(3u-3)t^2y^4z^2+(-2u^3+u^2+2u)t^2yz^3+\\
+(3u^2-6u+3)ty^3z^4+(u^2-u)tz^5+(u^3-3u^2+3u-1)y^2z^6.
\end{multline*}
Then the quadratic part of the local expansion of the polynomial $u^2g_{20}$ is
\begin{multline*}
(4u^2-5u+2)\bar{t}^2+(4-4u^2-u)\bar{y}\bar{t}-(12u^2-17u+8)\bar{t}\bar{z}+\\
+(u^2+4u+2)\bar{y}^2+(6u^2-u-8)\bar{y}\bar{z}+(9u^2-14u+8)\bar{z}^2.
\end{multline*}
Both these quadric forms are degenerate,
so that they define reducible conics in~$\mathbb{P}^2_{\bar{y},\bar{z},\bar{t}}$.
\mbox{If~$u\ne-2$,} then these conics do not have common components.
However, if $u=-2$, then the former quadratic form is $(\bar{t}-5\bar{y})(\bar{y}+3\bar{z}-2\bar{t})$,
and the latter quadratic form is $4(\bar{y}-12\bar{z}+7\bar{t})(\bar{y}+3\bar{z}-2\bar{t})$.
Note that the quadratic part of the polynomial \eqref{equation:quadratic-part} is a multiple of $(\bar{y}+3\bar{z}-2\bar{t})^2$.
Thus, if $u=-2$, then $\widetilde{M}_{10}\cap\widetilde{M}_{20}$ contains the irreducible $G$-invariant
curve in $E_{Q_u}$ that is different from the curve $\widetilde{\Gamma}$, see Remark~\ref{remark:flop-Gamma}.
\end{remark}

Recall that $\zeta(\mathcal{S})=\mathcal{C}_2$.
Denote the curves $\zeta(\Delta)$ and $\zeta(\Upsilon)$ by $\mathcal{C}_4$ and $\mathcal{C}_6$, respectively.
Similarly, if $u\ne\frac{2}{3}$, let $\mathcal{C}_{10}=\zeta(\Psi)$ and $\mathcal{C}_{10}^\prime=\zeta(\Psi^\prime)$.
Finally, if $u=\frac{2}{3}$, let $\mathcal{C}_{10}=\zeta(\Psi)$ and let $\mathcal{C}_{10}^\prime=\phi\circ\chi(\widetilde{Z})$,
where $\widetilde{Z}$ is the irreducible $G$-invariant
curve in $E_{Q_u}$ that is different from the curve $\widetilde{\Gamma}$.

\begin{proposition}
\label{proposition:curves}
Let $C$ be an irreducible $G$-invariant curve in $V_u$ with $\mathrm{deg}(C)<12$.
Then either $C=\mathcal{C}_2$, or $C=\mathcal{C}_4$, or $C=\mathcal{C}_6$, or $C=\mathcal{C}_{10}$, or $C=\mathcal{C}_{10}^\prime$.
\end{proposition}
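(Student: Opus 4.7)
The argument splits according to whether $C$ is contained in the surface $\mathcal{R}$.

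First I consider the case $C\not\subset\mathcal{R}$. By Remark~\ref{remark:flop} the birational map $\zeta$ restricts to a $G$-equivariant isomorphism $Q_u\setminus\mathcal{S}\cong V_u\setminus\mathcal{R}$, so $C=\zeta(\Theta)$ for a uniquely determined irreducible $G$-invariant curve $\Theta\subset Q_u$ with $\Theta\not\subset\mathcal{S}$. By the observation before Lemma~\ref{lemma:iota-fixed-points}, $\Theta$ is the closure of a $\mathbb{C}^\ast$-orbit through an $\iota$-fixed point, so by Lemma~\ref{lemma:iota-fixed-points} it is either $\Theta_\pm$ or some $\Theta_{a,b}$; by Lemma~\ref{lemma:curves-in-S4}, $\Theta\neq\Gamma$ and $\Theta\neq\Theta_{1,0}$. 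I now apply Lemma~\ref{lemma:curves-in-V22-degree-10-12}: if $\Theta=\Theta_\pm$ then $\mathrm{deg}(C)=12$, contradicting the hypothesis; if $\Theta=\Delta$ then $C=\mathcal{C}_4$; if $\Theta=\Upsilon$ then $C=\mathcal{C}_6$; in all remaining cases $\mathrm{deg}(C)\in\{10,12\}$, so the hypothesis forces $\mathrm{deg}(C)=10$, and Lemma~\ref{lemma:curves-in-V22-degree-10}(v) yields $\Theta\in\{\Psi,\Psi'\}$. If $u\neq\frac{2}{3}$ this gives $C\in\{\mathcal{C}_{10},\mathcal{C}_{10}'\}$ directly. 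If $u=\frac{2}{3}$, the point~\eqref{equation:C10-prime-point} specializes to $(1:1:1:1:1)$ so $\Psi'=\Gamma\subset\mathcal{S}$, and only $\Theta=\Psi$ remains, giving $C=\mathcal{C}_{10}$.

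Next I consider the case $C\subset\mathcal{R}$. If $C=\mathcal{C}_2$ we are done, so assume $C\neq\mathcal{C}_2$. Let $\widetilde{C}\subset\widetilde{V}_u$ be the strict transform of $C$ under $\phi$; this is a $G$-invariant irreducible curve contained in $\widetilde{\mathcal{R}}$ and not contained in $E_{V_u}$. By Remark~\ref{remark:flop} the flopping curves $\widetilde{\ell}_1,\widetilde{\ell}_2$ of $\chi^{-1}$ are swapped by~$\iota$, hence are not individually $G$-invariant; in particular $\widetilde{C}$ is neither of them, so the flop carries $\widetilde{C}$ to a $G$-invariant irreducible curve $\chi^{-1}(\widetilde{C})\subset\widetilde{Q}_u$. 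Since $\widetilde{\mathcal{R}}$ is the strict transform of $E_{Q_u}$ under $\chi$, the curve $\chi^{-1}(\widetilde{C})$ lies in $E_{Q_u}$, and by Lemma~\ref{lemma:E-Q-u-two-curves} it equals either $\widetilde{\Gamma}$ or the unique other $G$-invariant curve $\widetilde{Z}$ in $E_{Q_u}$. But $\widetilde{\Gamma}\subset\widetilde{\mathcal{S}}$, and $\widetilde{\mathcal{S}}$ is the strict transform of $E_{V_u}$ under $\chi$, so $\phi\circ\chi(\widetilde{\Gamma})=\mathcal{C}_2\neq C$. Therefore $\chi^{-1}(\widetilde{C})=\widetilde{Z}$. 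Now Lemma~\ref{lemma:E-Q-u-curves} applied to $\widetilde{Z}$ concludes the argument: if $u\neq\frac{2}{3}$ it gives $\mathrm{deg}(C)=\mathrm{deg}(\phi\circ\chi(\widetilde{Z}))\geqslant 12$, contradicting $\mathrm{deg}(C)<12$ and showing that no such $C$ exists; if $u=\frac{2}{3}$ it gives $\mathrm{deg}(C)=10$, and $C=\phi\circ\chi(\widetilde{Z})=\mathcal{C}_{10}'$ by definition.

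The main difficulty I expect lies in the second case: one must transport the irreducibility and $G$-invariance of $C$ across the small birational modification $\chi^{-1}$ in a way compatible with both identifications $\widetilde{\mathcal{R}}\leftrightarrow E_{Q_u}$ and $\widetilde{\mathcal{S}}\leftrightarrow E_{V_u}$. This in turn relies on the precise statement of Remark~\ref{remark:flop} that the two flopping curves are swapped by the involution rather than each being $G$-invariant on its own, and on the degree lower bound of Lemma~\ref{lemma:E-Q-u-curves}, whose proof absorbs all the intersection-theoretic work on $E_{Q_u}$ (including the distinction between the generic $u$ and the two special values $u=\frac{2}{3}$, $u=2$).
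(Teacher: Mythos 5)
Your proof is correct and follows essentially the same route as the paper's: both reduce the statement to Lemmas~\ref{lemma:curves-in-V22-degree-10-12}, \ref{lemma:curves-in-V22-degree-10}, \ref{lemma:E-Q-u-two-curves} and \ref{lemma:E-Q-u-curves} by transporting $C$ through the link \eqref{equation:V22}, using that the flopping curves are swapped by $\iota$ and that $Q_u$ has no $G$-fixed points. The only (cosmetic) difference is bookkeeping: you split according to whether $C\subset\mathcal{R}$ on the $V_u$ side, while the paper pushes the curve all the way down to $Q_u$ and splits according to whether its image lies in $\mathcal{S}$.
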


\begin{proof}
We may assume that $C\ne \mathcal{C}_2$.
Denote by $\widetilde{C}$ the proper transform of the curve $C$ on the threefold $\widetilde{V}_{u}$.
By Remark~\ref{remark:flop}, the curve $\widetilde{C}$ is not flopped by $\chi^{-1}$.
Denote by $\widetilde{Z}$ the proper transform of the curve $\widetilde{C}$ on the threefold $\widetilde{Q}_u$.
Then $\widetilde{Z}$ is not contracted by $\pi$, since $Q_u$ does not have $G$-fixed points by Lemma~\ref{lemma:points}.

Let $Z=\pi(\widetilde{Z})$.
Then $Z$ is an irreducible $G$-invariant curve.
Hence, the curve $Z$ is either the curve $\Theta_{\pm}$,
or the curve $\Theta_{a,b}$ for some $(a:b)\in\mathbb{P}^1$.
Therefore, if $Z$ is not contained in $\mathcal{S}$, the required assertion follows from Lemmas~\ref{lemma:curves-in-V22-degree-10-12} and \ref{lemma:curves-in-V22-degree-10}.
Thus, we may assume that $Z\subset\mathcal{S}$, which implies that $Z=\Gamma$, because $C\ne\mathcal{C}_2$ by assumption.
This simply means that $\widetilde{Z}$ is contained in the exceptional surface $E_{Q_u}$.
Then $u=\frac{2}{3}$ and $Z=\mathcal{C}_{10}^\prime$ by Lemma~\ref{lemma:E-Q-u-curves}.
\end{proof}

Using Remark~\ref{remark:C-2-T-i} and Lemmas~\ref{lemma:curves-in-V22-degree-10} and \ref{lemma:E-Q-u-curves}, we see that
\begin{equation}
\label{equation:T9-T21}
T_9\cdot T_{21}=\mathcal{C}_{10}+\mathcal{C}_{10}^\prime+\mathcal{C}_2.
\end{equation}

\section{Anticanonical pencil}
\label{section:pencil}

Let $\mathcal{P}_{Q_u}$ be the pencil of surfaces in $|5H_{Q_u}|$ that are cut out on $Q_u$ by
$$
\mu_0 g_{15}+\mu_1 g_{15}^{\prime}=0,
$$
where $(\mu_0:\mu_1)\in\mathbb{P}^1$.
Here $g_{15}$ is the polynomial of weight $15$ in \eqref{equation:3-5-6-7-8-9-10-11-12-13-15},
and $g_{15}^{\prime}$ is the polynomial
of weight $15$ in~\eqref{equation:10-15-20}.
Then the pencil $\mathcal{P}_{Q_u}$ is free from base components.

Denote by $\mathcal{P}_{V_u}$ the proper transform of the pencil $\mathcal{P}_{Q_u}$ on the threefold $V_u$.
Then $\mathcal{P}_{V_u}$ is generated by the irreducible surfaces $T_{15}$ and $T_{15}^\prime$,
and it contains all $G$-invariant surfaces in the linear system $|-K_{V_u}|$.
This follows from \eqref{equation:zeta}.

By Lemma~\ref{lemma:l-1-l-2-T-i},
the base locus of the pencil $\mathcal{P}_{V_u}$ contains the lines $\ell_1$ and $\ell_2$ from Remark~\ref{remark:flop}.
Similarly, we know from Lemma~\ref{lemma:curves-in-V22-degree-10-12}(i) that the base locus of the pencil $\mathcal{P}_{V_u}$ contains the curve $\zeta(\Theta_{\pm})$.
Thus, using Remark~\ref{remark:pencil-base-locus-easy} and Proposition~\ref{proposition:curves}, we obtain

\begin{corollary}
\label{corollary:pencil-base-locus}
The curve $\zeta(\Theta_{\pm})$ is the only irreducible $G$-invariant curve in $V_u$ which is contained in the base locus of the pencil $\mathcal{P}_{V_u}$.
\end{corollary}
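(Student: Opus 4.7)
The plan is to combine the already-established degree bound on components of $T_{15}\cap T_{15}^\prime$ from Remark~\ref{remark:pencil-base-locus-easy} with the complete low-degree classification in Proposition~\ref{proposition:curves}. Since $T_{15}$ and $T_{15}^\prime$ generate the pencil $\mathcal{P}_{V_u}$, its scheme-theoretic base locus equals $T_{15}\cap T_{15}^\prime$, so every irreducible $G$-invariant curve in the base locus appears as an irreducible component of the one-cycle $T_{15}\cdot T_{15}^\prime$.

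First I would record that the total degree of the effective one-cycle $T_{15}\cdot T_{15}^\prime$ in $V_u\subset\mathbb{P}^{13}$ equals $(-K_{V_u})^3=22$. By Lemma~\ref{lemma:curves-in-V22-degree-10-12}(i) the curve $\zeta(\Theta_\pm)$ of degree $12$ is a component, and by Lemma~\ref{lemma:l-1-l-2-T-i} so are the two lines $\ell_1$ and $\ell_2$, each of degree $1$. These account for degree at least $14$, so every other irreducible component of $T_{15}\cdot T_{15}^\prime$ has degree at most $8$; this is exactly the content of Remark~\ref{remark:pencil-base-locus-easy}.

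Next I would invoke Proposition~\ref{proposition:curves}: every irreducible $G$-invariant curve in $V_u$ of degree strictly less than $12$ is one of $\mathcal{C}_2$, $\mathcal{C}_4$, $\mathcal{C}_6$, $\mathcal{C}_{10}$, or $\mathcal{C}_{10}^\prime$. The two curves $\mathcal{C}_{10}$ and $\mathcal{C}_{10}^\prime$ have degree $10>8$, so they cannot appear as components of $T_{15}\cdot T_{15}^\prime$ distinct from $\zeta(\Theta_\pm)$. Therefore any irreducible $G$-invariant curve in the base locus other than $\zeta(\Theta_\pm)$ must be $\mathcal{C}_2$, $\mathcal{C}_4$, or $\mathcal{C}_6$.

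Finally I would close the argument by quoting the last sentences of Remark~\ref{remark:pencil-base-locus-easy}, which explicitly verify that none of $\mathcal{C}_2$, $\mathcal{C}_4$, $\mathcal{C}_6$ is contained in $T_{15}\cap T_{15}^\prime$. This leaves $\zeta(\Theta_\pm)$ as the only possibility and completes the proof. There is no real obstacle here: the corollary is essentially a bookkeeping statement that packages the numerical bound from Remark~\ref{remark:pencil-base-locus-easy} together with the classification of Proposition~\ref{proposition:curves}, both of which have been proved earlier.
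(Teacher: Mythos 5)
Your proof is correct and is essentially the paper's own argument: the paper deduces the corollary precisely by combining the degree bound and the exclusion of $\mathcal{C}_2$, $\mathcal{C}_4$, $\mathcal{C}_6$ from Remark~\ref{remark:pencil-base-locus-easy} with the classification of Proposition~\ref{proposition:curves}, exactly as you do. Your write-up merely spells out the bookkeeping (degree $22$ of the cycle $T_{15}\cdot T_{15}^\prime$ minus $12+1+1$ for $\zeta(\Theta_{\pm})$, $\ell_1$, $\ell_2$) that the paper leaves implicit inside that remark.
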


Therefore, for every irreducible $G$-invariant curve in $V_u$ that is different from $\zeta(\Theta_{\pm})$,
there exists a unique surface in the pencil $\mathcal{P}_{V_u}$ that contains this curve.
In particular, the pencil $\mathcal{P}_{V_u}$ contains a unique surface that passes through $\mathcal{C}_4$,
and it contains a unique surface that passes through $\mathcal{C}_6$.
Below we describe both of them.

\begin{lemma}
\label{lemma:T-15-prime}
The curve $\mathcal{C}_6$ is not contained in $T_{15}^\prime$.
On the other hand, the curve $\mathcal{C}_4$ is contained in $T_{15}^\prime$.
Moreover, the surface $T_{15}^\prime$ is singular along the curve $\mathcal{C}_4$.
If $u\ne 2$, then~$T_{15}^\prime$ has a non-isolated ordinary double point at a general point of the curve~$\mathcal{C}_4$.
If~\mbox{$u=2$}, then $T_{15}^\prime$ has a non-isolated ordinary triple point at general point of the curve~$\mathcal{C}_4$.
\end{lemma}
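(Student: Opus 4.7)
The plan is to carry out all five assertions by direct computation on $Q_u$ and transport them to $V_u$ using Remark~\ref{remark:flop}. Since neither $\Delta$ nor $\Upsilon$ is contained in the surface $\mathcal{S}$, the birational map $\zeta$ is a local analytic isomorphism at the generic point of each of them, so every claim below is equivalent to the corresponding statement about the affine surface $M_{15}^\prime$ at the generic point of $\Delta$ or $\Upsilon$.

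For the first assertion, substituting $x=w=0$ into $g_{15}^\prime$ leaves only $-(u+4)y^2zt^2+(4-4u)yz^3t$, and on $\Upsilon$ one has the additional relation $yt=(1-u)z^2$ coming from \eqref{equation:quadric}. Using this relation reduces the restriction to $u(u-1)yz^3t$. Since $u\notin\{0,1\}$ this does not vanish identically on $\Upsilon$, so $\Upsilon\not\subset M_{15}^\prime$ and hence $\mathcal{C}_6\not\subset T_{15}^\prime$. For the second assertion, every monomial of $g_{15}^\prime$ is divisible by $y$ or by $t$, so $g_{15}^\prime$ vanishes identically on the plane $y=t=0$; in particular $\Delta\subset M_{15}^\prime$ and $\mathcal{C}_4\subset T_{15}^\prime$.

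To describe the singularity of $M_{15}^\prime$ along $\Delta$, I would work in the affine chart $x=1$, where solving \eqref{equation:quadric} gives $w=((u-1)z^2+yt)/u$ and hence local coordinates $(y,z,t)$ on $Q_u$. Substituting this expression for $w$ into $g_{15}^\prime$, multiplying by $u^2$ and dividing by $u-1$ (which is nonzero by assumption), a straightforward expansion shows that the local equation of $M_{15}^\prime$ in this chart is
\begin{equation*}
u^2t^3-u(u-2)yz^3t-u(u+2)y^2zt^2+(u-1)^2y^3z^4+2(u-1)y^4z^2t+y^5t^2=0.
\end{equation*}
At a generic point $(1:0:z_0:0:(u-1)z_0^2/u)$ of $\Delta$, with $z_0\ne 0$, the part of the left-hand side of degree $\leqslant 2$ in the normal coordinates $(y,t)$ equals $-u(u-2)z_0^3\,yt$. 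For $u\ne 2$ this is a non-degenerate rank-$2$ quadratic form, so $M_{15}^\prime$ has a non-isolated ordinary double point along $\Delta$ at its generic point, and the corresponding statement for $T_{15}^\prime$ along $\mathcal{C}_4$ follows from the local isomorphism property of $\zeta$.

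When $u=2$ the quadratic part vanishes, and the terms of degree $\leqslant 3$ in $(y,t)$ simplify to $4t^3+z_0^4y^3$. This binary cubic has nonzero discriminant, so it splits into three distinct linear factors over $\mathbb{C}$; therefore $M_{15}^\prime$ has three smooth analytic branches meeting pairwise transversally at the generic point of $\Delta$, that is, a non-isolated ordinary triple point, and the same conclusion transfers to $T_{15}^\prime$ along $\mathcal{C}_4$. The only step that requires genuine care is the algebraic substitution producing the displayed equation, together with tracking the cancellation that produces the overall factor $u-1$; everything else is immediate from inspection of the leading terms.
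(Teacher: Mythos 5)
Your proof is correct and follows essentially the same route as the paper: restrict $g_{15}^\prime$ to $\Delta$ and $\Upsilon$ for the containment statements, then compute the local equation of $M_{15}^\prime$ in an affine chart along $\Delta$, read off the quadratic part $-u(u-2)yt$ for $u\ne 2$ and the cubic part $4t^3+z_0^4y^3$ for $u=2$, and transfer to $T_{15}^\prime$ via the isomorphism $Q_u\setminus\mathcal{S}\cong V_u\setminus\mathcal{R}$. The only cosmetic difference is that you work in the chart $x=1$ while the paper uses $w=1$ (the two are exchanged by the involution $\iota$), and your displayed local equation agrees with the paper's up to this swap.
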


\begin{proof}
Recall from \eqref{equation:10-15-20} that
$$
g_{15}^\prime=(u-1)x^2t^3+(u-1)y^3w^2-(u+4)y^2zt^2+(3u+2)xyztw+(4-4u)yz^3t.
$$
Substituting \eqref{equation:Upsilon} into $g_{15}^\prime$, we see that $\Upsilon$ is not contained in $M_{15}^\prime$,
so that $\mathcal{C}_6$ is not contained in $T_{15}^\prime$.
Similarly, substituting \eqref{equation:Delta} into $g_{15}^\prime$, we see that $\Delta$ is contained in $M_{15}^\prime$,
so that $\mathcal{C}_4$ is contained in $T_{15}^\prime$.

To describe the singularity of the surface $T_{15}^\prime$ at a general point of the curve~$\mathcal{C}_4$,
it is enough to describe the singularity of the surface $M_{15}^\prime$ at a general point of the curve~$\Delta$.
The latter point has the form $(\frac{u-1}{u}\tau^2:0:\tau:0:1)$ with $\tau\in\mathbb{C}^\ast$.
Substituting $w=1$ and~\mbox{$x=z^2+\frac{ty-z^2}{u}$} into $g_{15}^\prime=0$ and
multiplying the resulting equation by $\frac{u^2}{u-1}$, we obtain
\begin{equation}
\label{equation:g-15-prime-C-4}
-u(u-2)tyz^3+u^2y^3+(u-1)^2t^3z^4-u(u+2)t^2y^2z+2(u-1)t^4yz^2+t^5y^2=0.
\end{equation}
Thus, at a general point of the curve $\mathcal{C}_4$, the surface $M_{15}^\prime$ has singularity locally isomorphic
to the product of $\mathbb{C}$ and the germ of the curve singularity given by
$$
-u(u-2)ty+u^2y^3+(u-1)^2t^3-u(u+2)t^2y^2+2(u-1)t^4y+t^5y^2=0.
$$
If $u\ne 2$, the quadratic part  $-u(u-2)ty$ of the left hand side is non-degenerate,
so that~$M_{15}^\prime$ has a non-isolated ordinary double point at $P$.
If $u=2$, the above equation becomes $t^3+4y^3-8t^2y^2+2t^4y+t^5y^2=0$,
which defines an ordinary triple point (also known as curve singularity of type $\mathbf{D}_4$),
and the assertion follows.
\end{proof}

\begin{corollary}
\label{corollary:2-3}
If $u=2$, then $\alpha_G(V_u)\leqslant\frac{2}{3}$.
\end{corollary}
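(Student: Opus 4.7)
The plan is to mimic the proof of Corollary 2.3 almost verbatim, replacing the role of $\mathcal{R}$ there by the surface $T_{15}^\prime$. Concretely, I would produce a single $G$-invariant effective divisor in $|-K_{V_u}|$ whose multiplicity along a smooth $G$-invariant curve is high enough to force the log canonical threshold to be at most $\tfrac{2}{3}$.

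First, I record the relevant facts about $T_{15}^\prime$ when $u=2$. The surface $T_{15}^\prime$ is a member of the anticanonical pencil $\mathcal{P}_{V_u}$, hence $T_{15}^\prime\sim -K_{V_u}$, and it is $G$-invariant because $g_{15}^\prime$ is $\mathbb{C}^\ast$-semi-invariant of weight $15$ and $\iota$-invariant. By Lemma~5.1, when $u=2$ the surface $T_{15}^\prime$ has a non-isolated ordinary triple point (a curve singularity of type $\mathbf{D}_4$) at a general point of $\mathcal{C}_4$; equivalently,
$$
\mathrm{mult}_{\mathcal{C}_4} T_{15}^\prime = 3.
$$
Note also that $\mathcal{C}_4=\zeta(\Delta)$ is a rational normal curve of degree $4$ by Lemma~3.6(ii), so it is smooth, and of course $V_u$ is smooth along $\mathcal{C}_4$.

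Next, I would invoke \cite[Proposition~9.5.13]{Lazarsfeld} applied to the smooth subvariety $\mathcal{C}_4$ of codimension $2$ in the smooth threefold $V_u$ to conclude
$$
\mathrm{lct}\bigl(V_u, T_{15}^\prime\bigr) \leqslant \frac{\mathrm{codim}_{V_u}\mathcal{C}_4}{\mathrm{mult}_{\mathcal{C}_4} T_{15}^\prime} = \frac{2}{3}.
$$
Equivalently, the log pair $(V_u, \lambda T_{15}^\prime)$ fails to be log canonical along $\mathcal{C}_4$ for every rational $\lambda>\tfrac{2}{3}$.

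Finally, taking $n=1$ and the one-element linear system $\mathcal{D}=\{T_{15}^\prime\}\subset|-K_{V_u}|$ in Demailly's description \eqref{equation:Demailly} of $\alpha_G(V_u)$ immediately gives $\alpha_G(V_u)\leqslant \tfrac{2}{3}$, as required. There is no real obstacle: the only non-formal input is the multiplicity computation, which was already carried out in Lemma~5.1, where the local equation \eqref{equation:g-15-prime-C-4} specializes at $u=2$ to a cubic with a $\mathbf{D}_4$ singularity.
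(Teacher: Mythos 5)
Your proposal is correct and is essentially the paper's own argument: the corollary is stated as an immediate consequence of the preceding lemma on $T_{15}^\prime$, the point being exactly that for $u=2$ the $G$-invariant anticanonical surface $T_{15}^\prime$ has multiplicity $3$ along the smooth curve $\mathcal{C}_4$, so by the same multiplicity bound (\cite[Proposition~9.5.13]{Lazarsfeld}) used for the bound $\alpha_G(V_u)\leqslant\frac{4}{5}$, the pair $(V_u,\lambda T_{15}^\prime)$ is non--log canonical for $\lambda>\frac{2}{3}$, whence $\alpha_G(V_u)\leqslant\frac{2}{3}$ by \eqref{equation:Demailly}.
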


Let $g_{15}^{\prime\prime}=ug_{15}+g_{15}^\prime$. Then
$$
g_{15}^{\prime\prime}=(u-1)x^2t^3+(u-1)y^3w^2-4y^2zt^2+(u+2)xyztw-4(u-1)yz^3t+ux^2zw^2.
$$
Denote by $M_{15}^{\prime\prime}$
the surface in the quadric $Q_u$ that is cut out by $g_{15}^{\prime\prime}=0$.
Let $T_{15}^{\prime\prime}$ be its proper transform on the threefold $V_u$.
Then $T_{15}^{\prime\prime}$ is an irreducible surface in $\mathcal{P}_{V_u}$.

\begin{lemma}
\label{lemma:T-15-prime-prime}
The curve $\mathcal{C}_4$ is not contained in $T_{15}^{\prime\prime}$.
On the other hand, the curve $\mathcal{C}_6$ is contained in $T_{15}^{\prime\prime}$.
Moreover, the surface $T_{15}^{\prime\prime}$ is singular along the curve $\mathcal{C}_6$.
If $u\ne\frac{3}{4}$, then~$T_{15}^{\prime\prime}$ has a non-isolated ordinary double point at a general point of the curve~$\mathcal{C}_6$.
If~\mbox{$u=\frac{3}{4}$}, then~$T_{15}^{\prime\prime}$ has a non-isolated tacnodal singularity at a general point of the curve~$\mathcal{C}_6$.
\end{lemma}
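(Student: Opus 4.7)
The plan follows closely the template of the preceding lemma on $T_{15}^{\prime}$. For the two inclusion claims, I first substitute $y=t=0$ (cf.\ \eqref{equation:Delta}) into $g_{15}^{\prime\prime}$, which kills every term except $ux^{2}zw^{2}$, so the restriction is not identically zero on $\Delta$ and hence $\mathcal{C}_{4}\not\subset T_{15}^{\prime\prime}$. Substituting instead $x=w=0$ (cf.\ \eqref{equation:Upsilon}) gives
$$
g_{15}^{\prime\prime}\bigl|_{x=w=0}=-4y^{2}zt^{2}-4(u-1)yz^{3}t=-4yzt\bigl(yt+(u-1)z^{2}\bigr),
$$
and on the plane $x=w=0$ the quadric equation \eqref{equation:quadric} simplifies precisely to $yt+(u-1)z^{2}=0$, so the restriction vanishes on $\Upsilon$ and $\mathcal{C}_{6}\subset T_{15}^{\prime\prime}$.

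To analyze the singularities of $M_{15}^{\prime\prime}$ along $\Upsilon$, I would work in the affine chart $t=1$, use the quadric relation to eliminate $y$ via $y=uxw+(1-u)z^{2}$, and parameterize a general point of $\Upsilon$ as $(0,(1-u)\sigma^{2},\sigma,1,0)$ with $\sigma\in\mathbb{C}^{\ast}$. Introducing local coordinates $\bar{x}=x$, $\bar{z}=z-\sigma$, $\bar{w}=w$ and Taylor-expanding $g_{15}^{\prime\prime}$ term by term, I expect the constant and linear parts to vanish (so $M_{15}^{\prime\prime}$ is indeed singular along $\Upsilon$), the $\bar{z}^{2}$ contributions coming from the summands $-4B^{2}z$ and $-4(u-1)Bz^{3}$ (where $B=uxw+(1-u)z^{2}=y$) to cancel exactly (consistent with $\bar{z}$ being the tangent direction to $\Upsilon$), and the surviving quadratic part to be a form in $\bar{x}$ and $\bar{w}$ alone reading
$$
(u-1)\bar{x}^{2}+(1-u)(2-3u)\sigma^{3}\bar{x}\bar{w}-(u-1)^{4}\sigma^{6}\bar{w}^{2}.
$$

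The discriminant of this transverse quadratic form works out to $(u-1)^{2}\sigma^{6}\cdot u^{2}(4u-3)$, which for $u\notin\{0,1\}$ vanishes precisely when $u=\frac{3}{4}$. Hence for $u\neq\frac{3}{4}$ the form is non-degenerate and $M_{15}^{\prime\prime}$ acquires a non-isolated ordinary double point at a general point of $\mathcal{C}_{6}$. When $u=\frac{3}{4}$ the same form collapses to the perfect square $-\tfrac{1}{256}\bigl(8\bar{x}+\sigma^{3}\bar{w}\bigr)^{2}$, so the transverse germ is at least of type $A_{n}$ with $n\geq 3$.

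The main remaining obstacle, and really the only substantive step, is to rule out $A_{n}$ with $n\geq 4$ at $u=\frac{3}{4}$ and confirm that we land precisely on a tacnode. For this I would extend the Taylor expansion one more order, pass to new coordinates $\xi=8\bar{x}+\sigma^{3}\bar{w}$ and $\eta=\bar{w}$ in which the quadratic part becomes $-\tfrac{1}{256}\xi^{2}$, and check that the restriction of the degree-four part of the local equation to the slice $\xi=0$ is a nonzero multiple of $\eta^{4}$. This reduces to the non-vanishing of a single explicit numerical coefficient at $u=\frac{3}{4}$, after which the classical normal form $\xi^{2}+\eta^{4}=0$ identifies the transverse germ as a tacnode and the singularity along $\mathcal{C}_{6}$ as non-isolated tacnodal.
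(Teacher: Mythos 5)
Your route is the same as the paper's: verify the two containments by restricting $g_{15}^{\prime\prime}$ to $y=t=0$ and to $x=w=0$, pass to the chart $t=1$, eliminate $y$ via the quadric relation $y=uxw+(1-u)z^{2}$, and read off the transverse quadratic form along $\Upsilon$. Everything you actually compute is correct: the two restrictions, the quadratic form $(u-1)\bar{x}^{2}+(u-1)(3u-2)\sigma^{3}\bar{x}\bar{w}-(u-1)^{4}\sigma^{6}\bar{w}^{2}$ (the paper records it after dividing by $u-1$), its discriminant $(u-1)^{2}\sigma^{6}u^{2}(4u-3)$, and the collapse to $-\tfrac{1}{256}(8\bar{x}+\sigma^{3}\bar{w})^{2}$ at $u=\tfrac34$. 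The ordinary-double-point conclusion for $u\ne\tfrac34$ is therefore fine.

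The genuine shortfall is that the one step which actually needed new work at $u=\tfrac34$ --- identifying the transverse germ as $\mathbf{A}_3$ rather than $\mathbf{A}_{n}$ with $n\geqslant 4$ --- is left as a plan, and the test you propose is, as stated, not quite sufficient: with quadratic part $\xi^{2}$, the order of the germ along $\xi=0$ is computed only after completing the square, so a cubic cross term $\xi\eta^{2}$ could cancel the $\eta^{4}$ coefficient you want to inspect. Two remarks close this. First, one should really analyze the plane germ in the slice $z=\sigma$ (equivalently, use the $\mathbb{C}^{\ast}$-invariance of $M_{15}^{\prime\prime}$ to trivialize along $\Upsilon$, which is what the paper does implicitly); in the full three-variable expansion the cubic part does contain $\bar{z}$-terms, whereas in the slice every monomial of the substituted equation has even degree in $(\bar{x},\bar{w})$ (degrees $2,4,6,8$), so there is no cubic part at all and your naive test becomes valid. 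Second, the test then succeeds: restricting the quartic part $3u(u-1)^{2}\sigma^{4}\bar{x}\bar{w}^{3}-3u\sigma\bar{x}^{2}\bar{w}^{2}$ (at $u=\tfrac34$, after dividing by $u-1$) to $8\bar{x}+\sigma^{3}\bar{w}=0$ gives $-\tfrac{27}{512}\sigma^{7}\bar{w}^{4}\neq 0$, so the germ is a tacnode; this is exactly the paper's explicit expansion after the substitution $w=v-8x$, where the decisive nonzero term is $-13824x^{4}$. So your proof is the paper's proof with the final, decisive coefficient left unverified; once you add the parity observation and this one-line computation, it is complete.
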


\begin{proof}
Substituting \eqref{equation:Delta} into $g_{15}^{\prime\prime}$,
we see that $\Delta\not\subset M_{15}^{\prime\prime}$,
so that $\mathcal{C}_4\not\subset T_{15}^{\prime\prime}$.
Similarly, substituting \eqref{equation:Upsilon} into $g_{15}^{\prime\prime}$,
we see that $\Upsilon\subset M_{15}^{\prime\prime}$,
so that $\mathcal{C}_6\subset T_{15}^{\prime\prime}$.

To describe the singularity of the surface $T_{15}^{\prime\prime}$ at a general point of the curve~$\mathcal{C}_6$,
it is enough to describe the singularity of the surface $M_{15}^{\prime\prime}$  at a general point of the curve $\Upsilon$.
The latter point has the form $P=(0:(1-u)\tau^2:\tau:1:0)$ with $\tau\in\mathbb{C}^\ast$.

Substituting $t=1$ and $y=z^2+u(wx-z^2)$ into $g_{15}^{\prime\prime}=0$ and
dividing the resulting equation by $(u-1)$, we obtain
$$
x^2+(3u-2)z^3xw-(u-1)^3w^2z^6+3u(u-1)^2z^4xw^3-3uw^2x^2z-3u^2(u-1)z^2x^2w^4+u^3w^5x^3=0.
$$
Thus, at a general point of the curve $\mathcal{C}_6$, the surface $M_{15}^{\prime\prime}$
has singularity locally isomorphic to the product of $\mathbb{C}$ and the germ of the curve singularity given by
$$
x^2+(3u-2)xw-(u-1)^3w^2+3u(u-1)^2xw^3-3uw^2x^2-3u^2(u-1)x^2w^4+u^3w^5x^3=0.
$$
If $u\ne\frac{3}{4}$, the quadratic part  $x^2+(3u-2)xw-(u-1)^3w^2$ of the left hand side is non-degenerate,
so that $M_{15}^{\prime\prime}$ has a non-isolated ordinary double point at $P$.
If $u=\frac{3}{4}$, the above equation becomes $w^2+16wx+64x^2+9w^3x-144w^2x^2+27w^4x^2+27w^5x^3=0$.
So,~introducing new auxiliary coordinates $w=v-8x$, we get
\begin{multline*}
v^2-13824x^4+4032vx^3+110592x^6-360v^2x^2+\\
+9v^3x-55296vx^5+10368v^2x^4-884736x^8+552960vx^7-864v^3x^3+\\
+27v^4x^2-138240v^2x^6+17280v^3x^5-1080v^4x^4+27v^5x^3=0.
\end{multline*}
This equation defines a tacnodal point (also known as curve singularity of type $\mathbf{A}_3$),
and the assertion follows.
\end{proof}

\begin{corollary}
\label{corollary:3-4}
If $u=\frac{3}{4}$, then $\alpha_G(V_u)\leqslant\frac{3}{4}$.
\end{corollary}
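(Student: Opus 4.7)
The plan is to apply the description of $\alpha_G(V_u)$ in~\eqref{equation:Demailly} with $n=1$ and the one-element linear system $\mathcal{D}=\{T_{15}^{\prime\prime}\}$. Since $T_{15}^{\prime\prime}$ belongs to the $G$-invariant pencil $\mathcal{P}_{V_u}$ and satisfies $T_{15}^{\prime\prime}\sim -K_{V_u}$, it is a $G$-invariant member of $|-K_{V_u}|$, so this $\mathcal{D}$ is admissible in the sense of~\eqref{equation:Demailly}. It therefore suffices to show that the pair $(V_u,c\,T_{15}^{\prime\prime})$ fails to be log canonical for every $c>\frac{3}{4}$; granted this, the Demailly formula immediately yields $\alpha_G(V_u)\leqslant\frac{3}{4}$, in complete analogy with the deduction of Corollary~\ref{corollary:2-3} from Lemma~\ref{lemma:T-15-prime}.

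To witness this failure, I would take $P$ to be a general point of the $G$-invariant curve~$\mathcal{C}_6$. By Lemma~\ref{lemma:T-15-prime-prime} specialized to $u=\frac{3}{4}$, the germ of $T_{15}^{\prime\prime}$ at $P$ is analytically the product of a smooth one-dimensional factor with the plane curve singularity $y^2-x^4=0$, i.e.\ a tacnode (type~$\mathbf{A}_3$). It is a standard fact that the log canonical threshold of a planar tacnode equals $\frac{3}{4}$; if needed, I would verify this via the usual two-step embedded resolution (blow up the origin to produce a node of the proper transform meeting the exceptional divisor, then blow up that node) together with a direct discrepancy computation, or by invoking the well-known formula $\mathrm{lct}=\frac{1}{2}+\frac{1}{k}$ for the $\mathbf{A}_{k-1}$ singularity $y^2+x^k=0$. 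Since log canonical thresholds are invariant under smooth products, this gives $\mathrm{lct}_P(V_u,T_{15}^{\prime\prime})=\frac{3}{4}$, and hence $(V_u,c\,T_{15}^{\prime\prime})$ fails to be log canonical at $P$ for any $c>\frac{3}{4}$, as required.

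No substantial obstacle is expected in this argument: the heavy lifting---pinning down the precise analytic type of the singularity of $T_{15}^{\prime\prime}$ along $\mathcal{C}_6$---has already been performed in Lemma~\ref{lemma:T-15-prime-prime}. The only remaining ingredient is the elementary and standard fact that a tacnode has log canonical threshold~$\frac{3}{4}$, exactly parallel to the ordinary triple-point computation ($\mathrm{lct}=\frac{2}{3}$) that is implicitly used to derive Corollary~\ref{corollary:2-3} from Lemma~\ref{lemma:T-15-prime}.
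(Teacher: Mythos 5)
Your proposal is correct and is essentially the paper's own (implicit) argument: Corollary~\ref{corollary:3-4} is deduced from Lemma~\ref{lemma:T-15-prime-prime} exactly by observing that $T_{15}^{\prime\prime}$ is a $G$-invariant member of $|-K_{V_u}|$ whose non-isolated tacnodal ($\mathbf{A}_3$) singularity along $\mathcal{C}_6$ has log canonical threshold $\tfrac{1}{2}+\tfrac{1}{4}=\tfrac{3}{4}$, so \eqref{equation:Demailly} gives $\alpha_G(V_u)\leqslant\tfrac{3}{4}$. Your handling of the product structure and the parallel with Corollary~\ref{corollary:2-3} matches the paper's reasoning.
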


\begin{proof}
Suppose that $u=\frac{3}{4}$. Recall that $T_{15}^{\prime\prime}\sim -K_{V_u}$.
Since $T_{15}^{\prime\prime}$
has a tacnodal singularity at a general point of the curve $\mathcal{C}_6$ by Lemma~\ref{lemma:T-15-prime-prime},
the log pair $(V_u,\frac{3}{4}T_{15}^{\prime\prime})$ is not Kawamata log terminal.
Hence~\mbox{$\alpha_G(V_u)\leqslant\frac{3}{4}$}.
\end{proof}

\section{Sarkisov links}
\label{section:link}

Let $\mathcal{C}$ be one of the irreducible $G$-invariant curves $\mathcal{C}_4$ or $\mathcal{C}_6$ in the threefold $V_u$,
let~\mbox{$\sigma\colon\widehat{V}_u\to V_u$} be the blow up of the curve $\mathcal{C}$, and let $E_{\sigma}$ be the exceptional surface of~$\sigma$.
Denote by $\widehat{T}_{i}$,  $\widehat{T}_{15}^{\prime}$, $\widehat{T}_{15}^{\prime\prime}$ the proper transforms on  $\widehat{V}_u$ of the surfaces $T_{i}$, $T_{15}^{\prime}$, $T_{15}^{\prime\prime}$, respectively.

\begin{remark}
\label{remark:G-irreducible-curve}
Suppose that $\mathcal{C}=\mathcal{C}_4$.
Then
$\widehat{T}_{15}^{\prime}\sim\sigma^*(H_{V_u})-m^{\prime}E_{\sigma}$,
where $m^\prime=\mathrm{mult}_{\mathcal{C}}(T_{15}^{\prime})$. By Lemma~\ref{lemma:T-15-prime}, one has
$$
m^\prime=\left\{\aligned
&2\ \ \text{if}\ u\ne 2,\\
&3\ \ \text{if}\ u=2.\\
\endaligned
\right.
$$
Moreover, if $u\ne 2$, then $T_{15}^\prime$ has a non-isolated ordinary double point at a general point of the curve~$\mathcal{C}$.
In this case, one has
$$
\widehat{T}_{15}^{\prime}\big\vert_{E_{\sigma}}=\widehat{\mathcal{C}}+\varkappa\big(\textbf{l}_1+\textbf{l}_2\big),
$$
where $\widehat{\mathcal{C}}$ is a $2$-section of the natural projection  $E_{\sigma}\to\mathcal{C}_4$,
the curves $\textbf{l}_1$ and $\textbf{l}_2$ are the fibers of this projection over two $\mathbb{C}^\ast$-fixed points in $\mathcal{C}_4$, respectively,
and $\varkappa$ is a non-negative integer.
Moreover, it can be seen from \eqref{equation:g-15-prime-C-4} that the curve $\widehat{\mathcal{C}}$ is reducible,
so that it consists of two sections of the projection $E_\sigma\to\mathcal{C}$.
However, the curve $\widehat{\mathcal{C}}$ is $G$-irreducible. This follows from \eqref{equation:involution} and \eqref{equation:g-15-prime-C-4}.
\end{remark}

Let us show that the divisor $-K_{\widehat{V}_u}\sim\sigma^*(H_{V_u})-E_{\sigma}$ is nef.

\begin{lemma}
\label{lemma:blow-up-C4}
Suppose that $\mathcal{C}=\mathcal{C}_4$. Then $\sigma^*(H_{V_u})-E_{\sigma}$ is nef.
\end{lemma}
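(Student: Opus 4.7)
The plan is to verify that $D := \sigma^{*}(H_{V_u}) - E_\sigma \sim -K_{\widehat{V}_u}$ is nef by checking $D \cdot \Gamma \geq 0$ for every irreducible curve $\Gamma \subset \widehat{V}_u$. The organising observation comes from Lemma~\ref{lemma:T-15-prime}: since $T_{15}^{\prime}$ contains $\mathcal{C}_4$ with multiplicity $m^{\prime} \geq 2$, one obtains
\begin{equation*}
\widehat{T}_{15}^{\prime} \sim \sigma^{*}(H_{V_u}) - m^{\prime}E_\sigma \qquad\text{and}\qquad D = \widehat{T}_{15}^{\prime} + (m^{\prime} - 1)E_\sigma.
\end{equation*}
For any irreducible $\Gamma \not\subset \widehat{T}_{15}^{\prime}$, both summands of $D$ meet $\Gamma$ non-negatively and $m^{\prime}-1 \geq 1$, giving $D\cdot\Gamma \geq 0$ immediately.

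It remains to handle irreducible curves $\Gamma \subset \widehat{T}_{15}^{\prime}$. For such a $\Gamma$, my strategy is to exhibit a second divisor $\widehat{H}^{\prime} \in |D|$ not containing $\Gamma$. Using that the rational normal quartic $\mathcal{C}_4$ spans a four-plane $\Pi \subset \mathbb{P}^{13}$, hyperplanes through $\Pi$ cut out on $V_u$ an $8$-dimensional linear subsystem of $|{-K_{V_u}} - \mathcal{C}_4|$; a general member is an anticanonical divisor $H^{\prime}$ containing $\mathcal{C}_4$ with multiplicity one and distinct from $T_{15}^{\prime}$, whose strict transform $\widehat{H}^{\prime}$ lies in $|D|$. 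For $\widehat{H}^{\prime}$ to avoid $\Gamma$ for generic choice, I need $\sigma(\Gamma)$ not to be a base curve of this subsystem, i.e.\ $V_u \cap \Pi$ should have $\mathcal{C}_4$ as its unique one-dimensional component. This follows from Remark~\ref{remark:Y16} (since $V_u$ is cut out by quadrics in $\mathbb{P}^{13}$) combined with the fact that $\mathcal{C}_4$ is the intersection of the six quadrics in $\Pi$ containing it, backed up by Proposition~\ref{proposition:curves}: the $G$-invariant irreducible curves $\mathcal{C}_6$, $\mathcal{C}_{10}$, $\mathcal{C}_{10}^{\prime}$ and any of degree $\geq 12$ have linear span too large to fit in $\Pi$, while $\mathcal{C}_2 \not\subset \Pi$ is verified by direct inspection of the coordinates in~\eqref{equation:zeta}.

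Special attention is needed for curves in the exceptional divisor $E_\sigma$. Fibers $\ell$ of the projection $E_\sigma \to \mathcal{C}_4$ satisfy $D\cdot\ell = 0 - E_\sigma\cdot\ell = 1$. For sections and multi-sections, adjunction together with $-K_{V_u}\cdot\mathcal{C}_4 = 4$ yields $\deg N_{\mathcal{C}_4/V_u} = 2$, so $E_\sigma \cong \mathbb{F}_n$ for some even $n \geq 0$; using Remark~\ref{remark:G-irreducible-curve} to identify the class of $\widehat{T}_{15}^{\prime}\vert_{E_\sigma}$, I would compute $D\vert_{E_\sigma}$ in $\mathrm{Pic}(\mathbb{F}_n)$ and check non-negative intersection with the minimal section.

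The main obstacle is this analysis on $E_\sigma$: determining the Hirzebruch type $\mathbb{F}_n$ precisely from the splitting of $N_{\mathcal{C}_4/V_u}$, and verifying nefness of $D\vert_{E_\sigma}$ against the negative section. The situation is particularly delicate in the degenerate case $u=2$, where $T_{15}^{\prime}$ has a triple rather than an ordinary double point along $\mathcal{C}_4$, so the expression for $\widehat{T}_{15}^{\prime}\vert_{E_\sigma}$ from Remark~\ref{remark:G-irreducible-curve} and the subsequent detection of $n$ have to be adapted.
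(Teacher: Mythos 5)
Your overall strategy---producing members of $|\sigma^*(H_{V_u})-E_\sigma|$ as proper transforms of hyperplane sections of $V_u$ through the linear span $\Pi$ of $\mathcal{C}_4$, and showing that the base locus of this subsystem is too small to obstruct nefness---is the same in spirit as the paper's, but the two steps on which it hinges are not established. First, your justification that $\mathcal{C}_4$ is the only one-dimensional component of $V_u\cap\Pi$ does not go through as stated: that $V_u$ is cut out by quadrics only says that $V_u\cap\Pi$ is cut out by the \emph{restrictions} of those quadrics to $\Pi$, and nothing guarantees that these restrictions span the full $6$-dimensional space of quadrics through the rational normal quartic, so you cannot conclude $V_u\cap\Pi=\mathcal{C}_4$ this way. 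Nor does Proposition~\ref{proposition:curves} close the gap: the curve components of $V_u\cap\Pi$ are only $\mathbb{C}^\ast$-invariant and may be swapped in pairs by $\iota$---the lines $\ell_1,\ell_2$ are exactly such a pair, they lie on most of the anticanonical surfaces through $\mathcal{C}_4$, and they are not covered by a classification of irreducible \emph{$G$-invariant} curves; moreover your claim that the degree-$10$ and degree-$12$ invariant curves have spans too large to fit in $\Pi$ is asserted, not proved. The paper settles this concretely: $\mathcal{C}_4$ lies in $T_{10}\cap T_{11}\cap T_{13}\cap T_{14}\cap T_{15}^\prime\cap T_{16}\cap T_{17}\cap T_{19}\cap T_{20}$, the intersection $T_{11}\cap T_{13}\cap T_{17}\cap T_{19}$ contains only $\mathcal{C}_2$, $\mathcal{C}_4$, $\ell_1$, $\ell_2$ (the two $\pi$-fibres appearing in $T_{13}\cap T_{17}$ are excluded because $N_5$ and $N_{13}$ are smooth at the relevant points of $\Gamma$), and then $\mathcal{C}_2$ is removed by $T_{15}^\prime$ (Remark~\ref{remark:C-2-T-i}) and $\ell_1,\ell_2$ by $T_{10}\cap T_{20}$ (Lemma~\ref{lemma:l-1-l-2-T-i}).

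Second, the part you yourself flag as ``the main obstacle''---curves inside $E_\sigma$---is precisely what cannot be left open: a horizontal curve in $E_\sigma$ lying in the base locus could a priori meet $\sigma^*(H_{V_u})-E_\sigma$ negatively, and your proposed route through the Hirzebruch type of $E_\sigma$ and the class of $\widehat{T}_{15}^\prime\vert_{E_\sigma}$ (delicate at $u=2$, where the multiplicity is $3$) is not carried out. The paper bypasses all of this with one observation: $H_y$ and $H_t$ meet transversally along $\Delta$, hence $T_{13}$ and $T_{17}$ meet transversally at a general point of $\mathcal{C}_4$, so their proper transforms trace curves on $E_\sigma$ with no common horizontal component; consequently the base locus of $|\sigma^*(H_{V_u})-E_\sigma|$ contains at most finitely many fibres of $E_\sigma\to\mathcal{C}_4$, and each fibre has intersection $1$ with $\sigma^*(H_{V_u})-E_\sigma$, which gives nefness uniformly in $u$ (no case distinction at $u=2$ is needed, since $T_{15}^\prime$ plays no role in this step). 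If you rework your argument by replacing the span/quadrics reasoning with the explicit surfaces $T_i$ and by using the $T_{13}$--$T_{17}$ transversality to dispose of $E_\sigma$, the proof closes; as written, it is a plausible plan with the decisive steps missing.
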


\begin{proof}
Recall from \eqref{equation:Delta} that the conic $\Delta$ is the scheme-theoretic intersection of the surfaces $H_y$ and $H_t$.
Moreover, it follows from \eqref{equation:Delta-polynomials} that
$\mathcal{C}_4$ is contained in the intersection
\begin{equation}
\label{equation:C4-T-i}
T_{10}\cap T_{11}\cap T_{13}\cap T_{14}\cap T_{15}^\prime\cap T_{16}\cap T_{17}\cap T_{19}\cap T_{20}.
\end{equation}

Recall also that $T_{13}$ is the proper transform on $V_u$ of the surface $H_y$,
and the surface~$T_{17}$ is the proper transform on $V_u$ of the surface $H_t$.
Thus, using Remark~\ref{remark:C-2-T-i} and Lemma~\ref{lemma:l-1-l-2-T-i}, we see that the intersection $T_{13}\cap T_{17}$
consists of the curve $\mathcal{C}_4$, the conic $\mathcal{C}_2$,
the lines~$\ell_1$ and~$\ell_2$ from Remark~\ref{remark:flop}, and the proper transform on $V_u$ of the fibers
of $\pi$ over the points~\mbox{$(1:0:0:0:0)$} and $(0:0:0:0:1)$.

Recall that $T_{11}$ is the proper transform on $V_u$ of the surface $N_5$,
and the surface $T_{19}$ is the proper transform on $V_u$ of the surface $N_{13}$.
Since $N_{5}$ contains $\Gamma$ and is smooth at the point $(1:0:0:0:0)$,
the surface $\widetilde{N}_5$ does not contain the fiber of $\pi$ over this point.
Similarly,  the surface $\widetilde{N}_{13}$ does not contain the fiber of $\pi$ over the point~\mbox{$(0:0:0:0:1)$}.
Hence, using Remark~\ref{remark:C-2-T-i} again, we see that
the only curves contained in the intersection~\mbox{$T_{11}\cap T_{13}\cap T_{17}\cap T_{19}$}
are the conic $\mathcal{C}_2$, the curve $\mathcal{C}_4$, and the lines $\ell_1$ and $\ell_2$.

By Remark~\ref{remark:C-2-T-i}, the surface $T_{15}^\prime$ does not contain the conic $\mathcal{C}_2$.
Similarly, it follows from  Lemma~\ref{lemma:l-1-l-2-T-i} that the intersection $T_{10}\cap T_{20}$ contains neither $\ell_1$ nor $\ell_2$.
Thus, we see that $\mathcal{C}_4$ is the only curve contained in the intersection \eqref{equation:C4-T-i}.

The base locus of the linear system $|\sigma^*(H_{V_u})-E_{\sigma}|$ does not contain any curves
outside the exceptional surface $E_\sigma$.
Moreover, the surfaces $T_{13}$ and $T_{17}$ intersect transversally at a general point of the curve~$\mathcal{C}_4$,
because the surfaces $H_y$ and $H_t$ intersect transversally at every point of the conic $\Delta$.
Hence, the base locus of the linear system $|\sigma^*(H_{V_u})-E_{\sigma}|$ does not contain curves,
with the only possible exception of finitely many fibers
of the projection~\mbox{$E_\sigma\to\mathcal{C}_4$}.
This implies the required assertion.
\end{proof}

\begin{lemma}
\label{lemma:blow-up-C6}
Suppose that $\mathcal{C}=\mathcal{C}_6$. Then $\sigma^*(H_{V_u})-E_{\sigma}$ is nef.
\end{lemma}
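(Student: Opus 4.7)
The plan is to proceed exactly as in the proof of Lemma~\ref{lemma:blow-up-C4}: exhibit two surfaces in $|H_{V_u}|$ that contain $\mathcal{C}_6$ and meet transversely at a general point of $\mathcal{C}_6$, then use additional surfaces containing $\mathcal{C}_6$ to show that $\mathcal{C}_6$ is the only irreducible curve in their common intersection.

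By \eqref{equation:Upsilon-polynomials} and Lemma~\ref{lemma:T-15-prime-prime}, the curve $\mathcal{C}_6$ lies in $T_{10}\cap T_{12}\cap T_{14}\cap T_{15}^{\prime\prime}\cap T_{16}\cap T_{18}\cap T_{20}$. I will work primarily with $T_{12}$ and $T_{18}$, the proper transforms on $V_u$ of the hyperplane sections $H_x$ and $H_w$ of $Q_u$. By \eqref{equation:Upsilon}, one has $H_x\cap H_w=\Upsilon$ scheme-theoretically, so these two hyperplanes meet transversely along the smooth conic $\Upsilon$ in $Q_u$. Since $\Upsilon\cap\mathcal{S}$ consists of the two points $(0:1:0:0:0)$ and $(0:0:0:1:0)$, and Remark~\ref{remark:flop} gives an isomorphism $Q_u\setminus\mathcal{S}\cong V_u\setminus\mathcal{R}$, I conclude that $T_{12}$ and $T_{18}$ intersect transversely at a general point of $\mathcal{C}_6$.

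Any irreducible component of $T_{12}\cap T_{18}$ different from $\mathcal{C}_6$ must therefore lie in $\mathcal{R}$. By Remark~\ref{remark:C-2-T-i} and Lemma~\ref{lemma:l-1-l-2-T-i}, the curves $\mathcal{C}_2$, $\ell_1$ and $\ell_2$ are such components. To rule out further components, I note that $H_x\cap\Gamma=\{(0:0:0:0:1)\}$ and $H_w\cap\Gamma=\{(1:0:0:0:0)\}$ (each scheme-theoretic intersection having length $6$). Hence $\widetilde{H}_x\vert_{E_{Q_u}}=6\,\mathbf{l}_2$ and $\widetilde{H}_w\vert_{E_{Q_u}}=6\,\mathbf{l}_1$, so $\widetilde{H}_x\cap\widetilde{H}_w$ contains no section of the projection $E_{Q_u}\to\Gamma$ and no $\pi$-exceptional fiber. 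In particular, the second irreducible $G$-invariant curve in $\mathcal{R}$ (which lifts to a section of $E_{Q_u}\to\Gamma$ by Lemma~\ref{lemma:E-Q-u-two-curves}) is not contained in $T_{12}\cap T_{18}$.

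Finally, I use $T_{10}$ and $T_{20}$ to eliminate the remaining three $\mathcal{R}$-components: the conic $\mathcal{C}_2$ is not contained in $T_{10}\cup T_{20}$ by Remark~\ref{remark:C-2-T-i}, and $\ell_1\not\subset T_{10}$, $\ell_2\not\subset T_{20}$ by Lemma~\ref{lemma:l-1-l-2-T-i}. Hence $\mathcal{C}_6$ is the unique irreducible curve contained in $T_{10}\cap T_{12}\cap T_{18}\cap T_{20}$. Combined with the transversality of $T_{12}$ and $T_{18}$ at a general point of $\mathcal{C}_6$ established above, this shows that the base locus of $|\sigma^*(H_{V_u})-E_\sigma|$ contains no curves away from $E_\sigma$, and contains at most finitely many fibers of the projection $E_\sigma\to\mathcal{C}_6$. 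Nefness of $\sigma^*(H_{V_u})-E_\sigma$ follows exactly as at the end of Lemma~\ref{lemma:blow-up-C4}. The main obstacle I anticipate is the rigorous verification that the second $G$-invariant curve in $\mathcal{R}$ truly never appears as a component of $T_{12}\cap T_{18}$ for all $u\in\mathbb{C}\setminus\{0,1\}$; some extra care is required for the special values $u=\frac{2}{3}$ (where this curve has small degree by Remark~\ref{remark:E-Q-u-2-3}) and $u=-2$ (where it enters $T_{10}\cap T_{20}$ by Remark~\ref{remark:u-minus-2}), but my calculation on $E_{Q_u}$ shows it does not meet $\widetilde{H}_x\cap\widetilde{H}_w$ there.
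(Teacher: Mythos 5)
Your proof is correct and follows essentially the same route as the paper's: take $T_{12}$ and $T_{18}$ (the proper transforms of $H_x$ and $H_w$), use $H_x\cap H_w=\Upsilon$ to get transversality along $\mathcal{C}_6$ and to see that $\mathcal{C}_6$ is the only curve in the common intersection of the surfaces from \eqref{equation:C6-T-i} you use, and then conclude nefness exactly as in Lemma~\ref{lemma:blow-up-C4}. The only harmless deviations are that you exclude $\mathcal{C}_2$ via $T_{10}$ and $T_{20}$ instead of $T_{15}^{\prime\prime}$, and you make explicit (via the restrictions of $\widetilde{H}_x$ and $\widetilde{H}_w$ to $E_{Q_u}$) the exclusion of extra components inside $\mathcal{R}$, which the paper handles by simply noting that $H_x$ and $H_w$ miss the relevant points of $\Gamma$.
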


\begin{proof}
Recall from \eqref{equation:Upsilon} that
the conic $\Upsilon$ is the scheme-theoretic intersection of the surfaces $H_x$ and $H_w$.
Moreover, it follows from \eqref{equation:Upsilon-polynomials} that
$\mathcal{C}_6$ is contained in the intersection
\begin{equation}
\label{equation:C6-T-i}
T_{10}\cap T_{12}\cap T_{14}\cap T_{15}^{\prime\prime}\cap T_{16}\cap T_{18}\cap T_{20}.
\end{equation}

Recall also that $T_{12}$ is the proper transform on $V_u$ of the surface $H_x$,
and the surface $T_{18}$ is the proper transform on $V_u$ of the surface $H_w$.
Moreover, the surface $H_x$ does not contain the point $(1:0:0:0:0)$,
and the surface $H_w$ does not contain the point~\mbox{$(0:0:0:0:1)$}.
Thus, using Remark~\ref{remark:C-2-T-i} and  Lemma~\ref{lemma:l-1-l-2-T-i}, we see that the
intersection~\mbox{$T_{12}\cap T_{18}$}
consists of the curve $\mathcal{C}_6$, the conic $\mathcal{C}_2$,
and the lines $\ell_1$ and $\ell_2$ from Remark~\ref{remark:flop}.

By Remark~\ref{remark:C-2-T-i}, the surface $T_{15}^{\prime\prime}$ does not contain the conic $\mathcal{C}_2$.
Similarly, it follows from  Lemma~\ref{lemma:l-1-l-2-T-i} that the intersection $T_{10}\cap T_{20}$ contains neither $\ell_1$ nor $\ell_2$.
Thus, the curve $\mathcal{C}_6$ is the only curve contained in the intersection \eqref{equation:C6-T-i}.

The base locus of the linear system $|\sigma^*(H_{V_u})-E_{\sigma}|$ does not contain any curves
outside the exceptional surface $E_\sigma$.
Moreover, the surfaces $T_{13}$ and $T_{18}$ intersect transversally at a general point of the curve~$\mathcal{C}_6$,
because the surfaces $H_x$ and $H_w$ intersect transversally at every point of the conic $\Upsilon$.
Therefore,  the base locus of the linear system $|\sigma^*(H_{V_u})-E_{\sigma}|$ does not contain curves
with the only possible exception of
finitely many fibers of the projection $E_\sigma\to\mathcal{C}_6$.
This implies the required assertion.
\end{proof}

We see that $-K_{\widehat{V}_u}$ is nef.
Since $E_\sigma^3=-\mathrm{deg}(\mathcal{C})+2$ and \mbox{$\sigma^*(H_{V_u})\cdot E^{2}=-\mathrm{deg}(\mathcal{C})$},
we compute
$$
-K_{\widehat{V}_u}^3=\left\{\aligned
&12 \ \text{if}\ \mathcal{C}=\mathcal{C}_4,\\
&8 \ \text{if}\ \mathcal{C}=\mathcal{C}_6.\\
\endaligned
\right.
$$
Therefore, the divisor $-K_{\widehat{V}_u}$ is also big.
Thus, it follows from Basepoint-free Theorem that
the linear system $|-nK_{\widehat{V}_u}|$ is free from base points for~\mbox{$n\gg 0$}, see  \cite[Theorem~3.3]{KollarMori98}.
This linear system gives a crepant
birational morphism $\eta\colon\widehat{V}_u\to Y$,
so that $Y$ is a Fano threefold with at most canonical singularities such that $-K_{Y}^3=-K_{\widehat{V}_u}^3$.
Observe that according to the classification of smooth Fano threefolds with Picard rank~$2$,
the threefold~$\widehat{V}_u$ is not Fano. In other words, $\eta$ is not an isomorphism, and~$Y$ is indeed
singular.

\begin{lemma}
\label{lemma:small-morphism}
Suppose that $\mathcal{C}=\mathcal{C}_4$. Then $\eta$ is small if and only if $u\ne 2$.
\end{lemma}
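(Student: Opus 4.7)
The plan is to exploit that $\eta$ is the morphism associated to the nef, big $\mathbb{Q}$-divisor $-K_{\widehat{V}_u}$, which is $\eta^*(-K_Y)$ with $-K_Y$ ample. A prime divisor $\widehat{D}\subset\widehat{V}_u$ is contracted by $\eta$ if and only if $(-K_{\widehat{V}_u})|_{\widehat{D}}$ fails to be big, and since $-K_{\widehat{V}_u}$ is nef by Lemma~\ref{lemma:blow-up-C4}, on the surface $\widehat{D}$ this is equivalent to $(-K_{\widehat{V}_u})^2\cdot\widehat{D}=0$. Thus $\eta$ is small precisely when $(-K_{\widehat{V}_u})^2\cdot\widehat{D}>0$ for every prime divisor $\widehat{D}\subset\widehat{V}_u$.

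First I would tabulate the intersection numbers on $\widehat{V}_u$. Since $\mathcal{C}_4$ is a smooth rational curve of $H_{V_u}$-degree $4$, adjunction on $V_u$ forces $\deg N_{\mathcal{C}_4/V_u}=2$, giving
\[
(\sigma^*H_{V_u})^3=22,\quad (\sigma^*H_{V_u})^2\cdot E_\sigma=0,\quad \sigma^*H_{V_u}\cdot E_\sigma^2=-4,\quad E_\sigma^3=-2.
\]
Expanding $-K_{\widehat{V}_u}\sim\sigma^*H_{V_u}-E_\sigma$, these yield the clean formula
\[
(-K_{\widehat{V}_u})^2\cdot\bigl(m\sigma^*H_{V_u}+nE_\sigma\bigr)=6(3m+n).
\]

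Applying this to $\widehat{T}_{15}^\prime\sim\sigma^*H_{V_u}-m^\prime E_\sigma$, where by Remark~\ref{remark:G-irreducible-curve} (built on Lemma~\ref{lemma:T-15-prime}) one has $m^\prime=2$ for $u\neq 2$ and $m^\prime=3$ for $u=2$, gives $(-K_{\widehat{V}_u})^2\cdot\widehat{T}_{15}^\prime=6(3-m^\prime)$, which vanishes precisely when $u=2$. In the case $u=2$, the restriction $(-K_{\widehat{V}_u})|_{\widehat{T}_{15}^\prime}$ is a nef divisor of zero self-intersection on the surface $\widehat{T}_{15}^\prime$, hence not big, so $\eta$ contracts $\widehat{T}_{15}^\prime$ and is not small.

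For the remaining direction, assume $u\neq 2$. I would check $(-K_{\widehat{V}_u})^2\cdot\widehat{D}>0$ for every prime divisor $\widehat{D}$ on $\widehat{V}_u$. For $\widehat{D}=E_\sigma$ the formula gives $6$. Otherwise $\widehat{D}$ is the proper transform of a prime divisor $D\sim dH_{V_u}$ on $V_u$, so $\widehat{D}\sim d\sigma^*H_{V_u}-kE_\sigma$ with $k=\mathrm{mult}_{\mathcal{C}_4}(D)\geq 0$, and the required positivity reads $k<3d$. This is obvious if $\mathcal{C}_4\not\subset D$, and it holds for $D=T_{15}^\prime$ directly because $k=2<3=3d$. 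If $D\neq T_{15}^\prime$ and $\mathcal{C}_4\subset D$, then the $1$-cycle $D\cdot T_{15}^\prime$ has $H_{V_u}$-degree $22d$ and, by the standard lower bound on intersection multiplicities along a smooth curve, contains $\mathcal{C}_4$ with multiplicity at least $k\cdot 2=2k$; since $\deg(\mathcal{C}_4)=4$, this forces $8k\leq 22d$, so $k\leq 11d/4<3d$, as required.

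The main obstacle will be this last step, namely excluding a prime divisor realising the equality $k=3d$. Pairing against $T_{15}^\prime$ settles this uniformly: because $\mathrm{mult}_{\mathcal{C}_4}(T_{15}^\prime)=2$ when $u\neq 2$, the estimate $11/4<3$ leaves a safe margin, and it is precisely the failure of this margin at $u=2$, where $\mathrm{mult}_{\mathcal{C}_4}(T_{15}^\prime)$ jumps to $3$, that produces the divisorial contraction.
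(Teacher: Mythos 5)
Your proposal is correct and follows essentially the same route as the paper: in both arguments the multiplicity of $T_{15}^{\prime}$ along $\mathcal{C}_4$ (equal to $3$ for $u=2$ and $2$ otherwise, via Lemma~\ref{lemma:T-15-prime}) makes $(-K_{\widehat{V}_u})^2\cdot\widehat{T}_{15}^{\prime}$ vanish exactly when $u=2$, and for $u\neq 2$ a putative contracted divisor is excluded by pairing it against $T_{15}^{\prime}$. The only difference is cosmetic: the paper performs this exclusion upstairs, computing $0\leqslant(\sigma^*(H_{V_u})-E_{\sigma})\cdot F\cdot\widehat{T}_{15}^{\prime}=14n-8m$ so that $m\leqslant\frac{7n}{4}<3n$, whereas you bound the multiplicity downstairs on $V_u$ via the degree $22d$ of the cycle $D\cdot T_{15}^{\prime}$ and the product-of-multiplicities estimate, getting $k\leqslant\frac{11d}{4}<3d$; both margins suffice.
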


\begin{proof}
If $u=2$, then $\mathrm{mult}_{\mathcal{C}}(T_{15}^{\prime})=3$ by Lemma~\ref{lemma:T-15-prime}, so that
\begin{multline*}
0\leqslant-K_{\widehat{V}_u}^2\cdot\widehat{T}_{15}^{\prime}=\Big(\sigma^*(H_{V_u})-E_{\sigma}\Big)^2\cdot\Big(\sigma^*(H_{V_u})-3E_{\sigma}\Big)=\\
=22+3\sigma^*(H_{V_u})\cdot E_{\sigma}^2+4\sigma^*(H_{V_u})\cdot E_{\sigma}^2-3E_{\sigma}^3=0,
\end{multline*}
which implies that $\widehat{T}_{15}^{\prime}$ is contracted by $\eta$.

We may assume that $u\ne 2$. Then $\mathrm{mult}_{\mathcal{C}}(T_{15}^{\prime})=2$ by Lemma~\ref{lemma:T-15-prime}.
Let $F$ be an irreducible surface in~$\widehat{V}_u$.
Then $F\sim \sigma^*(nH_{V_u})-mE_{\sigma}$ for some integers $n$ and $m$. We compute
\begin{multline*}
-K_{\widehat{V}_u}^2\cdot F=\Big(\sigma^*(H_{V_u})-E_{\sigma}\Big)^2\cdot\Big(\sigma^*(nH_{V_u})-mE_{\sigma}\Big)=\\
=22n+n\sigma^*(H_{V_u})\cdot E_{\sigma}^2+2m\sigma^*(H_{V_u})\cdot E_{\sigma}^2-mE_{\sigma}^3=18n-6m,
\end{multline*}
so that $F$ is contracted by  $\eta$ if and only if $m=3n$.
In particular, the surface $\widehat{T}_{15}^{\prime}$ is not contracted by  $\eta$.
On the other hand, if $F\ne\widehat{T}_{15}^{\prime}$, then
\begin{multline*}
0\leqslant\Big(\sigma^*(H_{V_u})-E_{\sigma}\Big)\cdot F\cdot\widehat{T}_{15}^{\prime}=\Big(\sigma^*(H_{V_u})-E_{\sigma}\Big)\cdot\Big(\sigma^*(nH_{V_u})-mE_{\sigma}\Big)\cdot\Big(\sigma^*(H_{V_u})-2E_{\sigma}\Big)=\\
=22n+2n\sigma^*(H_{V_u})\cdot E_{\sigma}^2+3m\sigma^*(H_{V_u})\cdot E_{\sigma}^2-2mE_{\sigma}^3=14n-8m,
\end{multline*}
so that $m\ne 3n$, which implies that $F$ is also not contracted by  $\eta$.
\end{proof}

Therefore, if $\mathcal{C}=\mathcal{C}_4$ and $u\ne 2$, then it follows from standard computations as in~\mbox{\cite[\S4.1]{IskovskikhProkhorov}}
or \cite{Takeuchi,ArapCutroneMarshburn,CutroneMarshburn} that there exists a $G$-equivariant commutative diagram
\begin{equation}
\label{equation:Cutrone}
\xymatrix{
&\widehat{V}_u\ar@{-->}[rr]^{\rho}\ar@{->}[ld]_{\sigma}\ar@{->}[rd]_{\eta}&& \widehat{V}_{u^\prime}\ar@{->}[ld]^{\eta^\prime}
\ar@{->}[rd]^{\sigma^\prime}&\\%
V_u&&Y&&V_{u^\prime}}
\end{equation} %
where $\rho$ is the flop in the curves contracted by $\eta$,
and the variety $V_{u^\prime}$ is a smooth Fano threefold of type $V_{22}^\ast$ that corresponds to (some) parameter $u^\prime$,
which is possibly different from $u$.
Here the map $\sigma^\prime$ is a birational morphism  that contracts the proper transform of the surface $\widehat{T}_{15}^{\prime}$
to a unique irreducible $G$-invariant (rational normal) curve $\mathcal{C}_4^\prime$ of degree $4$ in~$V_{u^\prime}$.
The diagram \eqref{equation:Cutrone} is Sarkisov link No.~104 in \cite{CutroneMarshburn}.

\begin{remark}
\label{remark:different-Vu}
It would be interesting to know whether the
threefold $V_{u^\prime}$ in \eqref{equation:Cutrone}
is isomorphic to the threefold $V_{u}$ or not, that is, whether $u=u^\prime$ or not.
\end{remark}

\begin{lemma}
\label{lemma:flopping-curves}
Suppose that $\mathcal{C}=\mathcal{C}_4$ and $u\ne 2$. Then $\eta$ does not contract curves in $E_\sigma$.
\end{lemma}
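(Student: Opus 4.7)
The plan is to argue by contradiction: suppose an irreducible curve $C\subset E_\sigma$ is contracted by $\eta$, so that $-K_{\widehat{V}_u}\cdot C=0$. The strategy is to determine the Hirzebruch type of $E_\sigma\cong\mathbb{F}_n$ precisely enough to show there is no room for such a curve, by combining two independent computations.

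First, I will fix the standard basis $s,f$ of $\mathrm{Pic}(\mathbb{F}_n)$, with $s$ a minimal section (so $s^2=-n$) and $f$ a fiber of $E_\sigma\to\mathcal{C}_4$. A direct calculation using $\sigma^*(H_{V_u})|_{E_\sigma}=4f$ together with the intersection numbers $E_\sigma^3=-2$ and $\sigma^*(H_{V_u})\cdot E_\sigma^2=-4$ gives
\[
E_\sigma|_{E_\sigma}\sim -s+\Big(1-\tfrac{n}{2}\Big)f,\qquad -K_{\widehat{V}_u}\big|_{E_\sigma}\sim s+\Big(3+\tfrac{n}{2}\Big)f,
\]
and in particular $n$ must be even. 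For an irreducible curve $C\sim as+bf$ with $C\ne s$ one has $a,b\geqslant 0$ and $b\geqslant an$, so
\[
-K_{\widehat{V}_u}\cdot C=a\big(3-\tfrac{n}{2}\big)+b
\]
is strictly positive unless $C=s$ and $n=6$. Since $-K_{\widehat{V}_u}$ is nef by Lemma~\ref{lemma:blow-up-C4}, we also get $n\leqslant 6$; thus the argument reduces to ruling out $n\in\{4,6\}$.

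For this I will use the surface $\widehat{T}_{15}^{\prime}$. Because $u\ne 2$, Lemma~\ref{lemma:T-15-prime} gives $\mathrm{mult}_{\mathcal{C}_4}(T_{15}^{\prime})=2$, so $\widehat{T}_{15}^{\prime}\sim\sigma^*(H_{V_u})-2E_\sigma$ and
\[
\widehat{T}_{15}^{\prime}\big|_{E_\sigma}\sim 2s+(n+2)f.
\]
On the other hand, Remark~\ref{remark:G-irreducible-curve} describes this restriction as $\widehat{\mathcal{C}}+\varkappa(\textbf{l}_1+\textbf{l}_2)$ with $\varkappa\geqslant 0$ and $\widehat{\mathcal{C}}$ a union of two distinct sections of $E_\sigma\to\mathcal{C}_4$ swapped by $G$. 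For $n\geqslant 1$ the minimal section $s$ is the unique curve of self-intersection $-n$, hence is $G$-invariant and cannot appear among two $G$-swapped components; each component must therefore have class $\sim s+c_i f$ with $c_i\geqslant n$, forcing $\widehat{\mathcal{C}}\sim 2s+bf$ with $b\geqslant 2n$. Combined with $b+2\varkappa=n+2$ and $\varkappa\geqslant 0$ this yields $n\leqslant 2$, which together with the first step completes the proof. I expect the main obstacle to be this last step: squeezing the inequality $b\geqslant 2n$ out of Remark~\ref{remark:G-irreducible-curve}, a point that crucially rests on both the $G$-transitivity on the two components of $\widehat{\mathcal{C}}$ and the $G$-invariance of the minimal section.
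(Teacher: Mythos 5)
Your proposal is correct and follows essentially the same route as the paper: compute $-E_\sigma\vert_{E_\sigma}$ to identify $E_\sigma\cong\mathbb{F}_n$ with $\kappa=\tfrac{n-2}{2}$, use $\widehat{T}_{15}^{\prime}\vert_{E_\sigma}=\widehat{\mathcal{C}}+\varkappa(\textbf{l}_1+\textbf{l}_2)$ from Remark~\ref{remark:G-irreducible-curve} to force $n\leqslant 2$, and conclude that $-K_{\widehat{V}_u}\vert_{E_\sigma}\sim\textbf{s}+\tfrac{n+6}{2}\textbf{l}$ is positive on every curve in $E_\sigma$. The only (harmless) differences are that you justify more explicitly why the minimal section cannot be a component of $\widehat{\mathcal{C}}$ (the paper compresses this to ``$\widehat{\mathcal{C}}\ne\textbf{s}$'') and you phrase the final step as a case analysis of curve classes plus nefness rather than as ampleness of $-K_{\widehat{V}_u}\vert_{E_\sigma}$.
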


\begin{proof}
The normal bundle of the curve $\mathcal{C}_4$ in $V_u$ is isomorphic to $\mathcal{O}_{\mathbb{P}^1}(p)\oplus\mathcal{O}_{\mathbb{P}^1}(q)$
for some integers $p$ and $q$ such that $p\geqslant q$ and $p+q=2$.
Thus, the exceptional surface~$E_{\sigma}$ is a Hirzebruch surface $\mathbb{F}_n$ for $n=p-q\geqslant 0$.
Denote by $\textbf{s}$ a section of the natural projection $E_{\sigma}\to\mathcal{C}_4$ such that $\textbf{s}^2=-n$,
and denote by $\textbf{l}$ a fiber of this projection. Then~\mbox{$-E_{\sigma}\vert_{E_{\sigma}}\sim\textbf{s}+\kappa\textbf{l}$}
for some integer $\kappa$. One has
$$
-2=E_{\sigma}^3=\Big(\textbf{s}+\kappa\textbf{l}\Big)^2=-n+2\kappa,
$$
so that $\kappa=\frac{n-2}{2}$.
By Remark~\ref{remark:G-irreducible-curve}, one has
$$
\widehat{T}_{15}^{\prime}\big\vert_{E_{\sigma}}= \widehat{\mathcal{C}}+\varkappa\big(\textbf{l}_1+\textbf{l}_2\big),
$$
where $\widehat{\mathcal{C}}$ is a reducible $G$-irreducible $2$-section of the projection  $E_{\sigma}\to\mathcal{C}_4$,
the curves $\textbf{l}_1$ and $\textbf{l}_2$ are the fibers of this projection over two $\mathbb{C}^\ast$-fixed points in $\mathcal{C}_4$, respectively,
and $\varkappa$ is a non-negative integer.
This gives
$$
\widehat{\mathcal{C}}\sim 2\textbf{s}+(n+2-2\varkappa)\textbf{l}.
$$
Since $\widehat{\mathcal{C}}\ne\textbf{s}$, we have
$0\leqslant\widehat{\mathcal{C}}\cdot\textbf{s}=2-n-2\varkappa$,
which gives $n\leqslant 2$. This implies that the divisor
$$
-K_{\widehat{V}_u}\vert_{E_{\sigma}}\sim\textbf{s}+\frac{n+6}{2}\textbf{l}
$$
is ample, and the assertion follows.
\end{proof}

If $\mathcal{C}=\mathcal{C}_6$, then the morphism $\eta$ is never small, since it contracts the surface $\widehat{T}_{15}^{\prime\prime}$.
Indeed, in this case, we have $\widehat{T}_{15}^{\prime\prime}\sim\sigma^*(H_{V_u})-2E_{\sigma}$
by Lemma~\ref{lemma:T-15-prime-prime}, which implies that
$$
K_{\widehat{V}_u}^2\cdot\widehat{T}_{15}^{\prime\prime}=\Big(\sigma^*(H_{V_u})-E_{\sigma}\Big)^2\cdot\Big(\sigma^*(H_{V_u})-2E_{\sigma}\Big)=22+5\sigma^*(H_{V_u})\cdot E_{\sigma}^2-2E_{\sigma}^3=0.
$$
This is a so-called \emph{bad link} (cf. Sarkisov link No.~93 in \cite{ArapCutroneMarshburn}).

\section{The proof}
\label{section:proof}

In this section, we prove Theorem~\ref{theorem:main}. Let
$$
\varepsilon(u)=\left\{\aligned
&\frac{4}{5}\ \ \text{if}\ u\ne\frac{3}{4}\ \text{and}\ u\ne 2,\\
&\frac{3}{4}\ \ \text{if}\ u=\frac{3}{4},\\
&\frac{2}{3}\ \ \text{if}\ u=2.\\
\endaligned
\right.
$$
By Corollaries~\ref{corollary:4-5}, \ref{corollary:2-3} and \ref{corollary:3-4}, we know that $\alpha_G(V_u)\leqslant\varepsilon(u)$.
Thus, by \eqref{equation:Demailly}, to prove Theorem~\ref{theorem:main},
we have to show that the log pair $(V_u, \frac{\varepsilon(u)}{n}\mathcal{D})$ has log canonical singularities
for every  $G$-invariant linear system $\mathcal{D}\subset|-nK_{V_u}|$ and for every positive integer $n$.
For basic properties of singularities of such log pairs, we refer the reader to \cite[Theorem~4.8]{Kollar1997}.

\begin{remark}
\label{remark:KLT-G-invariant-center}
Let $\mathcal{D}$ be a non-empty $G$-invariant linear subsystem
in $|-nK_{V_u}|$ for
some~\mbox{$n\in\mathbb{Z}_{>0}$}.
Fix a positive rational number $\epsilon$.
Suppose that the log  pair $(V_u,\frac{\epsilon}{n}\mathcal{D})$ is strictly log canonical, i.e., log canonical but not Kawamata log terminal.
Let $Z$ be a center of log canonical singularities of the log pair $(V_u,\frac{\epsilon}{n}\mathcal{D})$ (see \cite[Definition~1.3]{Kawamata97}).
Then~$Z$ is $\mathbb{C}^\ast$-invariant. This follows from the existence of an equivariant strong resolution of singularities (see \cite{ReichsteinYoussin2002,Kollar2007}).
\end{remark}

\begin{remark}
\label{remark:mobile-fixed-convexity}
In the assumptions of Remark~\ref{remark:KLT-G-invariant-center}, let $\mathcal{F}$ be the fixed part of the linear system $\mathcal{D}$,
and let $\mathcal{M}$ be its mobile part, so that
$$
\mathcal{D}=\mathcal{F}+\mathcal{M}.
$$
Since $\mathrm{Pic}(V_u)=\mathbb{Z}[-K_{V_u}]$, one has $\mathcal{F}\sim -n_1K_{V_u}$ and
$\mathcal{M}\sim -n_2K_{V_u}$ for some non-negative integers $n_1$ and $n_2$ such that $n_1+n_2=n$.
Then $Z$ is a center of log canonical singularities of either $(V_u,\frac{\epsilon}{n_1}\mathcal{F})$ or $(V_u,\frac{\epsilon}{n_2}\mathcal{M})$,
see \cite[Remark~2.9]{CheltsovShramovV5} and the proof
of~\mbox{\cite[Lemma~2.10]{CheltsovShramovV5}}.
\end{remark}

\begin{remark}
\label{remark:KLT-divisor}
In the assumptions of Remark~\ref{remark:mobile-fixed-convexity},
there is a $\mathbb{C}^\ast$-invariant divisor $D\in\mathcal{D}$.
Then $Z$ is a center of log canonical singularities of the log pair $(V_u,\frac{\epsilon}{2n}(D+\iota(D)))$.
\end{remark}

Hence, to prove Theorem~\ref{theorem:main},
it is enough to show that the log pair $(V_u, \varepsilon(u) D)$ is log canonical
for every $G$-invariant effective $\mathbb{Q}$-divisor $D$ on the threefold $V_u$ such that
$$
D\sim_{\mathbb{Q}}-K_{V_u}.
$$
Moreover, if necessary, we may assume that $D=\frac{1}{n}S$ for some irreducible surface $S$ in the linear system $|-nK_{V_u}|$.
This follows from

\begin{remark}
\label{remark:convexity}
Let $D$ be a $G$-invariant effective $\mathbb{Q}$-divisor $D$ on the threefold $V_u$ such that~\mbox{$D\sim_{\mathbb{Q}}-K_{V_u}$},
and let $Z$ be an irreducible subvariety in $V_u$ such that $Z$ is a center of log canonical singularities of the log pair $(V_u,\epsilon D)$, where $\epsilon$ is a positive rational number.
Suppose that
$$
D=D_1+D_2
$$
for two non-zero effective $G$-invariant $\mathbb{Q}$-divisors
$D_1\sim_{\mathbb{Q}} -\epsilon_1 K_{V_u}$ and $D_2\sim_{\mathbb{Q}} -\epsilon_2K_{V_u}$.
Here $\epsilon_1$ and $\epsilon_2$ are positive rational numbers such that $\epsilon_1+\epsilon_2=1$.
Then either $Z$ is a center of log canonical singularities of the log pair $(V_u,\frac{\epsilon}{\epsilon_1}D_1)$,
or $Z$ is a center of log canonical singularities of the log pair $(V_u,\frac{\epsilon}{\epsilon_2}D_2)$ (or both).
This is well known and easy to prove. See, for instance, \cite[Remark~2.22]{CheltsovShramovUMN} or \cite[Lemma~2.2]{CheltsovPark}.
\end{remark}

The key point in the proof of Theorem~\ref{theorem:main} is the following

\begin{proposition}
\label{proposition:KLT-RNC}
Let $D$ be a $G$-invariant effective $\mathbb{Q}$-divisor on $V_u$ such that $D\sim_{\mathbb{Q}}-K_{V_u}$.
Suppose that $(V_u,\varepsilon(u) D)$ is not log canonical.
Then $(V_u,\varepsilon(u) D)$ is not log canonical at a general point of one of the curves $\mathcal{C}_2$, $\mathcal{C}_4$ or $\mathcal{C}_6$.
\end{proposition}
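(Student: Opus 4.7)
The strategy is to suppose $(V_u,\varepsilon(u)D)$ is not log canonical and to show that the non-log-canonical locus must contain one of the curves $\mathcal{C}_2$, $\mathcal{C}_4$, $\mathcal{C}_6$. By equivariant resolution of singularities (cf.\ Remark~\ref{remark:KLT-G-invariant-center}) the non-lc locus is $G$-invariant; pick an irreducible component $Z$, and after replacing $Z$ by $Z\cup\iota(Z)$ if necessary, assume it is $G$-invariant. First I rule out $\dim Z=2$: since $\mathrm{Pic}(V_u)=\mathbb{Z}[H_{V_u}]$ one has $Z\sim kH_{V_u}$ with $k\geqslant 1$, so $\mathrm{mult}_Z(D)\geqslant\frac{1}{\varepsilon(u)}>1$ would force $D\geqslant\frac{1}{\varepsilon(u)}Z\sim_{\mathbb{Q}}\frac{k}{\varepsilon(u)}H_{V_u}$, contradicting $D\sim_{\mathbb{Q}}H_{V_u}$.

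Next I exclude isolated non-lc points. Since $-(K_{V_u}+\varepsilon(u)D)\sim_{\mathbb{Q}}(1-\varepsilon(u))(-K_{V_u})$ is ample, the Koll\'ar--Shokurov connectedness principle implies the non-klt locus of the pair is connected. An isolated zero-dimensional non-lc center is necessarily $\mathbb{C}^*$-fixed, since otherwise the closure of its $\mathbb{C}^*$-orbit would be a one-dimensional non-lc subset. Its $G$-orbit then consists of two distinct $\mathbb{C}^*$-fixed points swapped by $\iota$ (using Lemma~\ref{lemma:points}), and connectedness forces a one-dimensional non-klt subvariety joining them, reducing the analysis to the curve case.

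The main case is $\dim Z=1$. By Proposition~\ref{proposition:curves}, either $Z\in\{\mathcal{C}_2,\mathcal{C}_4,\mathcal{C}_6,\mathcal{C}_{10},\mathcal{C}_{10}^\prime\}$, or $Z$ has anticanonical degree at least $12$. Curves of sufficiently large degree are ruled out by intersection-theoretic bounds coming from $\mathrm{mult}_Z(D)\geqslant\frac{1}{\varepsilon(u)}$ together with $D\sim_{\mathbb{Q}}-K_{V_u}$ and $-K_{V_u}^3=22$. For the remaining possibilities $Z\in\{\mathcal{C}_{10},\mathcal{C}_{10}^\prime\}$ and the degree-$12$ curves described in Remark~\ref{remark:pencil-base-locus-easy} and Remarks~\ref{remark:E-Q-u-2-3}, \ref{remark:E-Q-u-2}, I would use Corollary~\ref{corollary:pencil-base-locus} to produce, for each $Z\ne\zeta(\Theta_\pm)$, a unique surface $T$ in the pencil $\mathcal{P}_{V_u}$ containing $Z$; restricting $D$ to $T$ (or to one of the distinguished surfaces $T_{15}^\prime$ or $T_{15}^{\prime\prime}$ whose singularities along $\mathcal{C}_4$ and $\mathcal{C}_6$ are computed in Lemmas~\ref{lemma:T-15-prime} and~\ref{lemma:T-15-prime-prime}) and applying inversion of adjunction, I would force the pair $(T,\varepsilon(u)D|_T)$ to fail log canonicity along $Z$, then use the known intersection theory on $T$ to derive a contradiction. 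When the local analysis is inconclusive, the $G$-equivariant Sarkisov link~\eqref{equation:Cutrone} and the bad links of \S\ref{section:link} can be invoked to transfer $Z$ to another threefold of type $V_{22}^\ast$, reducing back to a case already handled.

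The principal obstacle is the exclusion of $\mathcal{C}_{10}$, $\mathcal{C}_{10}^\prime$, and the degree-$12$ curves: crude numerical bounds are insufficient (when $\varepsilon(u)=\frac{4}{5}$ the inequality $\deg Z\leqslant\varepsilon(u)\cdot(-K_{V_u})^3$ only rules out $\deg Z\geqslant 18$), so the argument must combine restrictions to singular pencil members, Shokurov's inversion of adjunction, and the Sarkisov links of \S\ref{section:link}. The special values $u=\frac{2}{3}$ and $u=2$, where additional $G$-invariant curves of degree $10$ and $12$ appear via the exceptional divisor $E_{Q_u}$ as described in Remarks~\ref{remark:E-Q-u-2-3} and~\ref{remark:E-Q-u-2}, will require separate treatment using the corresponding modifications of the Sarkisov link~\eqref{equation:V22}.
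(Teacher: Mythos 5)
There is a genuine gap, and you have in fact flagged it yourself: the whole content of the proposition is the exclusion of the degree-$10$ curves $\mathcal{C}_{10}$, $\mathcal{C}_{10}^\prime$ and of the $G$-invariant curves of degree at least $12$ (in particular $\zeta(\Theta_{\pm})$), and your proposal does not carry this out. The crude bound $\mathrm{mult}_Z(D)>\frac{1}{\varepsilon(u)}$ combined with $-K_{V_u}^3=22$ is, as you note, hopeless, and the fallback strategy you sketch (restrict $D$ to the unique member of $\mathcal{P}_{V_u}$ through $Z$, apply inversion of adjunction, and if that fails invoke the links of \S\ref{section:link}) is not an argument: no restriction or adjunction computation is actually performed, and the strategy breaks down outright for $Z=\zeta(\Theta_{\pm})$, which by Corollary~\ref{corollary:pencil-base-locus} lies in the base locus of $\mathcal{P}_{V_u}$, so there is no distinguished member to restrict to --- yet this degree-$12$ curve must be excluded for the statement to hold. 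Your treatment of the zero-dimensional case is also shaky: connectedness of the non-klt locus of $(V_u,\varepsilon(u)D)$ only produces a non-klt curve joining the two $\iota$-conjugate points, not a non-log-canonical one, so it does not by itself reduce to the curve case.

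The missing idea is the one the paper uses. Choose $\epsilon<\varepsilon(u)$ with $(V_u,\epsilon D)$ strictly log canonical and let $Z$ be a \emph{minimal} center of log canonical singularities; by the perturbation trick (adding a small multiple of a mobile $G$-invariant linear system and shrinking the coefficient of $D$) one arranges a pair of total degree $\theta<\varepsilon(u)$ whose only lc centers are $Z$ and $\iota(Z)$. Ampleness of the complementary divisor then lets Koll\'ar--Shokurov connectedness do two jobs at once: it rules out the point case via Lemma~\ref{lemma:points}, and together with Kawamata's result it forces $Z=\iota(Z)$. The decisive step is Kawamata subadjunction applied to this minimal center: it shows $Z$ is a smooth rational curve with
$$
(\varepsilon(u)-1)\deg(Z)>-2,
$$
hence $\deg(Z)<\frac{2}{1-\varepsilon(u)}\leqslant 10$, i.e. $\deg(Z)\leqslant 9$. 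This bound is exactly sharp enough to eliminate $\mathcal{C}_{10}$, $\mathcal{C}_{10}^\prime$ and everything of higher degree in one stroke, after which Proposition~\ref{proposition:curves} leaves only $\mathcal{C}_2$, $\mathcal{C}_4$, $\mathcal{C}_6$. Without subadjunction (or some substitute of comparable strength), your plan does not close the case that matters.
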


\begin{proof}
Let $\epsilon$ be a positive rational number such that $(V_u,\epsilon D)$ is strictly log canonical.
Then $\epsilon<\varepsilon(u)$.
Let $Z$ be a minimal center of log canonical singularities of the log pair~\mbox{$(V_u,\epsilon D)$}.
Since $\mathrm{Pic}(V_u)$ is generated by $-K_{V_u}$ and $\epsilon<1$, the center $Z$ is either a point or a curve.
Recall from Remark~\ref{remark:KLT-G-invariant-center} that $Z$ is $\mathbb{C}^\ast$-invariant.
Observe that $\iota(Z)$ is also a minimal center of log canonical singularities of the log pair $(V_u, \frac{\epsilon}{n}D)$.

Now we will use the so-called \emph{perturbation trick}.
For details, see~\mbox{\cite[Lemma~2.4.10]{CheltsovShramovBOOK}}, and the~proofs of \cite[Theorem~1.10]{Kawamata97} and \cite[Theorem~1]{Kawamata98}.
Observe that there exists a~mobile $G$-invariant linear system $\mathcal{B}$ on the threefold $V_u$,
and there are rational numbers~\mbox{$1\gg \epsilon_{1}\geqslant 0$} and~\mbox{$1\gg\epsilon_{2}\geqslant 0$} such that
$$
\big(\epsilon-\epsilon_1\big)D+\epsilon_{2}\mathcal{B}\sim_{\mathbb{Q}} -\theta K_{V_u},
$$
for some positive rational number $\theta<\varepsilon(u)$, the log pair
\begin{equation}
\label{equation:new-log-pair}
\Big(V_u,\big(\epsilon-\epsilon_1\big)D+\epsilon_{2}\mathcal{B}\Big)
\end{equation}
has strictly log canonical singularities,
and the only centers of log canonical singularities of the log pair \eqref{equation:new-log-pair} are $Z$ and $\iota(Z)$.

Observe that the divisor $-(K_{V_u}+(\epsilon-\epsilon_1)D+\epsilon_{2}\mathcal{B})$ is ample, since $\theta<\varepsilon(u)<1$.
Thus, the locus of log canonical singularities of the pair \eqref{equation:new-log-pair} is connected
by the Koll\'ar--Shokurov connectedness principle \cite[Corollary~5.49]{KollarMori98}.
Since there are no $G$-fixed points on $V_u$ by Lemma~\ref{lemma:points},
the center $Z$ is not a point, so that $Z$ is a curve.

By \cite[Proposition~1.5]{Kawamata97}, either $Z=\iota(Z)$, or the centers $Z$ and $\iota(Z)$ are disjoint.
Using the Koll\'ar--Shokurov connectedness, we see that $Z=\iota(Z)$, so that $Z$ is $G$-invariant.

Since $(\theta-\varepsilon(u))K_{V_u}$ is an ample $\mathbb{Q}$-divisor,
using Kawamata subadjunction theorem \cite[Theorem~1]{Kawamata98}, we see that $Z$ is smooth and
$$
(1-\varepsilon(u))K_{V_u}\Big\vert_{Z}\sim_{\mathbb{Q}}\Big(K_{V_u}+(\epsilon-\epsilon_1)D+\epsilon_{2}\mathcal{B}+(\theta-\varepsilon(u))K_{V_u}\Big)\Big\vert_{Z}\sim_{\mathbb{Q}} K_Z+D_Z
$$
for some ample divisor $D_Z$ on the curve $Z$. In particular, we see that $Z$ is rational and
$$
(\varepsilon(u)-1)\mathrm{deg}\big(Z\big)>-2,
$$
which implies that $\mathrm{deg}(Z)<\frac{2}{1-\varepsilon(u)}\leqslant 10$, so that $\mathrm{deg}(Z)\leqslant 9$.
Thus, by Proposition~\ref{proposition:curves},
the curve $Z$ is one of the curves $\mathcal{C}_2$, $\mathcal{C}_4$ or $\mathcal{C}_6$,
which is exactly what we need.
\end{proof}

In the remaining part of this section, we will show that $(V_u, \varepsilon(u) D)$
is log canonical at general points of the curves  $\mathcal{C}_2$, $\mathcal{C}_4$ or $\mathcal{C}_6$
for every $G$-invariant effective $\mathbb{Q}$-divisor $D$ on the threefold $V_u$ such that $D\sim_{\mathbb{Q}}-K_{V_u}$.
We start with the conic $\mathcal{C}_2$.

\begin{lemma}
\label{lemma:lct-C2}
Let $D$ be a $G$-invariant effective $\mathbb{Q}$-divisor on $V_u$ such that $D\sim_{\mathbb{Q}}-K_{V_u}$.
Then the log pair $(V_u,\frac{4}{5}D)$ is log canonical at a general point of the curve~$\mathcal{C}_2$.
\end{lemma}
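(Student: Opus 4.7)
The plan is to bound $m := \mathrm{mult}_{\mathcal{C}_2}(D) \leq \tfrac{5}{2}$ using the Mori-dream structure of $\widetilde{V}_u$, and then to conclude log canonicity at a general $P \in \mathcal{C}_2$ by a local simple normal crossings check on the blow-up $\phi \colon \widetilde{V}_u \to V_u$.

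First, from the Sarkisov link \eqref{equation:V22}, the pseudo-effective cone of $\widetilde{V}_u$ is spanned by the two extremal divisors $E_{V_u}$ (contracted by $\phi$) and $\widetilde{\mathcal{R}}$ (corresponding to $E_{Q_u}$ on $\widetilde{Q}_u$ under the flop $\chi^{-1}$, and contracted by $\pi \circ \chi^{-1}$). Since $\widetilde{\mathcal{R}} \sim 2\phi^*H_{V_u} - 5E_{V_u}$ and the strict transform $\widetilde{D} \sim \phi^*H_{V_u} - m E_{V_u}$ is effective, expanding in this basis yields
$$\phi^*H_{V_u} - m E_{V_u} = \tfrac{1}{2}\widetilde{\mathcal{R}} + \bigl(\tfrac{5}{2} - m\bigr) E_{V_u},$$
so $\tfrac{5}{2} - m \geq 0$, i.e.\ $m \leq \tfrac{5}{2}$.

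Next, the pullback identity reads
$$\phi^*\bigl(K_{V_u} + \tfrac{4}{5}D\bigr) = K_{\widetilde{V}_u} + \tfrac{4}{5}\widetilde{D} + \bigl(\tfrac{4m}{5} - 1\bigr) E_{V_u},$$
so the coefficient of $E_{V_u}$ in the pulled-back boundary is at most $1$ (equivalently, the discrepancy of $E_{V_u}$ is $\geq -1$). After the standard reduction via Remarks \ref{remark:mobile-fixed-convexity}, \ref{remark:KLT-divisor}, and \ref{remark:convexity} to an irreducible effective divisor with coefficients at most $1$, the component coefficients of $\tfrac{4}{5}\widetilde{D}$ are bounded by $\tfrac{4}{5} < 1$. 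At a general $P \in \mathcal{C}_2$ the surfaces $\widetilde{D}$ and $E_{V_u}$ are distinct and meet transversally at a generic point of their one-dimensional intersection, so the pair is locally simple normal crossings with boundary coefficients at most $1$, hence log canonical.

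The main obstacle is the boundary case $m = \tfrac{5}{2}$: the effective-cone decomposition then forces $\widetilde{D} = \tfrac{1}{2}\widetilde{\mathcal{R}}$, equivalently $D = \tfrac{1}{2}\mathcal{R}$, and the $E_{V_u}$-coefficient in the boundary equals exactly $1$. The SNC verification demands that $\widetilde{\mathcal{R}}$ and $E_{V_u}$ meet transversally at a general point of their intersection curve, which I would check using the identification of $\widetilde{\mathcal{R}}$ with $E_{Q_u}$ on $\widetilde{Q}_u$ via the flop $\chi^{-1}$ and the smoothness of the ruled surface $E_{Q_u}$; this yields SNC with boundary coefficients $\tfrac{2}{5}$ and $1$, and hence log canonicity in this extremal case as well.
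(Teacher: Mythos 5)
Your opening step coincides with the paper's: the bound $m:=\mathrm{mult}_{\mathcal{C}_2}(D)\leqslant\frac{5}{2}$ obtained from $\widetilde{D}\sim_{\mathbb{Q}}\frac{1}{2}\widetilde{\mathcal{R}}+\big(\frac{5}{2}-m\big)E_{V_u}$, and the log-pullback formula with coefficient $\frac{4m}{5}-1\leqslant 1$ are both correct. The gap is in what you do with them. Since $m$ may lie anywhere in $(\frac{5}{4},\frac{5}{2}]$, the boundary $\frac{4}{5}D$ can have multiplicity up to $2>1$ along $\mathcal{C}_2$, and in that range log canonicity cannot be read off from coefficient bounds: the pair $\big(\widetilde{V}_u,\frac{4}{5}\widetilde{D}+(\frac{4m}{5}-1)E_{V_u}\big)$ could a priori fail to be log canonical along a curve $\widetilde{C}\subset E_{V_u}$ dominating $\mathcal{C}_2$, because $\widetilde{D}$ may be highly singular along such a curve. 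Your assertion that $\widetilde{D}$ and $E_{V_u}$ ``meet transversally at a generic point of their intersection'', so that the pair is simple normal crossings there, is precisely the point at issue and is unjustified: even after the reduction to $D=\frac{1}{n}S$ with $S$ an irreducible surface in $|-nK_{V_u}|$, the strict transform of $S$ can contain a section of $E_{V_u}\to\mathcal{C}_2$ with large multiplicity, or be tangent to $E_{V_u}$ along it, and an irreducible boundary component with coefficient $\frac{4}{5}<1$ but a bad singularity along a curve already produces a non-log-canonical pair. The same unproven transversality is what your treatment of the extremal case $D=\frac{1}{2}\mathcal{R}$ rests on.

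What is actually needed --- and what the paper does --- is to exclude such a curve $\widetilde{C}$. Assuming failure of log canonicity at a general point of $\mathcal{C}_2$, one gets $m>\frac{5}{4}$ and a $G$-irreducible curve $\widetilde{C}\subset E_{V_u}$ with $\phi(\widetilde{C})=\mathcal{C}_2$ along which the pulled-back pair is not log canonical; since $m\leqslant\frac{5}{2}$, $\widetilde{C}$ must be a section of $E_{V_u}\to\mathcal{C}_2$, hence irreducible, and by restriction to $E_{V_u}$ (via \cite[Theorem~5.50]{KollarMori98}) it appears in $\frac{4}{5}\widetilde{D}\vert_{E_{V_u}}$ with coefficient $\theta>1$. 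Intersection theory on $E_{V_u}\cong\mathbb{P}^1\times\mathbb{P}^1$ or $\mathbb{F}_2$ then forces $-K_{\widetilde{V}_u}\cdot\widetilde{C}\leqslant 4$, i.e.\ $\beta(\widetilde{C})$ has degree at most $4$, which contradicts the classification of $G$-invariant curves (Lemmas~\ref{lemma:curves-in-S4} and~\ref{lemma:E-Q-u-curves}). Note that your argument never uses the $G$-invariance of $D$ in any substantive way, whereas this classification of $G$-invariant curves of small degree is the indispensable input; without it the conclusion is not within reach.
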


\begin{proof}
The normal bundle of the conic $\mathcal{C}_2$ in $V_u$
is either isomorphic to $\mathcal{O}_{\mathbb{P}^1}\oplus\mathcal{O}_{\mathbb{P}^1}$,
or isomorphic to $\mathcal{O}_{\mathbb{P}^1}(-1)\oplus\mathcal{O}_{\mathbb{P}^1}(1)$.
Thus, the exceptional surface $E_{V_u}$ is either $\mathbb{P}^1\times\mathbb{P}^1$ or the Hirzebruch surface $\mathbb{F}_2$.

If $E_{V_u}\cong\mathbb{P}^1\times\mathbb{P}^1$,
we denote by $\textbf{s}$ the section of the natural projection $E_{V_u}\to\mathcal{C}_2$ such that $\textbf{s}^2=0$.
Similarly, if $E_{V_u}\cong\mathbb{F}_2$,
we denote by $\textbf{s}$ the section of the projection $E_{V_u}\to\mathcal{C}_2$ such that $\textbf{s}^2=-2$.
If $E_{V_u}\cong\mathbb{P}^1\times\mathbb{P}^1$, then $-E_{V_u}\vert_{E_{V_u}}\sim\textbf{s}$.
Similarly, if $E_{V_u}\cong\mathbb{F}_2$, then
$$
-E_{V_u}\big\vert_{E_{V_u}}\sim\textbf{s}+\textbf{l},
$$
where $\textbf{l}$ is the fiber of the natural projection $E_{V_u}\to\mathcal{C}_2$.

Denote by $\widetilde{D}$ the proper transform of the divisor $D$ on the threefold $\widetilde{V}_u$.
Then
$$
\widetilde{D}\sim_{\mathbb{Q}}\phi^*\big(H_{V_u}\big)-mE_{V_{u}},
$$
where $m=\mathrm{mult}_{\mathcal{C}_2}(D)$. One the other hand, we know that $\mathcal{R}\sim 2\phi^*(H_{V_u})-5E_{V_{u}}$, so that
$$
\widetilde{D}\sim_{\mathbb{Q}}\frac{1}{2}\mathcal{R}+\Big(\frac{5}{2}-m\Big)E_{V_{u}},
$$
which implies that $m\leqslant\frac{5}{2}$, because $E_{Q_u}$ is the proper transform of the surface $\mathcal{R}$ on the threefold $\widetilde{Q}_u$.

Suppose that the log pair $(V_u,\frac{4}{5}D)$ is not log canonical at a general point of the curve~$\mathcal{C}_2$.
Then~\mbox{$m>\frac{5}{4}$}.
Moreover, the surface $E_{V_u}$ contains a $G$-irreducible curve~$\widetilde{C}$ such that $\phi(\widetilde{C})=\mathcal{C}_2$,
and the log pair
\begin{equation}
\label{equation:log-pull-back-2}
\Bigg(\widetilde{V}_u,\frac{4}{5}\widetilde{D}+\Big(\frac{4m}{5}-1\Big)E_{V_u}\Bigg)
\end{equation}
is not log canonical at a general point of the curve $\widetilde{C}$.
Furthermore, since we know that~\mbox{$m\leqslant\frac{5}{2}$}, the curve $\widetilde{C}$ must be a section  of the natural projection $E_{V_u}\to\mathcal{C}_2$.
This fact is well-known. See for instance \cite[Remark~2.5]{CheltsovPark}.
Thus, the curve $\widetilde{C}$ is irreducible.

Applying \cite[Theorem~5.50]{KollarMori98} to \eqref{equation:log-pull-back-2}, we see that the log pair
$(E_{V_u},\frac{4}{5}\widetilde{D}\vert_{E_{V_u}})$
is also not log canonical at a general point of the curve~$\widetilde{C}$.
This simply means that
$$
\frac{4}{5}\widetilde{D}\big\vert_{E_{V_u}}=\theta\widetilde{C}+\Omega
$$
for some rational number $\theta>1$ and some effective $\mathbb{Q}$-divisor $\Omega$ on the surface $E_{V_u}$.

One has $\widetilde{C}\sim \textbf{s}+\kappa\textbf{l}$ for some non-negative integer $\kappa$.
If $E_{V_u}\cong\mathbb{P}^1\times\mathbb{P}^1$, then
$$
\theta\textbf{s}+\theta\kappa\textbf{l}+\Omega\sim_{\mathbb{Q}}\theta\widetilde{C}+\Omega=\frac{4}{5}\widetilde{D}\big\vert_{E_{V_u}}\sim_{\mathbb{Q}}\frac{4m}{5}\textbf{s}+\frac{8}{5}\textbf{l},
$$
so that either $\kappa=0$ or $\kappa=1$.
Thus, in this case we have
$$
-K_{\widetilde{V}_u}\cdot\widetilde{C}=-K_{\widetilde{V}_u}\big\vert_{E_{V_u}}\cdot\widetilde{C}=\big(\textbf{s}+2\textbf{l}\big)\cdot\big(\textbf{s}+\kappa\textbf{l}\big)=2+\kappa\leqslant 3.
$$
Similarly, if $E_{V_u}\cong\mathbb{F}_2$, then
$$
\theta\textbf{s}+\theta\kappa\textbf{l}+\Omega\sim_{\mathbb{Q}}\theta\widetilde{C}+\Omega=\frac{4}{5}\widetilde{D}\big\vert_{E_{V_u}}\sim_{\mathbb{Q}}\frac{4m}{5}\textbf{s}+\frac{8+4m}{5}\textbf{l},
$$
so that $\kappa\leqslant 3$, which gives
$$
-K_{\widetilde{V}_u}\cdot\widetilde{C}=-K_{\widetilde{V}_u}\big\vert_{E_{V_u}}\cdot\widetilde{C}=\big(\textbf{s}+3\textbf{l}\big)\cdot\big(\textbf{s}+\kappa\textbf{l}\big)=1+\kappa\leqslant 4.
$$

We proved that $-K_{\widetilde{V}_u}\cdot\widetilde{C}\leqslant 4$.
Then the degree of the curve $\beta(\widetilde{C})$ is $-K_{\widetilde{V}_u}\cdot\widetilde{C}\leqslant 4$.
This is impossible by Lemmas~\ref{lemma:curves-in-S4} and \ref{lemma:E-Q-u-curves}.
\end{proof}

Now we deal with the curve $\mathcal{C}_{6}$.

\begin{lemma}
\label{lemma:lct-C6}
Let $D$ be an effective $\mathbb{Q}$-divisor on the threefold $V_u$ such that $D\sim_{\mathbb{Q}}-K_{V_u}$.
Suppose that $\mathrm{Supp}(D)$ does not contain $T_{15}^{\prime\prime}$.
Then the log pair $(V_u,D)$ is log canonical at a general point of the curve~$\mathcal{C}_6$.
\end{lemma}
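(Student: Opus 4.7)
The plan is to argue by contradiction: if $(V_u, D)$ fails to be log canonical at a general point of the smooth curve $\mathcal{C}_6$, then since $V_u$ is smooth along $\mathcal{C}_6$ we must have $m := \mathrm{mult}_{\mathcal{C}_6}(D) > 1$, and I aim to derive a numerical contradiction. The key ingredient will be the auxiliary surface $T_{15}^{\prime\prime}$ from Lemma~\ref{lemma:T-15-prime-prime}: it is irreducible, lies in $|-K_{V_u}|$, and, crucially, has multiplicity exactly $2$ along $\mathcal{C}_6$.

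First I would pass to the blow-up $\sigma\colon \widehat{V}_u \to V_u$ of $\mathcal{C}_6$ studied in \S\ref{section:link}. On $\widehat{V}_u$ the proper transforms satisfy
\[
\widehat{D} \sim_{\mathbb{Q}} \sigma^*(H_{V_u}) - m E_\sigma, \qquad \widehat{T}_{15}^{\prime\prime} \sim \sigma^*(H_{V_u}) - 2 E_\sigma.
\]
Since $T_{15}^{\prime\prime}$ is irreducible and, by the hypothesis of the lemma, is not a component of $D$, the divisors $\widehat{D}$ and $\widehat{T}_{15}^{\prime\prime}$ share no irreducible components, so the intersection cycle $\widehat{D} \cdot \widehat{T}_{15}^{\prime\prime}$ is effective on $\widehat{V}_u$.

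Next I would invoke Lemma~\ref{lemma:blow-up-C6}, which asserts that $-K_{\widehat{V}_u} \sim \sigma^*(H_{V_u}) - E_\sigma$ is nef, and pair with it:
\[
0 \;\leq\; (-K_{\widehat{V}_u}) \cdot \widehat{D} \cdot \widehat{T}_{15}^{\prime\prime}.
\]
A routine expansion using the standard intersection numbers $\sigma^*(H_{V_u})^3 = 22$, $\sigma^*(H_{V_u})^2 \cdot E_\sigma = 0$, $\sigma^*(H_{V_u}) \cdot E_\sigma^2 = -\deg(\mathcal{C}_6) = -6$ and $E_\sigma^3 = -\deg(\mathcal{C}_6)+2 = -4$ reduces the right-hand side to $10 - 10m$. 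Hence $m \leq 1$, contradicting $m>1$.

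The main (indeed essentially only) non-routine step is recognizing that $T_{15}^{\prime\prime}$, with its high multiplicity along $\mathcal{C}_6$, is the correct auxiliary surface against which to intersect; this is exactly what makes the numerical slice sharp, and is why Lemma~\ref{lemma:T-15-prime-prime} was set up in advance. Everything else is standard: the implication from $\mathrm{mult}_{\mathcal{C}_6}(D) > 1$ to failure of log canonicity along a smooth curve in a smooth threefold, and the intersection-theoretic computation on the blow-up $\widehat{V}_u$.
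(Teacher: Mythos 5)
Your proposal is correct and follows essentially the same route as the paper: blow up $\mathcal{C}_6$, use $\widehat{T}_{15}^{\prime\prime}\sim\sigma^*(H_{V_u})-2E_\sigma$ from Lemma~\ref{lemma:T-15-prime-prime}, pair the effective cycle $\widehat{D}\cdot\widehat{T}_{15}^{\prime\prime}$ with the nef divisor $\sigma^*(H_{V_u})-E_\sigma$ of Lemma~\ref{lemma:blow-up-C6}, and obtain $10-10m\geqslant 0$, hence $m\leqslant 1$. The only differences are presentational (you phrase it as a contradiction and spell out the effectivity of the intersection cycle and the multiplicity criterion, which the paper leaves implicit).
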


\begin{proof}
Let us use the notation of \S\ref{section:link} with $\mathcal{C}=\mathcal{C}_6$.
Denote by $\widehat{T}_{15}^{\prime\prime}$ the proper transform of the surface $T_{15}^{\prime\prime}$ on the threefold $\widehat{V}_u$.
Then
$$
\widehat{T}_{15}^{\prime\prime}\sim\sigma^*(H_{V_u})-2E_{\sigma}
$$
by Lemma~\ref{lemma:T-15-prime-prime}.

Denote by $\widehat{D}$ the proper transform on $\widehat{V}_u$ of the divisor~$D$. We also let $m=\mathrm{mult}_{\mathcal{C}_6}(D)$.
Using $E_\sigma^3=-4$ and $\sigma^*(H_{V_u})\cdot E^{2}=-6$, we compute
\begin{multline*}
\Big(\sigma^*(H_{V_u})-E_{\sigma}\Big)\cdot\widehat{D}\cdot\widehat{T}_{15}^{\prime\prime}=\Big(\sigma^*(H_{V_u})-E_{\sigma}\Big)\cdot\Big(\sigma^*(H_{V_u})-mE_{\sigma}\Big)\cdot\Big(\sigma^*(H_{V_u})-2E_{\sigma}\Big)=\\
=22+2\sigma^*(H_{V_u})\cdot E_{\sigma}^2+3m\sigma^*(H_{V_u})\cdot E_{\sigma}^2-2mE_{\sigma}^3=10-10m.
\end{multline*}
On the other hand, the divisor $\sigma^*(H_{V_u})-E_{\sigma}$ is nef by Lemma~\ref{lemma:blow-up-C6}.
Thus, we have $m\leqslant 1$, and the assertion follows.
\end{proof}

\begin{corollary}
\label{corollary:lct-C6}
Let $D$ be an effective $\mathbb{Q}$-divisor on $V_u$ such that~\mbox{$D\sim_{\mathbb{Q}}-K_{V_u}$.}
If~$u=\frac{3}{4}$, then the log pair $(V_u,\frac{3}{4}D)$ is log canonical at a general point of the curve~$\mathcal{C}_6$.
If~$u\neq \frac{3}{4}$, then the log pair $(V_u,D)$ is log canonical at a general point of the curve~$\mathcal{C}_6$.
\end{corollary}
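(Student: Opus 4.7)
The plan is to reduce to Lemma~\ref{lemma:lct-C6} by decomposing $D$ along the special surface $T_{15}^{\prime\prime}$ and invoking the convexity of the log canonical condition. Write
\[
D = \alpha T_{15}^{\prime\prime} + D',
\]
where $\alpha \geq 0$ is the coefficient of $T_{15}^{\prime\prime}$ in $D$ and $D'$ is an effective $\mathbb{Q}$-divisor whose support does not contain $T_{15}^{\prime\prime}$. Since $T_{15}^{\prime\prime} \sim -K_{V_u}$ and $D \sim_\mathbb{Q} -K_{V_u}$, one has $D' \sim_\mathbb{Q} (1-\alpha)(-K_{V_u})$, and effectiveness of $D'$ together with $\mathrm{Pic}(V_u) = \mathbb{Z}[-K_{V_u}]$ forces $\alpha \in [0,1]$. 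Let $c = \tfrac{3}{4}$ if $u = \tfrac{3}{4}$ and $c = 1$ otherwise; the goal is to show that $(V_u, c D)$ is log canonical at a general point of $\mathcal{C}_6$.

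If $\alpha = 0$, this is exactly Lemma~\ref{lemma:lct-C6}, together with the trivial fact that scaling the boundary by a factor $\leq 1$ preserves log canonicity. If $\alpha > 0$, the first step is to check that the pair $(V_u, c T_{15}^{\prime\prime})$ is itself log canonical at a general point $P$ of $\mathcal{C}_6$: by Lemma~\ref{lemma:T-15-prime-prime} the surface $T_{15}^{\prime\prime}$ is analytically, near $P$, a product of a disk (parametrizing $\mathcal{C}_6$) with a plane curve singularity of type $A_1$ if $u \neq \tfrac{3}{4}$ and of type $A_3$ if $u = \tfrac{3}{4}$. The log canonical threshold of a node is $1$, and a standard two-blow-up resolution gives $\tfrac{1}{2} + \tfrac{1}{4} = \tfrac{3}{4}$ for an $A_3$ tacnode, so in both cases $(V_u, c T_{15}^{\prime\prime})$ is log canonical at $P$. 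This already settles the case $\alpha = 1$, where $D' = 0$ and $D = T_{15}^{\prime\prime}$.

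Finally, for $0 < \alpha < 1$, set $D'' = \tfrac{1}{1-\alpha} D'$. Then $D''$ is effective with $D'' \sim_\mathbb{Q} -K_{V_u}$ and $T_{15}^{\prime\prime} \not\subset \mathrm{Supp}(D'')$, so Lemma~\ref{lemma:lct-C6} gives that $(V_u, D'')$, and hence $(V_u, c D'')$, is log canonical at a general point of $\mathcal{C}_6$. The identity
\[
cD = \alpha\,(c T_{15}^{\prime\prime}) + (1-\alpha)(c D'')
\]
exhibits $cD$ as a convex combination of two boundaries that are log canonical at a general point of $\mathcal{C}_6$, so $(V_u, cD)$ is log canonical there by convexity of the log canonical condition (discrepancies and boundary coefficients depend linearly on the boundary, so the bounds required for log canonicity are preserved under convex combinations). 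The only non-routine ingredient is the local lct calculation at the $A_3$ tacnode, and it is precisely this calculation that forces the drop from $c = 1$ to $c = \tfrac{3}{4}$ when $u = \tfrac{3}{4}$.
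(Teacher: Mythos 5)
Your proposal is correct and takes essentially the same route as the paper: you split off the $T_{15}^{\prime\prime}$-component of $D$, verify that $(V_u, c\,T_{15}^{\prime\prime})$ is log canonical at a general point of $\mathcal{C}_6$ using the singularity description of Lemma~\ref{lemma:T-15-prime-prime} (ordinary double point versus tacnode, with log canonical thresholds $1$ and $\frac{3}{4}$), treat the remaining part by Lemma~\ref{lemma:lct-C6}, and conclude by convexity of log canonicity, which is exactly the content of Remark~\ref{remark:convexity} used in the paper. The only difference is that you make explicit the local threshold computations that the paper leaves implicit in its citation of Lemma~\ref{lemma:T-15-prime-prime}.
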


\begin{proof}
If $u=\frac{3}{4}$, then $(V_u,\frac{3}{4}T_{15}^{\prime\prime})$ is log canonical at a general point of~$\mathcal{C}_6$ by Lemma~\ref{lemma:T-15-prime-prime}.
Likewise, if $u\ne\frac{3}{4}$, then the pair $(V_u,T_{15}^{\prime\prime})$ is log canonical at a general point of the curve~$\mathcal{C}_6$.
Thus, by Remark~\ref{remark:convexity}, we may assume that $\mathrm{Supp}(D)$ does not contain the surface $T_{15}^{\prime\prime}$.
Now the assertion follows from Lemma~\ref{lemma:lct-C6}.
\end{proof}

Combining Proposition~\ref{proposition:KLT-RNC}, Lemma~\ref{lemma:lct-C2} and Corollary~\ref{corollary:lct-C6}, we obtain

\begin{corollary}
\label{corollary:final}
Let $D$ be a $G$-invariant effective $\mathbb{Q}$-divisor on $V_u$ such that $D\sim_{\mathbb{Q}}-K_{V_u}$.
Suppose that the log pair $(V_u,\varepsilon(u) D)$ is log canonical at a general point of the curve~$\mathcal{C}_4$.
Then the log pair $(V_u,\varepsilon(u) D)$ is log canonical.
\end{corollary}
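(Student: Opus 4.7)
The proof will be a short combinatorial assembly of the three preceding results, so the strategy is to argue by contradiction and eliminate each of the three possible ``bad'' loci provided by Proposition~\ref{proposition:KLT-RNC}.

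The plan is to suppose that $(V_u,\varepsilon(u)D)$ fails to be log canonical somewhere. Since $D$ is $G$-invariant and $D\sim_{\mathbb{Q}}-K_{V_u}$, I may apply Proposition~\ref{proposition:KLT-RNC}, which forces the non-log-canonical locus to contain a general point of at least one of the three curves $\mathcal{C}_2$, $\mathcal{C}_4$, or $\mathcal{C}_6$. The hypothesis of the corollary immediately rules out $\mathcal{C}_4$, so I only need to exclude $\mathcal{C}_2$ and $\mathcal{C}_6$.

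For $\mathcal{C}_2$ I would observe that $\varepsilon(u)\leqslant\tfrac{4}{5}$ in all three cases of the definition of $\varepsilon(u)$, so log canonicity of $(V_u,\tfrac{4}{5}D)$ at a general point of $\mathcal{C}_2$, established in Lemma~\ref{lemma:lct-C2}, implies log canonicity of $(V_u,\varepsilon(u)D)$ at a general point of $\mathcal{C}_2$ (log canonicity is preserved under shrinking the boundary coefficient). For $\mathcal{C}_6$ I would split into two sub-cases exactly as in Corollary~\ref{corollary:lct-C6}: if $u=\tfrac{3}{4}$, then $\varepsilon(u)=\tfrac{3}{4}$ and Corollary~\ref{corollary:lct-C6} gives log canonicity of $(V_u,\tfrac{3}{4}D)=(V_u,\varepsilon(u)D)$ at a general point of $\mathcal{C}_6$; if $u\ne\tfrac{3}{4}$, then Corollary~\ref{corollary:lct-C6} yields log canonicity of $(V_u,D)$ at a general point of $\mathcal{C}_6$, and since $\varepsilon(u)\leqslant\tfrac{4}{5}<1$, the pair $(V_u,\varepsilon(u)D)$ is still log canonical there. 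In either sub-case the third possible bad locus is excluded, and the three exclusions together contradict Proposition~\ref{proposition:KLT-RNC}, completing the proof.

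There is essentially no serious obstacle: the corollary is purely a book-keeping step that assembles Proposition~\ref{proposition:KLT-RNC}, Lemma~\ref{lemma:lct-C2}, and Corollary~\ref{corollary:lct-C6}. The only thing one needs to watch is to check that the numerical inequality $\varepsilon(u)\leqslant\tfrac{4}{5}$ (respectively, the appropriate bound for $\mathcal{C}_6$) holds in each branch of the definition of $\varepsilon(u)$, which is immediate from the case analysis $\varepsilon(u)\in\{\tfrac{4}{5},\tfrac{3}{4},\tfrac{2}{3}\}$.
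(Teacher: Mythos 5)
Your proof is correct and is essentially the paper's own argument: the paper obtains this corollary simply by ``combining Proposition~\ref{proposition:KLT-RNC}, Lemma~\ref{lemma:lct-C2} and Corollary~\ref{corollary:lct-C6}'', which is exactly the assembly (with the bookkeeping $\varepsilon(u)\leqslant\frac{4}{5}$ and the case split at $u=\frac{3}{4}$) that you carry out.
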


Finally, we deal with the curve $\mathcal{C}_{4}$ using Corollary~\ref{corollary:final}.

\begin{lemma}
\label{lemma:lct-C4}
Let $D$ be a $G$-invariant effective $\mathbb{Q}$-divisor on $V_u$ such that $D\sim_{\mathbb{Q}}-K_{V_u}$.
Suppose that $\mathrm{Supp}(D)$ does not contain $T_{15}^{\prime}$.
Then the log pair $(V_u,\frac{5}{6}D)$ is log canonical at a general point of the curve~$\mathcal{C}_4$.
\end{lemma}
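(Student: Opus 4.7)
The plan is to adapt the argument of Lemma~\ref{lemma:lct-C6}, combining the blow-up $\sigma\colon\widehat{V}_u\to V_u$ of $\mathcal{C}_4$ from \S\ref{section:link} with the Sarkisov link~\eqref{equation:Cutrone}. Write $\widehat{D}\sim\sigma^*(H_{V_u})-m E_\sigma$ with $m=\mathrm{mult}_{\mathcal{C}_4}(D)$. Using $E_\sigma^3=-2$ and $\sigma^*(H_{V_u})\cdot E_\sigma^2=-4$, the nefness of $\sigma^*(H_{V_u})-E_\sigma$ from Lemma~\ref{lemma:blow-up-C4} together with the hypothesis that $D$ and $T_{15}^{\prime}$ share no components give
$$
0 \leq \bigl(\sigma^*(H_{V_u})-E_\sigma\bigr)\cdot\widehat{D}\cdot\widehat{T}_{15}^{\prime}.
$$
For $u=2$, Lemma~\ref{lemma:T-15-prime} gives $\widehat{T}_{15}^{\prime}\sim\sigma^*(H_{V_u})-3 E_\sigma$, the right-hand side evaluates to $10-10m$, so $m\leq 1$ and the assertion follows.

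For $u\neq 2$ one has $\widehat{T}_{15}^{\prime}\sim\sigma^*(H_{V_u})-2 E_\sigma$ and the same computation only gives $m\leq 7/4$, which is insufficient. I argue by contradiction: assume $m>6/5$. Then the log pull-back $\bigl(\widehat{V}_u,\tfrac{5}{6}\widehat{D}+(\tfrac{5m}{6}-1)E_\sigma\bigr)$ fails to be log canonical along a general fiber of $E_\sigma\to\mathcal{C}_4$. Since $V_u$ has no $G$-fixed points by Lemma~\ref{lemma:points}, the Koll\'ar--Shokurov connectedness principle forces a minimal log canonical center to be a horizontal $G$-irreducible curve $\widehat{C}\subset E_\sigma$ with $\sigma(\widehat{C})=\mathcal{C}_4$. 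Adjunction on the surface $E_\sigma$ then yields $\mathrm{mult}_{\widehat{C}}(\widehat{D}|_{E_\sigma})>6/5$.

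I next use the Sarkisov link~\eqref{equation:Cutrone}. By Lemma~\ref{lemma:flopping-curves} no curve in $E_\sigma$ is contracted by $\eta$, so the flop $\rho$ is an isomorphism at a general point of $\widehat{C}$. The Picard group transformation induced by $\rho_*$ is determined by $-K_{\widehat{V}_u}\mapsto -K_{\widehat{V}_{u'}}$ and $\widehat{T}_{15}^{\prime}\mapsto E_{\sigma'}$, which together give $\sigma^*(H_{V_u})\mapsto 2\sigma'^*(H_{V_{u'}})-3 E_{\sigma'}$. Assuming $\widehat{C}\not\subset\widehat{T}_{15}^{\prime}$, the image $C^\prime=\sigma'(\rho(\widehat{C}))$ is a $G$-invariant irreducible curve on $V_{u'}$ of degree $\bigl(2\sigma^*(H_{V_u})-3 E_\sigma\bigr)\cdot\widehat{C}$. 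Using $E_\sigma\cong\mathbb{F}_n$ with $n\in\{0,2\}$ (Lemma~\ref{lemma:flopping-curves}) and controlling the class of $\widehat{C}$ via the multiplicity inequality and $m\leq 7/4$, the value of $\deg C^\prime$ falls strictly below $12$ but does not match any degree in the list $\{2,4,6,10\}$, contradicting Proposition~\ref{proposition:curves} applied to $V_{u'}$.

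The remaining case $\widehat{C}\subset\widehat{T}_{15}^{\prime}$ means that $\widehat{C}$ is one of the two components of the $G$-irreducible $2$-section $\widehat{\mathcal{C}}=\widehat{T}_{15}^{\prime}|_{E_\sigma}$ from Remark~\ref{remark:G-irreducible-curve}. By $\iota$-equivariance of the set of log canonical centers, both components are then log canonical centers with coefficient exceeding $6/5$ in $\widehat{D}|_{E_\sigma}$, so that $\widehat{D}|_{E_\sigma}-\tfrac{6}{5}\widehat{\mathcal{C}}$ must be effective on $E_\sigma$. Since $\widehat{D}|_{E_\sigma}\sim m\textbf{s}+(4+m\kappa)\textbf{l}$ and $\widehat{\mathcal{C}}\sim 2\textbf{s}+(n+2-2\varkappa)\textbf{l}$ with $\kappa=(n-2)/2$, effectivity forces $m\geq 12/5$, contradicting $m\leq 7/4$. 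The hard part will be the last case analysis, which has to be performed uniformly in $n\in\{0,2\}$ and in the position of $\widehat{C}$ relative to $\widehat{\mathcal{C}}$.
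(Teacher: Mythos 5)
Your opening matches the paper: the nefness of $\sigma^*(H_{V_u})-E_{\sigma}$ from Lemma~\ref{lemma:blow-up-C4}, the computation $10-10m\geqslant 0$ for $u=2$, the bound $m\leqslant\frac{7}{4}$ for $u\ne 2$, and the reduction of non-log-canonicity to a $G$-irreducible horizontal curve $\widehat{C}\subset E_\sigma$ are all exactly the paper's steps. Your final sub-case $\widehat{C}\subset\widehat{T}_{15}^{\prime}$ is also fine, though unnecessary in this form: since $\frac{5m}{6}<2$, the center is a \emph{section} of $E_\sigma\to\mathcal{C}_4$, hence irreducible and therefore $G$-invariant, while the two components of the $2$-section $\widehat{\mathcal{C}}$ of Remark~\ref{remark:G-irreducible-curve} are swapped by $\iota$; so $\widehat{C}\not\subset\widehat{T}_{15}^{\prime}$ automatically, which is how the paper disposes of this case in one line.

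The genuine gap is in your main case, where you replace the paper's mechanism by a degree count for $C^\prime=\sigma^\prime(\rho(\widehat{C}))$. First, the formula $\deg C^\prime=\bigl(2\sigma^*(H_{V_u})-3E_\sigma\bigr)\cdot\widehat{C}$ tacitly assumes that intersection numbers against $\widehat{C}$ are preserved by the flop $\rho$; they are not. Every $\eta$-exceptional curve $F$ satisfies $\bigl(\sigma^*(H_{V_u})-E_\sigma\bigr)\cdot F=0$ with $\sigma^*(H_{V_u})\cdot F>0$, so the flopping curves meet $E_\sigma$ and may meet $\widehat{C}$; only $(-K)$-degrees pass through the flop, giving $\deg C^\prime=-K_{\widehat{V}_u}\cdot\widehat{C}+E_{\sigma^\prime}\cdot\rho(\widehat{C})$ with an uncontrolled non-negative second term. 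Second, and more importantly, even if one could compute $\deg C^\prime$ exactly, Proposition~\ref{proposition:curves} alone cannot produce a contradiction: for instance when $E_\sigma\cong\mathbb{F}_2$ and $\widehat{C}$ is the negative section one has $-K_{\widehat{V}_u}\cdot\widehat{C}=2$, and $C^\prime$ can perfectly well be the conic $\mathcal{C}_2$ on $V_{u^\prime}$ (in other configurations it can be $\mathcal{C}_4^\prime$ itself), which is an allowed low-degree curve. The contradiction in the paper does not come from the curve but from transporting the \emph{pair}: with $\epsilon=\frac{5}{6}$ one shows $D^\prime\sim_{\mathbb{Q}}-(2-m)K_{V_{u^\prime}}$, $\mathrm{mult}_{\mathcal{C}_4^\prime}(D^\prime)=3-2m$ and $\mathrm{mult}_{\mathcal{C}_4^\prime}(T^\prime)=2$, so that $\epsilon D^\prime+(\epsilon m-1)T^\prime\sim_{\mathbb{Q}}-(2\epsilon-1)K_{V_{u^\prime}}$ with $2\epsilon-1=\frac{2}{3}\leqslant\varepsilon(u^\prime)$ and multiplicity $3\epsilon-2<1$ along $\mathcal{C}_4^\prime$; then Corollary~\ref{corollary:final} applied to $V_{u^\prime}$ (which rests only on Proposition~\ref{proposition:KLT-RNC}, Lemma~\ref{lemma:lct-C2} and Corollary~\ref{corollary:lct-C6}, so there is no circularity) forces this pair to be log canonical, contradicting its non-log-canonicity along $\rho(\widehat{C})$. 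Your proposal never transports the pair, so it cannot rule out the cases where $C^\prime$ is one of the permitted curves, and you yourself flag the remaining case analysis as unfinished; as it stands the argument does not close for $u\ne 2$.
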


\begin{proof}
Let us use the notation of \S\ref{section:link} with $\mathcal{C}=\mathcal{C}_4$.
Then $\sigma^*(H_{V_u})-E_{\sigma}$ is nef by Lemma~\ref{lemma:blow-up-C4}.
Denote by $\widehat{D}$ the proper transform on $\widehat{V}_u$ of the divisor $D$.
We also let $m=\mathrm{mult}_{\mathcal{C}_4}(D)$.
If $u=2$, then $\mathrm{mult}_{\mathcal{C}_4}(T_{15}^\prime)=3$ by Remark~\ref{remark:G-irreducible-curve},
so that
\begin{multline*}
0\leqslant\Big(\sigma^*(H_{V_u})-E_{\sigma}\Big)\cdot\widehat{D}\cdot\widehat{T}_{15}^{\prime}=\Big(\sigma^*(H_{V_u})-E_{\sigma}\Big)\cdot\Big(\sigma^*(H_{V_u})-mE_{\sigma}\Big)\cdot\Big(\sigma^*(H_{V_u})-3E_{\sigma}\Big)=\\
=22+3\sigma^*(H_{V_u})\cdot E_{\sigma}^2+4m\sigma^*(H_{V_u})\cdot E_{\sigma}^2-3mE_{\sigma}^3=10-10m,
\end{multline*}
so that $m\leqslant 1$, which implies that the log pair $(V_u,D)$ is log canonical at a general point of the curve~$\mathcal{C}_4$.

Hence, we may assume that $u\ne 2$, so that $\mathrm{mult}_{\mathcal{C}_4}(T_{15}^\prime)=2$ by Remark~\ref{remark:G-irreducible-curve}.
Then
\begin{multline*}
0\leqslant\Big(\sigma^*(H_{V_u})-E_{\sigma}\Big)\cdot\widehat{D}\cdot\widehat{T}_{15}^{\prime}=\Big(\sigma^*(H_{V_u})-E_{\sigma}\Big)\cdot\Big(\sigma^*(H_{V_u})-mE_{\sigma}\Big)\cdot\Big(\sigma^*(H_{V_u})-2E_{\sigma}\Big)=\\
=22+2\sigma^*(H_{V_u})\cdot E_{\sigma}^2+3m\sigma^*(H_{V_u})\cdot E_{\sigma}^2-2mE_{\sigma}^3=14-8m,
\end{multline*}
which gives $m\leqslant\frac{7}{4}$. Let us show that this implies that $(V_u,\frac{5}{6}D)$ is log canonical at a general point of the curve~$\mathcal{C}_4$.

Let $\epsilon=\frac{5}{6}$. Suppose that $(V_u,\epsilon D)$ is not log canonical at a general point of the curve~$\mathcal{C}_4$.
Then the surface $E_\sigma$ contains a $G$-irreducible curve $\widehat{Z}$ such that $\sigma(\widehat{Z})=\mathcal{C}_4$,
and the log pair
\begin{equation}
\label{equation:log-pull-back}
\Bigg(\widehat{V}_u,\epsilon\widehat{D}+\Big(\epsilon m-1\Big)E_\sigma\Big)
\end{equation}
is not log canonical at a general point of the curve~$\widehat{Z}$.
Moreover, since $\epsilon m=\frac{5m}{6}\leqslant\frac{35}{24}<2$,
the curve $\widehat{Z}$ must be a section  of the natural projection $E_\sigma\to\mathcal{C}_4$.
This is well-known. See for instance \cite[Remark~2.5]{CheltsovPark}.

We see that $\widehat{Z}$ is irreducible.
Thus, the curve $\widehat{Z}$ is not contained in $\widehat{T}_{15}^{\prime}$ by Remark~\ref{remark:G-irreducible-curve}.
Moreover, it follows from Lemma~\ref{lemma:flopping-curves} that the curve $\widehat{Z}$ is not contracted by $\eta$,
so that $\widehat{Z}$ is not flopped by~$\rho$.

Denote by $D^\prime$ the proper transform of the divisor $D$ on the threefold $V_{u^\prime}$,
and denote by $T^\prime$ the proper transform of the exceptional surface $E_{\sigma}$ on the threefold $V_{u^\prime}$.
Then the log pair
\begin{equation}
\label{equation:log-pair-V-u-prime}
\Big(V_{u^\prime}, \epsilon D^\prime+\big(\epsilon m-1\big)T^\prime\Big)
\end{equation}
is not log canonical,
because the log pair \eqref{equation:log-pull-back} is is not log canonical at a general point of the curve~$\widehat{Z}$.

Let us compute the class of the divisor $D^\prime$ in the group $\mathrm{Pic}(V_{u^\prime})$,
and the multiplicity of the divisor $D^\prime$ at a general point of the curve~$\mathcal{C}_{4}^\prime$.
Recall from \eqref{equation:Cutrone} that $\mathcal{C}_4^\prime$ is the unique irreducible $G$-invariant curve of degree $4$ in the threefold $V_{u^\prime}$.
We have
$$
\widehat{D}+\big(m-1\big)E_\sigma\sim_{\mathbb{Q}}-K_{\widehat{V}_{u}}.
$$
This implies that $D^\prime+\big(m-1\big)T^\prime\sim_{\mathbb{Q}}-K_{V_{u^\prime}}$,
where $T^\prime$ is the unique surface in the linear system $|-K_{V_{u^\prime}}|$ that is singular along the curve $\mathcal{C}_4^\prime$.
Thus, we have
$$
D^\prime\sim_{\mathbb{Q}} -\big(2-m\big)K_{V_{u^\prime}}.
$$
Similar arguments applied to the divisor $\frac{1}{2-m}D^\prime$ give
$$
-\frac{1}{2-m}K_{V}\sim_{\mathbb{Q}}\frac{1}{2-m}D\sim_{\mathbb{Q}} -\Bigg(2-\frac{\mathrm{mult}_{\mathcal{C}_{4}^\prime}\big(D^\prime\big)}{2-m}\Bigg)K_{V},
$$
so that $\mathrm{mult}_{\mathcal{C}_{4}^\prime}(D^\prime)=3-2m$.

Observe that $\mathrm{mult}_{\mathcal{C}_{4}^\prime}(T^\prime)=2$. Thus, we have
$$
\mathrm{mult}_{\mathcal{C}_{4}^\prime}\Big(\epsilon D^\prime+\big(\epsilon m-1\big)T^\prime\Big)=3\epsilon-2<1,
$$
so that \eqref{equation:log-pair-V-u-prime} is log canonical at a general point of the curve $\mathcal{C}_{4}^\prime$.
On the other hand, we have
$$
\epsilon D^\prime+\big(\epsilon m-1\big)T^\prime\sim_{\mathbb{Q}}-\big(2\epsilon-1\big)K_{V_{u^\prime}}
$$
and $2\epsilon-1=\frac{2}{3}\leqslant\varepsilon(u)$.
Thus, the log pair \eqref{equation:log-pair-V-u-prime} must be log canonical by Corollary~\ref{corollary:final} applied to $V_{u^\prime}$.
The obtained contradiction completes the proof of the lemma.
\end{proof}

\begin{corollary}
\label{corollary:lct-C4}
Let $D$ be an effective $\mathbb{Q}$-divisor on $V_u$ such that $D\sim_{\mathbb{Q}}-K_{V_u}$.
If~$u=2$, then the log pair $(V_u,\frac{2}{3}D)$ is log canonical at a general point of the curve~$\mathcal{C}_4$.
If~$u\neq 2$, then the log pair $(V_u,\frac{5}{6}D)$ is log canonical at a general point of the curve~$\mathcal{C}_4$.
\end{corollary}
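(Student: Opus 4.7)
The proof will follow the template of Corollary~\ref{corollary:lct-C6}: combine the singularity information for $T_{15}^\prime$ furnished by Lemma~\ref{lemma:T-15-prime} with Lemma~\ref{lemma:lct-C4}, the bridge being a convexity reduction to a divisor whose support avoids $T_{15}^\prime$.

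First I would extract from Lemma~\ref{lemma:T-15-prime} the log canonical threshold of $T_{15}^\prime$ at a general point $P\in\mathcal{C}_4$. Since $T_{15}^\prime$ is singular along the whole curve $\mathcal{C}_4$ and a transverse slice at $P$ is a plane curve with the singularity described in Lemma~\ref{lemma:T-15-prime}, this lct coincides with the lct of that plane curve. For $u\ne 2$ the slice is a node, with lct equal to $1$; hence $(V_u,\frac{5}{6}T_{15}^\prime)$ is log canonical at $P$. For $u=2$ the slice is an ordinary triple point of type $\mathbf{D}_4$, with lct equal to $\frac{2}{3}$; hence $(V_u,\frac{2}{3}T_{15}^\prime)$ is log canonical at $P$.

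Next I would decompose $D=\alpha T_{15}^\prime+\Omega$, with $\alpha\geqslant 0$ and the support of $\Omega$ not containing $T_{15}^\prime$. Since $T_{15}^\prime\sim -K_{V_u}$, effectivity of $\Omega\sim_{\mathbb{Q}}-(1-\alpha)K_{V_u}$ forces $\alpha\in[0,1]$. If $\alpha=1$ then $D=T_{15}^\prime$ and the conclusion reduces to the previous step. If $\alpha<1$, put $\Omega^\prime=\frac{1}{1-\alpha}\Omega$; this is a $G$-invariant effective $\mathbb{Q}$-divisor with $\Omega^\prime\sim_{\mathbb{Q}}-K_{V_u}$ whose support does not contain $T_{15}^\prime$, so Lemma~\ref{lemma:lct-C4} gives that $(V_u,\frac{5}{6}\Omega^\prime)$ is log canonical at $P$.

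Finally I would combine the two pieces using the convex linearity of the log canonical condition (cf.\ Remark~\ref{remark:convexity}). Writing
$$
\varepsilon(u)\,D=\alpha\cdot\varepsilon(u)\,T_{15}^\prime+(1-\alpha)\cdot\varepsilon(u)\,\Omega^\prime,
$$
where $\varepsilon(u)=\frac{2}{3}$ if $u=2$ and $\varepsilon(u)=\frac{5}{6}$ otherwise, one checks in each case that $\varepsilon(u)$ does not exceed the lct identified above (for the $T_{15}^\prime$-summand, $\frac{2}{3}$ when $u=2$ and $1$ when $u\ne 2$; for the $\Omega^\prime$-summand, $\frac{5}{6}$). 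Thus both summands form log canonical pairs at $P$, and hence so does their convex combination. I foresee no real obstacle: the substantive work already sits in Lemmas~\ref{lemma:T-15-prime} and \ref{lemma:lct-C4}, and the only point of care is that for $u=2$ the coefficient $\frac{2}{3}$ is forced by the $\mathbf{D}_4$ singularity of $T_{15}^\prime$ along $\mathcal{C}_4$, rather than by Lemma~\ref{lemma:lct-C4} (which by itself would give the sharper $\frac{5}{6}$).
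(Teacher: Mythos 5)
Your proposal is correct and is essentially the paper's own argument: Lemma~\ref{lemma:T-15-prime} gives that $(V_u,T_{15}^{\prime})$ (resp. $(V_u,\frac{2}{3}T_{15}^{\prime})$ for $u=2$) is log canonical at a general point of $\mathcal{C}_4$, and Remark~\ref{remark:convexity} reduces to the case where $\mathrm{Supp}(D)$ does not contain $T_{15}^{\prime}$, which is exactly Lemma~\ref{lemma:lct-C4}. Your explicit decomposition $D=\alpha T_{15}^{\prime}+\Omega$ and the linearity of discrepancies is just a spelled-out form of that convexity reduction, so the two proofs coincide.
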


\begin{proof}
If $u=2$, then $(V_u,\frac{2}{3}T_{15}^{\prime})$ is log canonical at a general point of $\mathcal{C}_4$ by Lemma~\ref{lemma:T-15-prime}.
Similarly, if $u\ne 2$, then the pair $(V_u,T_{15}^{\prime})$ is log canonical at a general point of the curve~$\mathcal{C}_4$.
Thus, by Remark~\ref{remark:convexity}, we may assume that $\mathrm{Supp}(D)$ does not contain the surface $T_{15}^{\prime}$.
Now the assertion follows from Lemma~\ref{lemma:lct-C4}.
\end{proof}

Combining Corollaries~\ref{corollary:final} and \ref{corollary:lct-C4},
we obtain the assertion of Theorem~\ref{theorem:main}.
Indeed, let $D$ be an effective $\mathbb{Q}$-divisor on the threefold $V_u$ such that
$D\sim_{\mathbb{Q}}-K_{V_u}$.
As we already mentioned, we have to show that the log  pair $(V_u, \varepsilon(u) D)$ is log canonical.
But the log pair~\mbox{$(V_u, \varepsilon(u) D)$} is log canonical at a general point of the curve $\mathcal{C}_4$ by Corollary~\ref{corollary:lct-C4},
so that it is log canonical everywhere by Corollary~\ref{corollary:final}.

\end{document}